\setlist{noitemsep, topsep=0cm}
\tikzset{cross/.style={cross out, draw=black, minimum size=2*(#1-\pgflinewidth), inner sep=0pt, outer sep=0pt},cross/.default={1pt}}
\newtheorem{theorem}{Theorem}[section]
\newtheorem{proposition}[theorem]{Proposition}
\newtheorem{lemma}[theorem]{Lemma}
\newtheorem{corollary}[theorem]{Corollary}
\newtheorem{definition}[theorem]{Definition}
\theoremstyle{remark}
\newtheorem{remark}[theorem]{Remark}
\newtheorem{assumption}[theorem]{Assumption}
\theoremstyle{claim}
\theoremstyle{definition}
\newcommand{\norm}[2]{\left\lVert#1\right\rVert_{#2}}
\newcommand{\abs}[1]{\left\lvert #1 \right\rvert}
\newcommand{\pmat}[3]{\begin{pmatrix} #1 & #2 &\cdots & #3 \end{pmatrix}}
\newcommand{\dict}[1]{d_{#1}}
\newcommand{\inprod}[2]{\left\langle#1 \; , \; #2\right\rangle}
\newcommand{\inpnorm}[1]{\norm{#1}{}}
\newcommand{\dualnorm}[1]{\norm{#1}{\linmap}'}
\newcommand{\dimension}{n}
\newcommand{\indicator}{\mathds{1}}
\newcommand{\summ}[2]{\sum_{#1}^{#2}}
\DeclareSymbolFont{symbolsC}{U}{pxsyc}{m}{n}
\DeclareMathOperator{\relinterior}{relint}
\DeclareMathOperator{\image}{image}
\DeclareMathOperator{\convhull}{conv}
\DeclareMathOperator{\rank}{rank}
\DeclareMathOperator{\EE}{\mathsf{E}}
\DeclareMathOperator{\PP}{\mathsf{P}}
\DeclareMathOperator*{\minimize}{minimize}
\DeclareMathOperator*{\sbjto}{subject \; to}
\DeclareMathOperator*{\argmin}{argmin}
\DeclareMathOperator*{\argmax}{argmax}
\DeclarePairedDelimiterX\set[1]\lbrace\rbrace{#1}
\newcommand{\define}{\coloneqq}
\newcommand{\R}[1]{\mathbb{R}^{#1}}
\newcommand{\opt}{^\ast}
\newcommand{\transp}{^\top}
\newcommand{\dsize}{K}
\newcommand{\dictionary}{D}
\newcommand{\unitdictionaryset}{\mathcal{\dictionary}}
\newcommand{\nbhood}[2]{B(#1, #2)}
\newcommand{\closednbhood}[2]{B[#1, #2]}
\renewcommand{\geq}{\geqslant}
\renewcommand{\leq}{\leqslant}
\newcommand{\hilbert}{\mathbb{H}_{\dimension}}
\newcommand{\rv}{X}
\newcommand{\repvec}{f}
\newcommand{\linmap}{\phi}
\newcommand{\error}{\epsilon}
\newcommand{\regulizer}{\delta}
\newcommand{\scaling}{r}
\newcommand{\atomset}[1]{S_{\regulizer}(\linmap, #1)}
\newcommand{\atomball}{S_{\regulizer}(\linmap, 1)}
\newcommand{\atomscaled}{S_{\regulizer}(\linmap, \samplevec, \error)}
\newcommand{\horizon}{T}
\newcommand{\data}{[\rv : \horizon]}
\newcommand{\sample}[1]{x_{#1}}
\newcommand{\samplevec}{x}
\newcommand{\dualvar}{\eta}
\newcommand{\hvar}{h}
\newcommand{\separator}{\lambda}
\newcommand{\separatorset}{\Lambda_{\regulizer} (\linmap, \samplevec, \error)}
\newcommand{\Depsdelta}{(\linmap, \error, \regulizer)}
\newcommand{\encodermap}[4]{F_{#4} (#1, #2, #3)}
\newcommand{\codes}{\encodermap{\linmap}{\sample}{\error}{\regulizer}}
\newcommand{\encodedcost}[4]{C_{#4} (#1, #2, #3)}
\newcommand{\samplecost}{\encodedcost{\linmap}{\sample}{\error}{\regulizer}}
\newcommand{\cost}{c}
\newcommand{\costatomset}{V_{\cost}}
\newcommand{\horder}{p}
\title[Novel min-max reformulations of Linear Inverse Problems]{Novel min-max reformulations of Linear Inverse Problems}
\author[M. Rayyan Sheriff and D. Chatterjee]{Mohammed Rayyan Sheriff, Debasish Chatterjee}
\address{Systems \& Control Engineering\\ Indian Institute of Technology Bombay\\ Powai, Mumbai 400076\\ India.\\
\indent \url{http://www.sc.iitb.ac.in/~mohammedrayyan}\\
\indent \url{http://www.sc.iitb.ac.in/~chatterjee}
}
\thanks{Emails: \texttt{mohammedrayyan@sc.iitb.ac.in, dchatter@iitb.ac.in}}
\begin{document}
\maketitle 

\begin{abstract}
In this article we dwell into the class of so called ill-posed Linear Inverse Problems (LIP) which simply refers to the task of recovering the entire signal from its relatively few random linear measurements. Such problems arise in variety of settings with applications ranging from medical image processing, recommender systems, etc. We propose a slightly generalised version of the error constrained linear inverse problem and obtain a novel and equivalent convex-concave min-max reformulation by providing an exposition to its convex geometry. Saddle points of the min-max problem are completely characterised in terms of a solution to the LIP, and vice versa. Applying simple saddle point seeking ascend-descent type algorithms to solve the min-max problems provides novel and simple algorithms to find a solution to the LIP. Moreover, reformulation of an LIP as the min-max problem provided in this article is crucial in developing methods to solve the dictionary learning problem with almost sure recovery constraints.
\end{abstract}

\noindent\textbf{Keywords}: Linear Inverse Problems, Fenchel Duality, min-max problems, Dictionary Learning.

\section{Introduction}
A Linear Inverse Problem (LIP) is, simply stated, the recovery of a signal from its linear measurements. Signals encountered in practise tend to be very high dimensional; for example, audio signals and images typically have ambient dimension ranging from a few thousands to millions. However, the number of linear measurements that are typically available to recover the entire signal from, are relatively few compared to their ambient dimension. This makes such an LIP ill-posed. Fortunately, high dimensional data of the present day and age often contain underlying low dimensional characteristics, which if taken into consideration, often suffice to overcome the ill-posedness of the problem.

One of the early instances that gave recognition to linear inverse problems is \emph{compressed sensing} \cite{donoho2006compressed, candes2008introduction, candes2006robust, candes2006stable}, where a given signal \( \repvec' \) is assumed to be sparse in some known basis. So, given the partial information of the signal in the form of a collection of linear measurements \( \samplevec = \linmap (\repvec') \), the objective is to recover the entire signal almost accurately. Since the given signal is known to be sparse, one would expect that the true signal can be recovered accurately by finding a sparsest solution to the under determined system of linear equations \( \samplevec = \linmap (\repvec) \) given by the linear measurements. However, finding sparsest solutions (i.e., having minimum \( \ell_0 \) ``norm'') to linear equations is NP hard and therefore, not practical in the intended applications due to the size of the data typically encountered there. Fortunately, it is now well established that under mild conditions, the simple convex heuristic of minimizing the \( \ell_1 \)-norm 
\begin{equation}
\begin{cases}
\begin{aligned}
& \minimize && \norm{\repvec}{1} \\
& \sbjto && \samplevec = \linmap (\repvec) ,
\end{aligned}
\end{cases}
\end{equation}
instead of the \( \ell_0 \)-penalty finds the sparsest solution almost always. Thus, the true signal can be recovered exactly by simply solving a convex optimization problem. Moreover, even if the linear measurements are noisy, recovery done via minimizing the \( \ell_1 \)-penalty is reasonably accurate.

Similar to compressed sensing is the problem of low rank matrix recovery or completion \cite{candes2009exact, recht2010guaranteed, chandrasekaran2012convex}, where the objective is to reconstruct an entire matrix \( M' \) from only a few of its entries \( [M']_{ij} \) for \( (i,j) \in I \), where the cardinality of \( I \) is ``small'' compared to the size of the matrix \( M' \). Matrix recovery o completion problems arise regularly in recommender systems, and the Netflix challenge case in point. Since the unknown matrix is known to be of low rank, one expects that the true matrix is the solution to the rank minimization problem:
\[
\begin{cases}
\begin{aligned}
& \minimize && \rank (M) \\
& \sbjto && [M']_{ij} = [M]_{ij} \quad \text{for } (i,j) \in I .
\end{aligned}
\end{cases}
\]
However, minimizing the rank exactly, is extremely hard and impractical for most applications. Analogous to the \( \ell_1 \)-minimization, it is now well established \cite{recht2010guaranteed} that under mild conditions, the simple convex heuristic of minimizing the matrix nuclear norm \( \norm{\cdot}{*} \), recovers the true low rank matrix.
\begin{equation}
\begin{cases}
\begin{aligned}
& \minimize && \norm{M'}{*} \\
& \sbjto && [M']_{ij} = [M]_{ij} \quad \text{for } (i,j) \in I .
\end{aligned}
\end{cases}
\end{equation}

Often, signals that are encountered in practice can be written as a linear combination of relatively few elements from some atomic set \( \mathcal{A} \) which depends on the low dimensional characteristics present in the signal. For instance, in compressed sensing, since the signal is assumed to be sparse, the atomic set \( \mathcal{A} \) is considered to be the standard Euclidean basis of appropriate dimension. In the matrix recovery problem, since the unknown matrix is assumed to be of low rank, it can be written as the sum of a few rank-\( 1 \) matrices, and thus the atomic set \( \mathcal{A} \) is the set of all rank-\( 1 \) matrices of appropriate dimensions. So, given such a signal with the corresponding atomic set \( \mathcal{A} \), an LIP attempts to find a linear combination of few elements from the atomic set \( \mathcal{A} \) that agree with the given linear measurements of the signal. However, as evident in the compressed sensing and matrix recovery problems, finding such a linear combination by simply searching the atomic set is impractical.

It is to be observed that the \( \ell_1 \) and the nuclear norms are the \emph{guage} functions corresponding to the convex hull of the standard Euclidean basis (atomic set in compressed sensing) and the set of rank-\( 1 \) matrices (atomic set in matrix recovery problem) respectively. By minimizing such convex functions subject to the linear measurement constraints, guarantees have been obtained for fruitful signal recovery in compressed sensing and matrix recovery problems. Motivated by this observation, it was established in \cite{chandrasekaran2012convex} that for a generic LIP with a generic atomic set \( \mathcal{A} \), the analogous convex heuristic of minimizing the guage function corresponding to the set \( \convhull (\mathcal{A}) \) provides exact recovery under mild conditions. Thus, given linear measurements \( \samplevec = \linmap (\repvec) \) of a signal \( \repvec \), an LIP seeks to solve
\begin{equation}
\label{eq:LIP}
\begin{cases}
\begin{aligned}
& \minimize_{\repvec'} && \cost (\repvec') \\
& \sbjto && \samplevec = \linmap (\repvec') ,
\end{aligned}
\end{cases}
\end{equation}
where \( \cost \) is a positively homogenous convex cost function such that \( \convhull (\mathcal{A}) = \{ \repvec : \cost (\repvec) \leq 1 \} \). If the observed linear measurements are noisy, i.e., \( \samplevec = \linmap (\repvec') + \xi \), for some measurement noise \( \xi \). We solve
\begin{equation}
\label{eq:robust-LIP}
\begin{cases}
\begin{aligned}
& \minimize_{\repvec'} && \cost (\repvec') \\
& \sbjto && \inpnorm{\samplevec - \linmap (\repvec')} \leq \error ,
\end{aligned}
\end{cases}
\end{equation}
where \( \error \geq 0 \) is chosen based on the statistical properties of the measurement noise \( \xi \). If \( \repvec \) is a linear combination of only a ``few'' elements of \( \mathcal{A} \), the true signal can be recovered from only the linear measurements by solving the LIP \eqref{eq:robust-LIP}. A great body of literature \cite{donoho2006compressed, candes2008introduction, candes2006robust, candes2006stable, candes2009exact, recht2010guaranteed, chandrasekaran2012convex} exists on linear inverse problems focusing primarily towards providing quantitative analysis of the number of measurements required, and the constraints on the type of measurements suited for a given atomic set in order to guarantee fruitful recovery. However, our objective in studying the linear inverse problems is not directed towards this cause.

The main motivation for our work in this article comes from the related problem of \emph{Dictionary Learning}, which is another well known machine learning problem. The objective in dictionary learning is to find a standard database of vectors called the \emph{dictionary} such that samples of the given data \( (\samplevec_t)_t \) can be expressed as linear combinations of the dictionary vectors with desirable features, an important one being sparsity. Due to the many benefits of sparse representation in applications such as compression, robustness, clustering etc., there is an ever increasing demand to learn good dictionaries that offer maximally sparse but also reasonably accurate representation of the data.  A brief overview on the relevance of the dictionary learning problem and methods used to learn a `good' dictionary are given in \cite{tosic2011dictionary}.

To this end, let \( \dictionary = \pmat{\dict{1}}{\dict{2}}{\dict{\dsize}} \) denote a dictionary of \( \dsize \) vectors, where \( \dsize \) is some positive integer. Let \( \repvec_t \) denote the representation of sample \( \samplevec_t \) for every \( t \in 1,2,\ldots, T \). Then the dictionary learning problem that we aim to solve is
\begin{equation}
\label{eq:DL-fixed-error}
\begin{cases}
\begin{aligned}
		& \minimize_{ (\repvec_t)_t, \; \dictionary } && \frac{1}{T} \sum\limits_{t = 1}^T \norm{\repvec_t}{1} \\
		& \sbjto				&& 
		\begin{cases}
		    \dictionary \in \unitdictionaryset , \\
			\norm{\samplevec_t - \dictionary \repvec_t }{2} \leq \error_t \quad \text{for every \( t = 1, 2, \ldots, T \)},
		\end{cases}
\end{aligned}
\end{cases}
\end{equation}
where \( \unitdictionaryset \) is some convex subset of \( \R{\dimension \times \dsize} \) and \( (\error_t)_t \) is a given sequence of non-negative real numbers. In image processing applications like denoising etc., \( \error_t \) corresponds to the bounds on the additive noise in the noisy data. It is to be noted that for a fixed dictionary \( \dictionary \), the optimization over \( (\repvec_t)_t \) simply consists of solving the LIP \eqref{eq:robust-LIP} for each \( t \). Minimization of the \( \ell_1 \) penalty is to enforces sparsity in the representation vectors \( (\repvec_t)_t \). For different applications, the dictionary can be learned to optimize a task-specific but otherwise generic cost function \( \cost (\cdot) \) instead of the \( \ell_1 \)-norm. 

Conventionally, the dictionary learning problem is addressed by solving the optimization problem
\begin{equation}
	\label{eq:DL-l1-regularized}
	\minimize_{(\repvec_t)_t, \; \dictionary \; \in \; \unitdictionaryset } \quad \frac{1}{T} \sum\limits_{t = 1}^T  \Big( \norm{ f_t }{1} \; + \; \gamma \norm{x_t - \dictionary f_t}{2}^2  \Big)  
\end{equation}
where \( \gamma > 0 \) is the regularization parameter. It should be noted that the cost function in \eqref{eq:DL-l1-regularized} is a weighted sum of the sparsity inducing \( \ell_1 \)-penalty \( \norm{\repvec_t}{1} \) and the error term \( \norm{x_t - \dictionary f_t}{2}^2 \). The regularization parameter \( \gamma \) influences the tradeoff between the level of sparsity and the error, and for a given value of \( \gamma \), this tradeoff is specific to a given distribution or data set. However, the precise relation between the value of regularization parameter \( \gamma \) and the tradeoff is not straightforward. Thus, a priori one does not know which value of the regularization parameter \( \lambda \) to pick for a given distribution or data set; it is a tuning parameter that needs to be learned from the data. 

The dictionary learning problem \eqref{eq:DL-fixed-error} differs from the mainstream one \eqref{eq:DL-l1-regularized} in that it imposes constraint on every sample to be reconstruction within limits. Such a formulation arises naturally in many image processing applications like compressed sensing \cite{donoho2006compressed, candes2008introduction}, inpainting, denoising problems in image processing \cite{elad2006image} etc., where, good estimates of \( (\error_t)_t \) to be used in \eqref{eq:DL-fixed-error} are known a priori. In contrast, if we were to learn the dictionary for the same applications but by solving \eqref{eq:DL-l1-regularized} instead, the appropriate value of the regularization parameter to be used is not known and needs to be learnt from other techniques like cross validation, which poses additional computational challenges. Furthermore, with a single parameter the problem formulation \eqref{eq:DL-l1-regularized} does not provide the level of customisability that is available in \eqref{eq:DL-fixed-error}. Therefore, learning dictionaries by solving \eqref{eq:DL-fixed-error} is more appealing in situations where good estimates of \( (\error_t)_t \) to be used are known beforehand or in situations where the user has the liberty of specifying it.

Most of the existing techniques \cite{mairal2010online, aharon2006k} that learn a dictionary by solving \eqref{eq:DL-l1-regularized}, do so by alternating the minimization over the variables \( (\repvec_t)_t \) and \( \dictionary \) keeping the other one fixed, i.e., alternating between the problems
\begin{equation}
\begin{cases}
\begin{aligned}
	\label{eq:Bach-alternating-problems}
		 & \minimize_{(\repvec_t)_t} && \frac{1}{T} \sum\limits_{t = 1}^T  \Big( \norm{ f_t }{1} \; + \; \gamma \norm{x_t - \dictionary f_t}{2}^2 \Big) \; , \text{ and} \\
         & \minimize_{\dictionary \; \in \; \unitdictionaryset} && \frac{1}{T} \sum\limits_{t = 1}^T \norm{x_t - \dictionary f_t}{2}^2  .
\end{aligned}
\end{cases}
\end{equation}
We note that, individually both the problems in \eqref{eq:Bach-alternating-problems} are convex, in particular, the optimization over the dictionaries is a QP. 

In contrast to \eqref{eq:DL-l1-regularized}, an alternating minimization technique like \eqref{eq:Bach-alternating-problems} is completely ineffective in order to solve \eqref{eq:DL-fixed-error}. Indeed, once the variables \( (\repvec_t)_t \) are fixed, there is no evident way to update the dictionary variable such that the resulting dictionary minimizes the cost. This makes the dictionary learning problem \eqref{eq:DL-fixed-error} ill-posed and more challenging than the conventional regularised formulation. We observe that the objective function in \eqref{eq:DL-l1-regularized} depends directly on the dictionary variable \( \dictionary \), whereas, it affects the objective function of \eqref{eq:DL-fixed-error} indirectly by only changing the set of feasible representations. This is the key reason which makes updating the dictionary in \eqref{eq:DL-fixed-error} such a difficult task.

In this article, we propose a slight modification to the original linear inverse problem \eqref{eq:LIP}, involving an additional parameter, which adds regularity to the non-regularised problem \eqref{eq:LIP} when positive. Convex duality of the proposed problem is studied by exposing the underlying geometry. We propose convex-concave min-max problems and establish their equivalence to the modified LIP. Mathematical guarantees based on Fenchel duality relating the optimal solution of the LIP with the saddle points of the min-max problems are provided. This equivalent reformulation of the LIP as a min-max problem precisely addresses the issue of ill-posedness in the dictionary learning problem \eqref{eq:DL-fixed-error}. In the dictionary learning problem \eqref{eq:DL-fixed-error}, replacing the optimization over variables \( (\repvec_t)_t \), which is a collection of linear inverse problems, with their respective min-max problems, gives us another min-max problem equivalent to \eqref{eq:DL-fixed-error}, but well-posed. Making use of this reformulation, we have provided novel dictionary learning algorithm in \cite{sheriff2019dictionary} to solve \eqref{eq:DL-fixed-error}. To the best of our knowledge, these are the first set of results that effectively solve the dictionary learning problem for situations where solving the formulation \eqref{eq:DL-fixed-error} is natural.

In addition to the importance of the reformulations of the LIP in dictionary learning, the min-max forms also provide a new approach to solve an LIP, which is of independent interest. Gradient based algorithms to compute saddle points of the min-max problem give rise to simple and easy to implement algorithms to solve an LIP. Due to the relevance of large dimensional linear inverse problems, simple to implement yet reasonably fast and efficient algorithms to solve them are always desirable. One of the objectives of this article is to take a step towards this direction. Furthermore, the theory and mathematical guarantees provided in this article are fairly generic and easily carry forward to the extended class of optimization problems which optimize a convex gauge function over convex sets, this includes many relevant problems like projections onto convex sets, LASSO etc., thereby yielding new algorithms for all such problems at once.

The article unfolds as follows: In Section \ref{section:Problem-statement-and-main-result} we formally introduce the LIP in a more generalised form and provide the main results including the equivalent convex-concave min-max reformulations, algorithms to solve them and applications to some specific problems of interest. In Section \ref{section:convex-geometry}, we expose the duality by studying the underlying convex geometry of the LIP and provide proofs for all the results. We employ standard notations, and specific ones are explained as they appear.

\section{Formal problem statement and main results}
\label{section:Problem-statement-and-main-result}
Let \( \dimension \) be a positive integer, \( \hilbert \) be an \( \dimension \)-dimensional Hilbert space equipped with an innerproduct \( \inprod{\cdot}{\cdot} \) and its associated norm \( \inpnorm{\cdot} \). For every \( z \in \hilbert \) and \( r > 0 \), let \( \nbhood{z}{r} \define \{ y \in \hilbert : \inpnorm{\samplevec - y} < \error \} \) and let \( \closednbhood{z}{r} \define \{ y \in \hilbert : \inpnorm{\samplevec - y} \leq \error \} \). Let \( \cost : \R{\dsize} \longrightarrow [0, +\infty[ \) be a cost function such that it satisfies the following assumption.
\begin{assumption}
\label{assumption:cost-function}
    The cost function \( \cost : \R{\dsize} \longrightarrow [0, +\infty[ \) has the following properties
\begin{itemize}
        \item \emph{Positive Homogeneity} : There exists a positive real number \( \horder \) such that for every \( \alpha \geq 0 \) and \( \repvec \in \R{\dsize} \), we have \( \cost (\alpha \repvec ) = \alpha^{\horder} \cost (\repvec) \).
        
        \item \emph{Pseudo-Convexity} : The unit sublevel set \( \costatomset \define \{ \repvec \in \R{\dsize} : \cost (\repvec) \leq 1 \} \) is convex.
        
        \item \emph{Inf-Compactness} : The unit sublevel set \( \costatomset \) is compact.
\end{itemize}
\end{assumption}

Let \( \samplevec \in \hilbert \), non-negative real numbers \( \error \) and \( \regulizer \), and the linear map \( \linmap : \hilbert \longrightarrow \R{\dsize} \) be given. We consider the following general formulation of the linear inverse problem
\begin{equation} 
	\label{eq:coding-problem}
	\begin{cases}
	  \begin{aligned}
		& \minimize_{( \mathsf{c},\; \repvec ) \; \in \; \R{} \times \R{\dsize}}  && \quad \mathsf{c}^{\horder} \\
		& \sbjto							  &&
		\begin{cases}
			\big( \cost (\repvec) \big)^{1/\horder} \leq \mathsf{c} \\
            \inpnorm{ \samplevec - \linmap (\repvec) } \leq \error + \regulizer \mathsf{c} .
		\end{cases}
	\end{aligned}
	\end{cases}
\end{equation}
When \( \regulizer = 0 \), we see that the feasible collection of \( \repvec \) is independent from the variable \( \mathsf{c} \). As a consequence we see that for every feasible \( \repvec \in \R{\dsize} \), the minimization over the variable \( \mathsf{c} \) is achieved for \( \mathsf{c} = \cost (\repvec) \). Thus the linear inverse problem \eqref{eq:coding-problem} reduces to the following more familiar formulation.
\begin{equation} 
	\label{eq:coding-problem-absolute-error}
	\begin{cases}
	 \begin{aligned}
		& \minimize_{\repvec \; \in \; \R{\dsize}} && \cost ( \repvec ) \\
		& \sbjto && \inpnorm{ \samplevec - \linmap (\repvec) } \leq \error .
	\end{aligned}
	\end{cases}
\end{equation}
The non-negative real number \( \regulizer \) acts as a regularization parameter. If \( \regulizer > 0 \), by considering \( \mathsf{c} > \big( \inpnorm{\samplevec} / \regulizer \big)^{\horder} \) and \( \repvec = 0 \), we see that the linear inverse problem \eqref{eq:coding-problem} is always feasible. On the contrary, if \( \regulizer = 0 \), it is immediately apparent that \eqref{eq:coding-problem} is feasible if and only if \( \closednbhood{\samplevec}{\error} \cap \image (\linmap) \neq \emptyset \).

It might be surprising at first to see the rather unusual formulation \eqref{eq:coding-problem} of the linear inverse problem. Our formulation makes way for the possibility of \( \regulizer \) to take positive values also. BY considering a positive value for the regularization parameter \( \regulizer \), we obtain several advantages:
\begin{itemize}[leftmargin = *]
    \item A positive value of \( \regulizer \) amounts to having the effect of regularization in the problem. Thus, one can harvest all the advantages that come from regularization like numerical stability in algorithms, well conditioning etc.
    
    \item Whenever \( \regulizer > 0 \), the LIP \eqref{eq:coding-problem} is always strictly feasible. This is a crucial feature in the initial stages of dictionary learning, in particular, when the data lies in a subspace of lower dimension \( m \), such that \( m, \dsize \ll \dimension \), where \( \dsize \) is the number of dictionary vectors.
    
    \item Having a positive value of \( \regulizer \) eliminates the pathological cases that arise in dictionary learning and provides guarantees for convergence of dictionary learning algorithms. Moreover, it leads to useful fixed point characterization of the optimal dictionary, which in turn lead to simple online dictionary update algorithms.
\end{itemize}

We observe that the mapping \( \repvec \longmapsto (\cost (\repvec))^{1/\horder} \) is an inf-compact, convex and positively homogeneous of order \( 1 \). Therefore, it is immediate that the constraints of the LIP \eqref{eq:coding-problem} are convex. Furthermore, the objective function is also convex whenever \( \horder \geq 1 \). Thus, it is apparent that the LIP \eqref{eq:coding-problem} is a convex problem when \( \horder \geq 1 \). When \( \horder < 1 \), we highlight that \( [0, +\infty[ \ni (\cdot) \longmapsto (\cdot)^{\horder} \) is an increasing function, and therefore, minimizing \( \mathsf{c}^{\horder} \) is equivalent to minimizing \( \mathsf{c} \). Thus, the LIP \eqref{eq:coding-problem} has an underlying convex problem (except that the objective function is a non-convex power).

We emphasise that whenever the optimization problem \eqref{eq:coding-problem} is feasible, the feasible set is closed and the cost function is continuous and \emph{coercive}.\footnote{Recall that a continuous function \( \cost \) defined over an unbounded set \( U \) is said to be coercive in the context of an optimization problem, if : \( \lim\limits_{\inpnorm{u} \to \infty } \cost (u) = +\infty \; (- \infty), \) in the context of minimization (maximization) of \( \cost \) and the limit is considered from within the set \( U \).
}
Therefore, from the Weierstrass theorem \cite[Theorem 4.16]{rudin1964principles} we conclude that whenever \eqref{eq:coding-problem} is feasible, it admits an optimal solution. To this end, let
\begin{equation}
\label{eq:definition-of-encoding-cost-and-codes}
\big( (\samplecost)^{\frac{1}{\horder}} , \codes \big) \define
\begin{cases}
	  \begin{aligned}
		& \argmin_{( \mathsf{c},\; \repvec ) \; \in \; \R{} \times \R{\dsize}}  && \quad \mathsf{c}^{\horder} \\
		& \sbjto							  &&
		\begin{cases}
			\big( \cost (\repvec) \big)^{1/\horder} \leq \mathsf{c} \\
            \inpnorm{ \samplevec - \linmap (\repvec) } \leq \error + \regulizer \mathsf{c} .
		\end{cases}
	\end{aligned}
	\end{cases}
\end{equation}
Note that \( \samplecost \) is also the optimal value achieved in \eqref{eq:coding-problem}. In view of this, we slightly abuse the definition \eqref{eq:definition-of-encoding-cost-and-codes}, and follow the convention that if \eqref{eq:coding-problem} is infeasible, then \( \samplecost \define +\infty \) and \( \codes \define \emptyset \).

The optimization problem \eqref{eq:coding-problem} depending on the parameters \( \samplevec, \linmap, \error , \regulizer \) could potentially have multiple solutions. However, in signal recovery from ill-posed linear inverse problems, if there are sufficiently many linear measurements, and of correct type, the LIP \eqref{eq:coding-problem} admits a unique solution and the set \( \codes \) is then a singleton containing the true signal to be recovered. In other situations like sparse encoding, the LIP \eqref{eq:coding-problem} could have multiple solutions, and if it does, we see that \( \codes \) is a convex set.

\begin{definition}
\label{def:representability}
Let \( \linmap : \R{\dsize} \longrightarrow \hilbert \) be a linear map, and let \( \error , \regulizer \geq 0 \). A vector \( \samplevec \in \hilbert \) is said to be \( \Depsdelta \)-feasible if \( \samplecost < +\infty \).
\end{definition}
\begin{remark} \rm{
We see that \( \samplevec \in \hilbert \) is \( \Depsdelta \)-feasible if and only if at least one of the following holds:
\begin{itemize}
    \item \( \regulizer > 0 \),
    \item \( \closednbhood{\samplevec}{\error} \cap \image(\linmap) \neq \emptyset \).
\end{itemize}
} \end{remark}

\begin{definition}
\label{definition:atomball}
For the linear map \( \linmap : \hilbert \longrightarrow \R{\dsize} \), non-negative real number \( \regulizer \) and the cost function \( \cost : \hilbert \longrightarrow [0, +\infty [ \) satisfying Assumption \ref{assumption:cost-function}, let us define
\begin{equation}
\label{eq:atomic-ball-definition}
\atomball \define \{ z \in \hilbert : \text{there exists \( \repvec \in \costatomset  \) satisfying \( \inpnorm{z - \linmap (\repvec)} \leq \regulizer \) } \} ,
\end{equation}
where \( \costatomset \) is the unit sub level set of the cost function \( \cost \).
\end{definition}
By denoting \( S' \coloneqq \{ \linmap (\repvec) : \repvec \in \costatomset \} \), it is clear that \( S' \) is the image of the compact and convex set \( \costatomset \) under the linear map \( \linmap \), and is therefore, compact and convex.\footnote{Considering, for instance, \( \cost (\cdot) = \norm{\cdot}{1} \) and the linear map \( \linmap \) given by the matrix \( \dictionary = \pmat{\dict{1}}{\dict{2}}{\dict{\dsize}} \in \R{\dimension \times \dsize} \), we see that \( \costatomset \) is the \( \ell_1 \)-closed ball in \( \R{\dsize} \) and \( S' = \convhull (\pm \dict{i})_{i = 1}^{\dsize} \).} Moreover, since the set \( \atomball \) is the image of the linear map \( S' \times \closednbhood{0}{\regulizer} \ni (z', y) \longmapsto z' + y \), \( \atomball \) is also compact and convex. Furthermore, for every \( \scaling \geq 0 \) the set \( \atomset{\scaling} \define \scaling \cdot \atomball \), obtained by linearly scaling \( \atomball \) by an amount of \( \scaling \), is also compact and convex.

The \emph{guage function} \( \norm{\cdot}{\linmap} : \hilbert \longrightarrow [0 , +\infty [ \) corresponding to the set \( \atomball \)is given by
\begin{equation}
\label{eq:guage-function-definition}
\norm{z}{\linmap} \define \min\big\{\scaling \geq 0 : z \in \atomset{\scaling} \big\} .
\end{equation}
When \( \regulizer > 0 \), we know that \( \atomball \) has non-empty interior, therefore, \( \norm{z}{\linmap} < +\infty \) for every \( z \in \hilbert \). Similarly when \( \regulizer = 0 \), \( \norm{z}{\linmap} < +\infty \) if and only if \( z \in \image (\linmap) \). Moreover, due to the set \( \atomset{\scaling} \) being compact for every \( \scaling \geq 0 \), the minimization in the definition of the guage function is always achieved. In other words, for every \( z \in \hilbert \) such that \( \norm{z}{\linmap} < +\infty \), we have \( z \in \atomset{\norm{z}{\linmap}} \).

The underlying convexity of the linear inverse problem \eqref{eq:coding-problem} gives rise to an interplay of the convex bodies \( \closednbhood{\samplevec}{\error} \) and \( \atomball \). As a result, we obtain the relation between the optimal cost \( \samplecost \), the guage function \( \norm{\cdot}{\linmap} \) and the set \( \closednbhood{\samplevec}{\error} \).
\begin{lemma}
\label{lemma:coding-problem-guage-function-equivalent}
Consider the LIP \eqref{eq:coding-problem} for the linear map \( \linmap \), cost function \( \cost \), non-negative real numbers \( \error, \regulizer \) and \( \samplevec \in \hilbert \). The optimal cost \( \samplecost \) of the LIP \eqref{eq:coding-problem} and the guage function \( \norm{\cdot}{\linmap} \) satisfy
\begin{equation}
\label{eq:coding-problem-guage-function-equivalent}
(\samplecost)^{1/\horder} = \min_{y \; \in \;  \closednbhood{\samplevec}{\error}} \; \norm{y}{\linmap} .
\end{equation}
\end{lemma}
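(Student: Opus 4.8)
The plan is to prove the two inequalities \( (\samplecost)^{1/\horder} \ge \min_{y \in \closednbhood{\samplevec}{\error}} \norm{y}{\linmap} \) and \( (\samplecost)^{1/\horder} \le \min_{y \in \closednbhood{\samplevec}{\error}} \norm{y}{\linmap} \) separately, after disposing of the infeasible case. If \( \samplevec \) is not \( \Depsdelta \)-feasible, then by the remark following Definition~\ref{def:representability} we have \( \regulizer = 0 \) and \( \closednbhood{\samplevec}{\error} \cap \image(\linmap) = \emptyset \); since \( \atomset{\scaling} \subseteq \image(\linmap) \) for every \( \scaling \ge 0 \) when \( \regulizer = 0 \), the gauge \( \norm{\cdot}{\linmap} \) equals \( +\infty \) on all of \( \closednbhood{\samplevec}{\error} \), so both sides of \eqref{eq:coding-problem-guage-function-equivalent} are \( +\infty \). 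From now on I assume \( \samplevec \) is \( \Depsdelta \)-feasible; then \eqref{eq:coding-problem} attains its optimum at some \( (\mathsf{c}^\ast,\repvec^\ast) \) with \( \mathsf{c}^\ast=(\samplecost)^{1/\horder} \), and the outer minimum on the right is attained because \( \closednbhood{\samplevec}{\error} \) is compact and \( \norm{\cdot}{\linmap} \) is lower semicontinuous.

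For the inequality \( \ge \), I would use the elementary fact that two closed balls in \( \hilbert \) intersect precisely when the distance between their centres does not exceed the sum of their radii. The measurement constraint in \eqref{eq:coding-problem} reads \( \inpnorm{\samplevec-\linmap(\repvec^\ast)}\le\error+\regulizer\mathsf{c}^\ast \), so \( \closednbhood{\samplevec}{\error} \) and \( \closednbhood{\linmap(\repvec^\ast)}{\regulizer\mathsf{c}^\ast} \) meet; choose \( y \) in the intersection. When \( \mathsf{c}^\ast>0 \), positive homogeneity turns the first constraint into \( \cost(\repvec^\ast/\mathsf{c}^\ast)=\cost(\repvec^\ast)/(\mathsf{c}^\ast)^{\horder}\le 1 \), i.e. \( \repvec^\ast/\mathsf{c}^\ast\in\costatomset \); since also \( \inpnorm{y/\mathsf{c}^\ast-\linmap(\repvec^\ast/\mathsf{c}^\ast)}\le\regulizer \), the definition of \( \atomball \) gives \( y/\mathsf{c}^\ast\in\atomball \), hence \( y\in\atomset{\mathsf{c}^\ast} \) and \( \norm{y}{\linmap}\le\mathsf{c}^\ast \). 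Thus \( \min_{y'\in\closednbhood{\samplevec}{\error}}\norm{y'}{\linmap}\le\mathsf{c}^\ast=(\samplecost)^{1/\horder} \). The degenerate subcase \( \mathsf{c}^\ast=0 \) is handled on its own: inf-compactness together with positive homogeneity forces the only zero of \( \cost \) to be the origin, so \( \mathsf{c}^\ast=0 \) forces \( \repvec^\ast=0 \), whence \( \inpnorm{\samplevec}\le\error \) and the right-hand side is \( \le\norm{0}{\linmap}=0 \).

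For the inequality \( \le \), let \( y^\ast\in\closednbhood{\samplevec}{\error} \) attain \( \gamma\define\min_{y}\norm{y}{\linmap} \). Because the minimisation defining the gauge is achieved (as recorded just before the lemma), \( y^\ast\in\atomset{\gamma}=\gamma\cdot\atomball \), so there is \( \repvec\in\costatomset \) with \( \inpnorm{y^\ast-\linmap(\gamma\repvec)}\le\gamma\regulizer \). Put \( \tilde\repvec\define\gamma\repvec \); positive homogeneity gives \( (\cost(\tilde\repvec))^{1/\horder}=\gamma\,(\cost(\repvec))^{1/\horder}\le\gamma \), and the triangle inequality gives \( \inpnorm{\samplevec-\linmap(\tilde\repvec)}\le\inpnorm{\samplevec-y^\ast}+\inpnorm{y^\ast-\linmap(\tilde\repvec)}\le\error+\regulizer\gamma \). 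Hence \( (\gamma,\tilde\repvec) \) is feasible for \eqref{eq:coding-problem}, so \( \samplecost\le\gamma^{\horder} \), that is \( (\samplecost)^{1/\horder}\le\gamma \); this step goes through verbatim when \( \gamma=0 \). Combining the two inequalities yields \eqref{eq:coding-problem-guage-function-equivalent}.

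I expect the only genuine content beyond bookkeeping to be the balls-intersect observation — this is exactly where the particular form \( \error+\regulizer\mathsf{c} \) of the relaxed measurement constraint is used, and it is what makes the thickened image \( \atomball \) the right geometric object — together with keeping track of the degenerate cases \( \mathsf{c}^\ast=0 \) and \( \gamma=0 \) and the attainment of the two minima.
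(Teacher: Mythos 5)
Your proof is correct, and the geometry underlying it is the same as in the paper, but the route is organized differently. The paper proves this lemma by citing two earlier results: Lemma \ref{lemma:coding-problem-reformulation-1}, which characterizes \( (\samplecost)^{1/\horder} \) as the least \( \scaling \) for which \( \atomset{\scaling} \cap \closednbhood{\samplevec}{\error} \neq \emptyset \), and Lemma \ref{lemma:uniqueness-of-intersection}, which supplies the intersection point \( y\opt \in \closednbhood{\samplevec}{\error} \cap \atomscaled \) witnessing \( \min_y \norm{y}{\linmap} \leq (\samplecost)^{1/\horder} \); the converse inequality then follows by translating ``\( \norm{y}{\linmap} \) finite'' into ``\( \atomset{\norm{y}{\linmap}} \) meets \( \closednbhood{\samplevec}{\error} \)'' and invoking Lemma \ref{lemma:coding-problem-reformulation-1} again. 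You instead inline the relevant content: your balls-intersect observation is exactly the convex-combination point \( y = \tfrac{\error \linmap(\repvec) + \regulizer \scaling \samplevec}{\error + \regulizer \scaling} \) constructed in the proof of Lemma \ref{lemma:coding-problem-reformulation-1}, and your \( \leq \) direction is the same feasibility argument (scale a gauge witness by \( \gamma \) and apply the triangle inequality) that appears in the first half of that lemma's proof. What your version buys is self-containment and economy: you never need the uniqueness of the intersection point (only non-emptiness), which the paper's citation of Lemma \ref{lemma:uniqueness-of-intersection} drags in unnecessarily. One small presentational remark: you assert up front that the minimum over \( \closednbhood{\samplevec}{\error} \) is attained by lower semicontinuity of the gauge, but this is not needed as a separate input --- your \( \leq \) argument in fact shows \( (\samplecost)^{1/\horder} \leq \norm{y}{\linmap} \) for \emph{every} \( y \in \closednbhood{\samplevec}{\error} \), and your \( \geq \) argument exhibits a specific \( y \) achieving \( \norm{y}{\linmap} \leq (\samplecost)^{1/\horder} \), so attainment of the minimum falls out of the two bounds together.
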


\subsection{Duality}
The guage function \( \norm{\cdot}{\linmap} \) gives rise to its corresponding dual function \( \dualnorm{\cdot} : \hilbert \longrightarrow [0 , +\infty[ \) defined by:
\begin{equation}
\label{eq:dual-norm-definition}
\dualnorm{\separator} \define \sup\limits_{\norm{z}{\linmap} \leq 1} \inprod{\separator}{z} \; = \sup\limits_{z \in \atomball} \; \inprod{\separator}{z} . 
\end{equation}
Let \( \separator , y \in \hilbert \), we recall that \( y \in \atomset{\norm{y}{\linmap}} \), and consequently, we have the following Holder like inequality
\[
\inprod{\separator}{y} \; \leq \sup_{z \in \atomset{\norm{y}{\linmap}}} \inprod{\separator}{z}\; = \; \norm{y}{\linmap} \sup_{z \in \atomball} \inprod{\separator}{z} \; = \; \norm{y}{\linmap} \dualnorm{\separator} .
\]
This gives rise to strong duality between the guage function \( \norm{\cdot}{\linmap} \) and its associated dual function \( \dualnorm{\cdot} \) in the following way
\begin{equation}
\label{eq:strong-duality-of-guage-function}
\norm{y}{\linmap} =
\begin{cases}
\begin{aligned}
& \sup_{\separator} && \inprod{\separator}{y} \\
& \sbjto && \dualnorm{\separator} \leq 1 .
\end{aligned}
\end{cases} 
\end{equation}
By replacing the guage function \( \norm{\cdot}{\linmap} \) in \eqref{eq:coding-problem-guage-function-equivalent} with its equivalent sup formulation provided in \eqref{eq:strong-duality-of-guage-function}, we obtain the Convex Dual of the LIP \eqref{eq:coding-problem}. First, we will define the set \( \separatorset \) which is the collection of optimal dual variables.
\begin{definition}
\label{def:optimal-separator-set}
Let the linear map \( \linmap \) and \( \error, \regulizer \geq 0 \) and a cost function \( \cost \) satisfying Assumption \ref{assumption:cost-function} be given. Then for every \( \samplevec \in \hilbert \) that is \( \Depsdelta \)-feasible, let \( \separatorset \subset \hilbert \) denote the collection of points \( \separator \in \hilbert \setminus \closednbhood{0}{\error} \) that satisfy the following two conditions simultaneously:
\begin{itemize}
     \item \( \dualnorm{\separator} = \ 1 \), and
    \item \( \inprod{\separator}{\samplevec} - \error \inpnorm{\separator} \ = \ (\samplecost)^{1/\horder} \).
\end{itemize}
\end{definition}

\begin{theorem}
\label{theorem:coding-problem-equivalent-sup-problem}
Let the linear map \( \linmap : \R{\dsize} \longrightarrow \hilbert \), real numbers \( \error, \regulizer \geq 0 \) and \( \samplevec \in \hilbert \) be given. Consider the linear inverse problem \eqref{eq:coding-problem} and its convex dual problem:
\begin{equation}
\label{eq:coding-problem-equivalent-sup-problem}
\begin{cases}
\begin{aligned}
& \sup_{\separator} && \inprod{\separator}{\samplevec} - \error \inpnorm{\separator}  \\
&\sbjto && \dualnorm{\separator} \leq 1 .
\end{aligned}
\end{cases}
\end{equation}
\begin{enumerate}[label = \rm{(\roman*)}, leftmargin=*]
\item Strong Duality: The supremum value in \eqref{eq:coding-problem-equivalent-sup-problem} is finite if and only if \( \samplevec \) is \( \Depsdelta \)-feasible, and is equal to the optimal cost \( ( \samplecost )^{\frac{1}{\horder}} \).

\item Existence and description of an optimal solution to \eqref{eq:coding-problem-equivalent-sup-problem}.
\begin{enumerate}[label = \rm{(\alph*)}, leftmargin=*]
\item Irrespective of the value of \( \regulizer \), for any \( \error \geq 0 \) if \( \inpnorm{\samplevec} \leq \error \), then \( \separator\opt = 0 \) is an optimal solution.

\item Whenever \( \inpnorm{\samplevec} > \error \), the optimization problem \eqref{eq:coding-problem-equivalent-sup-problem} admits an optimal solution if and only if the set \( \separatorset \) defined in \ref{def:optimal-separator-set} is non-empty and \( \separator\opt \) is a solution if and only if \( \separator\opt \in \separatorset \). As a result, the supremum is indeed a maximum and it is achieved at \( \separator\opt \).

\item Whenever \( \inpnorm{\samplevec} > \error \) and the set \( \separatorset \) is empty, the optimization problem \eqref{eq:coding-problem-equivalent-sup-problem} does not admit an optimal solution even though the value of the supremum is finite.
\end{enumerate}
\end{enumerate}
\end{theorem}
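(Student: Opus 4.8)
The plan is to build on Lemma~\ref{lemma:coding-problem-guage-function-equivalent} and on the strong duality \eqref{eq:strong-duality-of-guage-function} of the gauge $\norm{\cdot}{\linmap}$: one rewrites the optimal cost as a $\min$--$\sup$ over the two convex bodies and performs a single minimax exchange to recover \eqref{eq:coding-problem-equivalent-sup-problem}; the existence statements in (ii) then follow from inspecting the equality cases of the H\"older-type inequality that precedes \eqref{eq:strong-duality-of-guage-function}. Concretely, when $\samplevec$ is $\Depsdelta$-feasible, Lemma~\ref{lemma:coding-problem-guage-function-equivalent} gives $(\samplecost)^{1/\horder}=\min_{y\in\closednbhood{\samplevec}{\error}}\norm{y}{\linmap}$, and substituting the sup-representation \eqref{eq:strong-duality-of-guage-function} turns this into $\min_{y\in\closednbhood{\samplevec}{\error}}\sup_{\dualnorm{\separator}\le1}\inprod{\separator}{y}$. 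Since $\closednbhood{\samplevec}{\error}$ is convex and compact, $\{\separator:\dualnorm{\separator}\le1\}$ is convex, and $(\separator,y)\mapsto\inprod{\separator}{y}$ is bilinear (hence continuous and affine in each argument), Sion's minimax theorem permits swapping the $\min$ and the $\sup$; the inner minimization is explicit --- parametrising $y=\samplevec-u$ with $\inpnorm{u}\le\error$ and using Cauchy--Schwarz, $\min_{y\in\closednbhood{\samplevec}{\error}}\inprod{\separator}{y}=\inprod{\separator}{\samplevec}-\error\inpnorm{\separator}$ --- so the supremum in \eqref{eq:coding-problem-equivalent-sup-problem} equals the finite number $(\samplecost)^{1/\horder}$.

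If instead $\samplevec$ is not $\Depsdelta$-feasible, then necessarily $\regulizer=0$ and $\closednbhood{\samplevec}{\error}\cap\image(\linmap)=\emptyset$, i.e.\ the component $\samplevec_\perp$ of $\samplevec$ orthogonal to the subspace $\image(\linmap)$ has $\inpnorm{\samplevec_\perp}>\error$. Because $\regulizer=0$ forces $\atomball\subseteq\image(\linmap)$, we get $\dualnorm{\samplevec_\perp}=0$, so every $\alpha\samplevec_\perp$ ($\alpha\ge0$) is dual-feasible and the dual objective there equals $\alpha\inpnorm{\samplevec_\perp}\bigl(\inpnorm{\samplevec_\perp}-\error\bigr)\to+\infty$; hence the supremum is $+\infty$. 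Together with the previous paragraph this settles (i). Part (ii)(a) is then immediate: if $\inpnorm{\samplevec}\le\error$, Cauchy--Schwarz gives $\inprod{\separator}{\samplevec}-\error\inpnorm{\separator}\le\inpnorm{\separator}\bigl(\inpnorm{\samplevec}-\error\bigr)\le0$ for every dual-feasible $\separator$, with equality at $\separator=0$, so $\separator\opt=0$ is optimal.

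For (ii)(b)--(c), assume $\inpnorm{\samplevec}>\error$ and $\samplevec$ $\Depsdelta$-feasible, and put $v\opt:=(\samplecost)^{1/\horder}$. Since $\atomset{0}=\{0\}$ and $0\notin\closednbhood{\samplevec}{\error}$, no point of $\closednbhood{\samplevec}{\error}$ has zero gauge, so $v\opt>0$ by Lemma~\ref{lemma:coding-problem-guage-function-equivalent}. The implication ``$\separator\opt\in\separatorset\Rightarrow\separator\opt$ optimal'' is immediate, since the two defining properties of $\separatorset$ say exactly that $\separator\opt$ is dual-feasible with objective value $v\opt$. Conversely, let $\separator\opt$ attain the supremum; if $\dualnorm{\separator\opt}=0$ then scaling $\separator\opt$ sends the objective to $+\infty$ (as $v\opt>0$), and if $0<\dualnorm{\separator\opt}<1$ then $\separator\opt/\dualnorm{\separator\opt}$ is dual-feasible with objective $v\opt/\dualnorm{\separator\opt}>v\opt$ --- both impossible, so $\dualnorm{\separator\opt}=1$. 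Choosing, by the Weierstrass argument recalled before \eqref{eq:definition-of-encoding-cost-and-codes}, a primal optimizer $y\opt\in\closednbhood{\samplevec}{\error}$ with $\norm{y\opt}{\linmap}=v\opt$, one chains the H\"older-type inequality preceding \eqref{eq:strong-duality-of-guage-function} with Cauchy--Schwarz:
\[
v\opt=\inprod{\separator\opt}{\samplevec}-\error\inpnorm{\separator\opt}\le\inprod{\separator\opt}{\samplevec}-\inprod{\separator\opt}{\samplevec-y\opt}=\inprod{\separator\opt}{y\opt}\le\norm{y\opt}{\linmap}\dualnorm{\separator\opt}=v\opt,
\]
whence all inequalities are equalities: $\inpnorm{\samplevec-y\opt}=\error$, the vector $\samplevec-y\opt$ is a nonnegative multiple of $\separator\opt$, and $\inprod{\separator\opt}{y\opt}=\norm{y\opt}{\linmap}\dualnorm{\separator\opt}$. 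Feeding these complementary-slackness identities back into the definitions shows $\separator\opt$ meets the defining conditions of $\separatorset$, i.e.\ $\separator\opt\in\separatorset$. Consequently \eqref{eq:coding-problem-equivalent-sup-problem} is solvable iff $\separatorset\neq\emptyset$, which is (ii)(b), and (ii)(c) is its contrapositive combined with the finiteness from the first paragraph.

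The real work sits in the last paragraph: converting the equality cases above into the precise membership $\separator\opt\in\separatorset$ while treating the degenerate configurations carefully --- $\error=0$, directions with $\dualnorm{\separator}=0$, and especially $\samplevec$ lying on the boundary of $\Depsdelta$-feasibility (possible only when $\regulizer=0$, since a positive $\regulizer$ makes $\atomball$ have nonempty interior and hence $\{\separator:\dualnorm{\separator}\le1\}$ compact), which is exactly the regime where the supremum is finite but unattained. A secondary point is to double-check the hypotheses of the minimax theorem in the first paragraph in the case $\regulizer=0$, where $\{\separator:\dualnorm{\separator}\le1\}$ is unbounded.
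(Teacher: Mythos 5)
Your proof is essentially correct, but the route to strong duality is genuinely different from the paper's. The paper never exchanges the $\min$ and the $\sup$: from \eqref{eq:coding-problem-guage-function-equivalent} and \eqref{eq:strong-duality-of-guage-function} it only extracts the weak-duality inequality $(\samplecost)^{1/\horder}\geq\sup_{\dualnorm{\separator}\leq1}\big(\inprod{\separator}{\samplevec}-\error\inpnorm{\separator}\big)$, and then closes the gap case by case: when $\separatorset\neq\emptyset$ the upper bound is attained at any $\separator\opt\in\separatorset$ by Definition \ref{def:optimal-separator-set}, and when $\separatorset=\emptyset$ (which by Proposition \ref{proposition:description-of-the-separator-set} forces $\regulizer=0$ and $\nbhood{\samplevec}{\error}\cap\image(\linmap)=\emptyset$) it constructs the explicit maximizing sequence $\separator(\alpha)=\separator'+\alpha\big(\samplevec-\pi_{\linmap}(\samplevec)\big)$ and computes the limit of the objective to show the supremum equals $(\samplecost)^{1/\horder}$ without being attained. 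You replace all of this with one application of Sion's minimax theorem. That does work --- the loose end you flag is not actually a problem, since Sion only requires one of the two sets to be compact and $\closednbhood{\samplevec}{\error}$ is compact regardless of whether $\{\separator:\dualnorm{\separator}\leq1\}$ is bounded --- and it buys a shorter, uniform proof of (i) that in particular subsumes the delicate non-attainment case. What it costs is self-containedness and information: the paper's explicit sequence $\separator(\alpha)$ exhibits how the supremum is approached when it is not achieved, which is the content behind the remark that non-attainment occurs exactly when the primal is not strictly feasible. Two smaller observations: your equality-case chain through $y\opt$ in (ii)(b) is more than is needed --- once you know $\separator\opt$ is optimal with value $(\samplecost)^{1/\horder}>0$ and the scaling argument gives $\dualnorm{\separator\opt}=1$, the two defining conditions of $\separatorset$ are already verified, which is exactly how the paper's Case 3 concludes --- and in the converse scaling argument you should note (as the paper does via Lemma \ref{lemma:separation-dummy}) that $\dualnorm{\separator\opt}>0$ follows from $\inprod{\separator\opt}{\samplevec}-\error\inpnorm{\separator\opt}>0$, so the normalization is legitimate.
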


\begin{remark} \rm{
We provide a complete description of the set \( \separatorset \) in Proposition \ref{proposition:description-of-the-separator-set}. It turns out that the optimal solution to the dual problem \eqref{eq:coding-problem-equivalent-sup-problem} can be entirely characterization in terms of the optimal solution \( \big( \samplecost , \codes \big) \) to the LIP \eqref{eq:coding-problem} itself. Therefore, if we only have access to a black box that produces an optimal solution to the LIP \eqref{eq:coding-problem}, a corresponding dual optimal solution can be easily computed from the solutions to the LIP itself, and we need not solve the dual problem again separately. This is very advantageous in dictionary learning, where the optimal value of dual variable is required to compute a better dictionary. }
\end{remark}

\begin{remark} \rm{
We look ahead at Proposition \ref{proposition:description-of-the-separator-set} and see that the dual problem does not admit any optimal solution only when \( \regulizer = 0 \) and \( \nbhood{\samplevec}{\error} \cap \image (\linmap) = \emptyset \). Interestingly, in that case, we also observe that the corresponding primal problem is not strictly feasible.
} \end{remark}

\subsection{Equivalent min-max problems} Whenever \( \inpnorm{\samplevec} \leq \error \), we immediately see that the pair \( \R{}_+ \times \R{\dsize} \ni (\mathsf{c}_{\samplevec}, \repvec_{\samplevec} ) \coloneqq (0,0) \) is feasible for \eqref{eq:coding-problem}. Moreover, since \( \cost (\repvec) > 0 \) for every \( \repvec \neq 0 \) due to inf-compactness and positive homogeneity, we conclude that:
\begin{equation}
\label{eq:zero-approximate-sample}
\samplecost = 0 \quad \text{if and only if } \inpnorm{\samplevec} \leq \error .
\end{equation}
Therefore, the case: \( \inpnorm{\samplevec} \leq \error \) is uninteresting and inconsequential. In fact, in dictionary learning, since every such sample can be effectively represented by the zero vector, irrespective of the dictionary. The average cost of representation then depends only on the samples that satisfy \( \inpnorm{\samplevec} > \error \). Therefore, for the convex-concave min-max formulation of the LIP \eqref{eq:coding-problem}, we consider only the case when \( \inpnorm{\samplevec} > \error \).

\subsubsection*{Constrained formulation}
\begin{theorem}
\label{theorem:coding-problem-primal-dual} 
Let the linear map \( \linmap : \R{\dsize} \longrightarrow \hilbert \) , real numbers \( \error , \regulizer \geq 0 \),  \( q \in ]0 , 1[ \), \( r > 0 \) and \( \samplevec \in \hilbert \setminus \closednbhood{\samplevec}{\error} \) be given. Consider the linear inverse problem \eqref{eq:coding-problem} and the following inf-sup problem:
\begin{equation}
\label{eq:coding-problem-primal-dual}
\begin{cases}
\begin{aligned}
& \min_{\hvar \; \in \; \costatomset} \; \sup_{\separator } \ \ && r \Big( \inprod{\separator}{\samplevec} -  \error \inpnorm{\separator} \Big)^q - \Big( \regulizer \inpnorm{\separator} + \inprod{\separator}{\linmap (\hvar)} \Big) \\
& \sbjto &&  \inprod{\separator}{\samplevec} - \error \inpnorm{\separator} > 0 .
\end{aligned}
\end{cases}
\end{equation}
The following assertions hold with regards to the coding problem \eqref{eq:coding-problem} and its equivalent \eqref{eq:coding-problem-primal-dual}. 
\begin{enumerate}[leftmargin = *, label = \rm{(\roman*)}]
\item The optimal value of \eqref{eq:coding-problem-primal-dual} is equal to: \( \; s(r , q) \; ( \samplecost )^\frac{q}{\horder (1 - q)} \), and therefore finite if and only if \( \samplevec \) is \( \Depsdelta \)-feasible.\footnote{ \( s(r, q) \define \Big( (1 - q) (q^q r )^{\frac{1}{1 - q}} \Big) \; \) }

\item Existence and description of an optimal solution to \eqref{eq:coding-problem-primal-dual}.
\begin{enumerate}[leftmargin = *, label = \rm{(\alph*)}]
\item The minimization over variables \( \hvar \) in \eqref{eq:coding-problem-primal-dual} is achieved, and 
\begin{equation*}
\hvar\opt \in \;
\argmin\limits_{\hvar \; \in \; \costatomset}
\begin{cases}
\begin{aligned}
& \sup_{\separator } \ \ &&  r \Big( \inprod{\separator}{\samplevec} -  \error \inpnorm{\separator} \Big)^q - \Big( \regulizer \inpnorm{\separator} + \inprod{\separator}{\linmap (\hvar)} \Big) \\
& \sbjto &&  \inprod{\separator}{\samplevec} - \error \inpnorm{\separator} > 0 ,
\end{aligned}
\end{cases}    
\end{equation*}
if and only if \( \hvar\opt \in \frac{1}{(\samplecost)^{1/\horder}} \cdot \codes \).

\item In addition, the inf-sup problem \eqref{eq:coding-problem-primal-dual} admits a saddle point solution if and only if the set \( \separatorset \) is non-empty, then a pair \( (\hvar\opt , \separator\opt) \in \costatomset \times \hilbert \) is a saddle point solution to \eqref{eq:coding-problem-primal-dual} if and only if 
\[
\begin{aligned}
\hvar\opt & \in \frac{1}{(\samplecost)^{1/\horder}} \cdot \codes \ \text{and} \\
\separator\opt & \in ( r q )^{\frac{1}{1 - q}} \big( \samplecost \big)^{\frac{q}{\horder (1 - q)}} \cdot \separatorset .
\end{aligned}
\]
\end{enumerate}
 \end{enumerate}
\end{theorem}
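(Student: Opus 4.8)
\textit{Plan of proof.} Throughout, abbreviate $a(\separator)\define\inprod{\separator}{\samplevec}-\error\inpnorm{\separator}$ and, for $\hvar\in\costatomset$, $b(\separator;\hvar)\define\regulizer\inpnorm{\separator}+\inprod{\separator}{\linmap(\hvar)}$; both are positively homogeneous of degree one in $\separator$, $b(\cdot\,;\hvar)$ is sublinear, and the objective of \eqref{eq:coding-problem-primal-dual} is $r\,a(\separator)^{q}-b(\separator;\hvar)$ on the open cone $\{a(\separator)>0\}$. \emph{Step 1: reduce the inner supremum to a one-dimensional problem.} Fix $\hvar$ and write $\separator=t\mu$ with $t>0$; by homogeneity the inner supremum splits as $\sup_{\mu:a(\mu)>0}\sup_{t>0}\bigl(rt^{q}a(\mu)^{q}-t\,b(\mu;\hvar)\bigr)$. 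For a direction with $a(\mu)>0$: if $b(\mu;\hvar)\le 0$ the supremum over $t$ is $+\infty$ (since $q<1$); if $b(\mu;\hvar)>0$, elementary calculus gives the maximiser $t_{\mu}=\bigl(rq\,a(\mu)^{q}/b(\mu;\hvar)\bigr)^{1/(1-q)}$ and value $s(r,q)\bigl(a(\mu)/b(\mu;\hvar)\bigr)^{q/(1-q)}$. Hence, setting $\beta(\hvar)\define\sup\{a(\mu)/b(\mu;\hvar):a(\mu)>0,\ b(\mu;\hvar)>0\}$, the inner supremum of \eqref{eq:coding-problem-primal-dual} equals $s(r,q)\,\beta(\hvar)^{q/(1-q)}$ whenever $\{a(\cdot)>0\}\subseteq\{b(\cdot\,;\hvar)>0\}$, and equals $+\infty$ otherwise; by homogeneity $\beta(\hvar)=\sup\{a(\mu):b(\mu;\hvar)\le 1\}$ in the non-degenerate case.

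\emph{Step 2: lower bound and identification via duality.} From Definition \ref{definition:atomball} and \eqref{eq:dual-norm-definition}, $\dualnorm{\separator}=\regulizer\inpnorm{\separator}+\sup_{f\in\costatomset}\inprod{\separator}{\linmap(f)}$, so $b(\separator;\hvar)\le\dualnorm{\separator}$ for every $\hvar\in\costatomset$. Combining this with Theorem \ref{theorem:coding-problem-equivalent-sup-problem}(i) (which gives $\sup\{a(\separator):\dualnorm{\separator}\le 1\}=(\samplecost)^{1/\horder}$, finite iff $\samplevec$ is $\Depsdelta$-feasible) and \eqref{eq:zero-approximate-sample} (so $(\samplecost)^{1/\horder}>0$ since $\inpnorm{\samplevec}>\error$), positive homogeneity yields $\beta(\hvar)\ge\sup\{a(\separator)/\dualnorm{\separator}:a(\separator)>0\}=(\samplecost)^{1/\horder}$ for every $\hvar$ when $\samplevec$ is feasible, and $\beta(\hvar)=+\infty$ for every $\hvar$ otherwise. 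Thus the value of \eqref{eq:coding-problem-primal-dual} is at least $s(r,q)(\samplecost)^{q/(\horder(1-q))}$ (and $+\infty$ in the infeasible case). For the matching upper bound, take $f\opt\in\codes$ and $\hvar\opt\define f\opt/(\samplecost)^{1/\horder}\in\costatomset$; feasibility of $\bigl((\samplecost)^{1/\horder},f\opt\bigr)$ for \eqref{eq:coding-problem}, i.e. $\inpnorm{\samplevec-\linmap(f\opt)}\le\error+\regulizer(\samplecost)^{1/\horder}$, gives by Cauchy--Schwarz $a(\separator)\le(\samplecost)^{1/\horder}\,b(\separator;\hvar\opt)$ for \emph{all} $\separator$, hence $\beta(\hvar\opt)\le(\samplecost)^{1/\horder}$, so $\beta(\hvar\opt)=(\samplecost)^{1/\horder}$ and the inner supremum at $\hvar\opt$ equals $s(r,q)(\samplecost)^{q/(\horder(1-q))}$. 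This establishes (i) and the ``if'' direction of (ii)(a).

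\emph{Step 3: minimisers and saddle points.} For the ``only if'' part of (ii)(a), let $\hvar$ attain the minimum, so $\beta(\hvar)=(\samplecost)^{1/\horder}<+\infty$; then $\{a(\cdot)>0\}\subseteq\{b(\cdot\,;\hvar)>0\}$ and $a(\separator)\le(\samplecost)^{1/\horder}b(\separator;\hvar)$ for every $\separator$ with $a(\separator)>0$. One then upgrades this inequality to all of $\hilbert$; granting that, $\inpnorm{\samplevec-(\samplecost)^{1/\horder}\linmap(\hvar)}\le\error+\regulizer(\samplecost)^{1/\horder}$, so $f\define(\samplecost)^{1/\horder}\hvar$ obeys $\cost(f)=\samplecost\,\cost(\hvar)\le\samplecost$ and the error constraint, making $\bigl((\samplecost)^{1/\horder},f\bigr)$ feasible for \eqref{eq:coding-problem} with objective $\samplecost$, hence optimal, so $f\in\codes$ and $\hvar\in\tfrac1{(\samplecost)^{1/\horder}}\codes$. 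For (ii)(b): $(\hvar\opt,\separator\opt)$ is a saddle point iff $\hvar\opt$ minimises (so $\hvar\opt\in\tfrac1{(\samplecost)^{1/\horder}}\codes$ by (ii)(a)) and $\separator\opt$ attains the now-finite inner supremum at $\hvar\opt$; by Step 1 the latter happens iff $\beta(\hvar\opt)$ is attained at some direction $\mu\opt$, normalised so $b(\mu\opt;\hvar\opt)=1$, whence $a(\mu\opt)=(\samplecost)^{1/\horder}$ and $\separator\opt=t_{\mu\opt}\mu\opt$ with $t_{\mu\opt}=(rq)^{1/(1-q)}(\samplecost)^{q/(\horder(1-q))}$. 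The inequalities of Step 2 force $\dualnorm{\mu\opt}=b(\mu\opt;\hvar\opt)=1$, so $\mu\opt\in\separatorset$; conversely, for any $\mu\opt\in\separatorset$ compactness of $\costatomset$ gives $\hvar\opt\in\argmax_{f\in\costatomset}\inprod{\mu\opt}{\linmap(f)}$, for which $b(\mu\opt;\hvar\opt)=\dualnorm{\mu\opt}=1$ and $\beta(\hvar\opt)=(\samplecost)^{1/\horder}$, producing a saddle point. This gives both the characterisation $\separator\opt\in(rq)^{1/(1-q)}(\samplecost)^{q/(\horder(1-q))}\cdot\separatorset$ and the equivalence with $\separatorset\neq\emptyset$.

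\emph{Main obstacle.} The one-dimensional optimisation in Step 1 and the homogeneity/Cauchy--Schwarz manipulations are routine. The real work is the upgrade in Step 3 — showing that a minimiser $\hvar$ of the outer problem actually yields a \emph{feasible}, hence optimal, code $(\samplecost)^{1/\horder}\hvar$ for \eqref{eq:coding-problem}, equivalently that the positively homogeneous inequality $a(\cdot)\le(\samplecost)^{1/\horder}b(\cdot\,;\hvar)$ extends from the (proper, when $\error>0$) cone $\{a(\cdot)>0\}$ to every direction. This, together with the bookkeeping of the degenerate directions where $b(\cdot\,;\hvar)\le 0$ or $\dualnorm{\cdot}=0$ (which only arise when $\regulizer=0$), is handled using the separating-hyperplane description of $\codes$ and $\separatorset$ developed in Section \ref{section:convex-geometry} (see Proposition \ref{proposition:description-of-the-separator-set}).
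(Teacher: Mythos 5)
Your architecture is essentially the paper's: the one-dimensional reduction of the inner supremum in your Step~1 is exactly Lemma~\ref{lemma:sup-problem-2}, your quantity $\beta(\hvar)$ coincides with the paper's $\dualvar_{\hvar}$ of Lemma~\ref{lemma:sup-problem-1} (in its dual description $\sup\{a(\mu):b(\mu;\hvar)\le 1\}$), and the comparison of $\beta(\hvar)$ with $(\samplecost)^{1/\horder}$ plays the role of Lemma~\ref{lemma:sup-problem-1}(ii). Your Cauchy--Schwarz upper bound $\beta(\hvar\opt)\le(\samplecost)^{1/\horder}$ at $\hvar\opt=f\opt/(\samplecost)^{1/\horder}$ is a clean shortcut for the ``if'' direction of (ii)(a). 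However, there is a genuine gap at the step you yourself flag as the main obstacle, and the resolution you point to does not close it.

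The gap is the ``upgrade'' in Step~3: from $\beta(\hvar)=(\samplecost)^{1/\horder}$ you have the inequality $a(\separator)\le(\samplecost)^{1/\horder}\,b(\separator;\hvar)$ only on the open cone $\{a>0\}$, and you need it on all of $\hilbert$ (equivalently, you need the ball $\closednbhood{(\samplecost)^{1/\horder}\linmap(\hvar)}{(\samplecost)^{1/\horder}\regulizer}$ to meet $\closednbhood{\samplevec}{\error}$). A positively homogeneous inequality valid on a proper open cone does \emph{not} extend to all directions in general, and the natural test functional $\separator_0=\samplevec-(\samplecost)^{1/\horder}\linmap(\hvar)$ need not satisfy $a(\separator_0)>0$, so you cannot simply evaluate the cone inequality at $\separator_0$. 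Deferring this to Proposition~\ref{proposition:description-of-the-separator-set} is circular in spirit: that proposition characterizes $\separatorset$ downstream of the duality results, and it says nothing about an arbitrary candidate minimizer $\hvar$ of the outer problem. The missing idea is the one in the paper's Lemma~\ref{lemma:sup-problem-1}: if the balls at scale $\theta$ do not intersect, separate $\closednbhood{\samplevec}{\error}$ not from the single ball but from the convex set $S'(\theta)=\bigcup_{\tau\in[0,\theta]}\closednbhood{\linmap(\tau\hvar)}{\tau\regulizer}$, which contains the origin; since $0\notin\closednbhood{\samplevec}{\error}$, the separating functional $\separator'$ automatically satisfies $a(\separator')>\max_{z\in S'(\theta)}\inprod{\separator'}{z}\ge 0$, hence lies in the admissible cone and witnesses $\beta(\hvar)>\theta$. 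This is exactly the strong-duality content your argument needs and currently asserts. A secondary, smaller gap of the same flavour occurs in your sufficiency argument for (ii)(b): you must show that \emph{every} $\hvar\opt\in\frac{1}{(\samplecost)^{1/\horder}}\codes$ maximizes $\inprod{\separator\opt}{\linmap(\cdot)}$ over $\costatomset$ (the paper's Lemma~\ref{lemma:optimality-condition-of-hvar-and-separator}, which uses the co-linearity of $\separator\opt$ with $\samplevec-\linmap(\repvec_{\samplevec})$), rather than selecting $\hvar\opt$ to be such a maximizer.
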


\begin{corollary}
\label{corollary:encoding-cost-min-max-problem}
By considering \( r = r (\horder) \define (1 + \horder) \horder^{\frac{ - \horder}{1 + \horder}} \) and \( q = q (\horder) \define \frac{\horder}{1 + \horder} \), we get
\[
\samplecost = 
\begin{cases}
\begin{aligned}
& \min_{\hvar \; \in \; \costatomset} \; \sup_{\separator } \ \ && r (\horder) \Big( \inprod{\separator}{\samplevec} -  \error \inpnorm{\separator} \Big)^{q (\horder)} - \Big( \regulizer \inpnorm{\separator} + \inprod{\separator}{\linmap (\hvar)} \Big) \\
& \sbjto &&  \inprod{\separator}{\samplevec} - \error \inpnorm{\separator} > 0 .
\end{aligned}
\end{cases}
\]
In particular, if the cost function \( \cost (\cdot) \) is positively homogeneous of order \( 1 \) (like any norm), we have
\[
\samplecost = 
\begin{cases}
\begin{aligned}
& \min_{\hvar \; \in \; \costatomset} \; \sup_{\separator } \ \ && 2 \sqrt{ \inprod{\separator}{\samplevec} -  \error \inpnorm{\separator} } - \Big( \regulizer \inpnorm{\separator} + \inprod{\separator}{\linmap (\hvar)} \Big) \\
& \sbjto &&  \inprod{\separator}{\samplevec} - \error \inpnorm{\separator} > 0 .
\end{aligned}
\end{cases}
\]
\end{corollary}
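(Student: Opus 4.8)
The plan is to specialise the closed-form expression for the optimal value furnished by Theorem~\ref{theorem:coding-problem-primal-dual}(i). Recall that for arbitrary admissible $r > 0$ and $q \in\, ]0,1[$ the optimal value of~\eqref{eq:coding-problem-primal-dual} equals $s(r,q)\,(\samplecost)^{\frac{q}{\horder(1-q)}}$, where $s(r,q) = (1-q)\,(q^q r)^{\frac{1}{1-q}}$. It therefore suffices to verify that the particular choice $q = q(\horder) = \frac{\horder}{1+\horder}$ and $r = r(\horder) = (1+\horder)\,\horder^{-\horder/(1+\horder)}$ makes the exponent on $\samplecost$ equal to $1$ and the multiplicative constant $s(r,q)$ equal to $1$; one also notes in passing that $q(\horder) \in\, ]0,1[$ and $r(\horder) > 0$ for every $\horder > 0$, so the theorem does apply.

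First I would compute the exponent. Since $q = \frac{\horder}{1+\horder}$ gives $1-q = \frac{1}{1+\horder}$, we get $\frac{q}{\horder(1-q)} = \frac{\horder/(1+\horder)}{\horder/(1+\horder)} = 1$, hence $(\samplecost)^{\frac{q}{\horder(1-q)}} = \samplecost$.

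Next I would compute $s(r,q)$. From $1-q = \frac{1}{1+\horder}$ we have $\frac{1}{1-q} = 1+\horder$, so $s(r,q) = \frac{1}{1+\horder}\,(q^q r)^{1+\horder}$. Substituting the chosen values,
\[
q^q r = \Big(\tfrac{\horder}{1+\horder}\Big)^{\horder/(1+\horder)} (1+\horder)\,\horder^{-\horder/(1+\horder)} = (1+\horder)\,(1+\horder)^{-\horder/(1+\horder)} = (1+\horder)^{1/(1+\horder)},
\]
whence $(q^q r)^{1+\horder} = 1+\horder$ and $s(r,q) = 1$. Combining the two computations, the optimal value of~\eqref{eq:coding-problem-primal-dual} under these parameters is exactly $\samplecost$, which is the first displayed identity. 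The second displayed identity is simply the case $\horder = 1$: then $q(1) = \tfrac12$ and $r(1) = 2$, and since $(\,\cdot\,)^{1/2} = \sqrt{\,\cdot\,}$ the objective becomes $2\sqrt{\inprod{\separator}{\samplevec} - \error\inpnorm{\separator}} - \big(\regulizer\inpnorm{\separator} + \inprod{\separator}{\linmap(\hvar)}\big)$, as claimed.

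I do not anticipate any genuine obstacle here: the statement is a direct algebraic corollary of Theorem~\ref{theorem:coding-problem-primal-dual}, and the only point requiring care is the exponent bookkeeping in simplifying $q^q r$ and then raising it to the power $\frac{1}{1-q} = 1+\horder$.
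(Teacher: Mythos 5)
Your proposal is correct and is precisely the argument the paper intends: the corollary is stated without a separate proof because it is exactly this specialization of Theorem \ref{theorem:coding-problem-primal-dual}(i), and your computations verifying that $\frac{q}{\horder(1-q)}=1$ and $s(r(\horder),q(\horder))=1$ (via $q^q r=(1+\horder)^{1/(1+\horder)}$) are accurate, as is the reduction of the second display to the case $\horder=1$.
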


\begin{remark} \rm{
In the case of signal recovery from its linear measurements, it is assumed that the true signal is a linear combination of only a few elements of some atomic set \( \mathcal{A} \). The signal is recovered by solving the linear inverse problem \eqref{eq:coding-problem}, by considering the cost function \( \cost \) such that \( \costatomset = \convhull (\mathcal{A}) \) and \( \horder = 1 \). If the conditions on minimum number and type of measurements for fruitful recovery are satisfied, then the set \( \codes \) is a singleton and contains the signal to be recovered.
} \end{remark}

\begin{remark} \rm{
In several scenarios like the problem of non-negative matrix factorization \cite{lee2001algorithms, lee1999learning, hoyer2004non}, one has to solve \eqref{eq:coding-problem} with the additional constraint that \( \repvec \in Q \), where \( Q \subset \R{\dsize} \) is a convex cone. From similar analysis provided in this chapter, it can be easily verified that the resulting equivalent min-max formulation analogous to \eqref{eq:coding-problem-primal-dual} is
\[
\begin{cases}
\begin{aligned}
& \min_{\hvar \; \in \; \costatomset \cap Q} \; \sup_{\separator } \ \ && r \Big( \inprod{\separator}{\samplevec} -  \error \inpnorm{\separator} \Big)^q - \Big( \regulizer \inpnorm{\separator} + \inprod{\separator}{\linmap (\hvar)} \Big) \\
& \sbjto &&  \inprod{\separator}{\samplevec} - \error \inpnorm{\separator} > 0 .
\end{aligned}
\end{cases}
\]
It should be noted that the definition of the set \( \atomball \) then changes to
\[
\atomball \define \{ z \in \hilbert : \text{ there exists } \repvec \in \costatomset \cap Q \text{ satisfying } \inpnorm{\samplevec - \linmap (\repvec)} \leq \regulizer \} ,
\]
and the quantities \( \norm{\cdot}{\linmap} \), \( \dualnorm{\cdot} \) and \( \separatorset \) are accordingly defined w.r.t. the appropriate definition of the set \( \atomball \).
} \end{remark}

\begin{remark} \rm{
The constraint \( \inprod{\separator}{\samplevec} - \error \inpnorm{\separator} > 0 \) is an inactive constraint, and is insignificant for theoretical purpose. However, in practice, one must ensure that the inequality is satisfied so that the quantity \( \big( \inprod{\separator}{\samplevec} - \error \inpnorm{\separator} \big)^q \) is well defined for \( q \in ]0,1[ \). This is easily ensured by initialising the variable \( \separator \) so that it satisfies the inequality (for e.g., \( \separator_0 = \samplevec \)), and then properly selecting the step sizes in subsequent iterations. Suppose if the variable \( \separator \) is updated as \( \separator \longleftarrow \separator + \alpha \separator' \), then we must ensure that the inequality \( 0 \; < \; \inprod{\separator + \alpha \separator'}{\samplevec} - \error \inpnorm{\separator + \alpha \separator'} \) is satisfied. We see that
\[
\begin{aligned}
\inprod{\separator + \alpha \separator'}{\samplevec} - \error \inpnorm{\separator + \alpha \separator'} 
& \geq \inprod{\separator + \alpha \separator'}{\samplevec} - \error \big( \inpnorm{\separator} + \alpha \inpnorm{\separator'} \big) \\
& \geq \big( \inprod{\separator}{\samplevec} - \error \inpnorm{\separator} \big) \; + \; \alpha \big( \inprod{\separator'}{\samplevec} - \error \inpnorm{\separator'} \big) .
\end{aligned}
\]
If \( 0 \leq \inprod{\separator'}{\samplevec} - \error \inpnorm{\separator'} \), then the required inequality trivially holds, whereas if \( 0 > \inprod{\separator'}{\samplevec} - \error \inpnorm{\separator'} \), then by selecting \( \alpha < \frac{\inprod{\separator}{\samplevec} - \error \inpnorm{\separator}}{\abs{\inprod{\separator'}{\samplevec} - \error \inpnorm{\separator'}}} \), it is easily verified that \( 0 < \inprod{\separator + \alpha \separator'}{\samplevec} - \error \inpnorm{\separator + \alpha \separator'} \).
} \end{remark}

\begin{remark} \rm{
\label{remark:other-min-max-form-for-p>1}
Minimizing over \( \hvar \) in the min-max problem of Corollary \ref{corollary:encoding-cost-min-max-problem}, we get
\[
\samplecost \ = \ 
\sup_{\separator } \; \bigg\{ \;  r(\horder) \Big( \inprod{\separator}{\samplevec} -  \error \inpnorm{\separator} \Big)^{\frac{\horder}{\horder + 1}} - \dualnorm{\separator} .
\]
A crucial observation to be made here is that since \( \frac{\horder}{\horder + 1} \in ]0,1[ \), the sublinear component \( \big( \inprod{\separator}{\samplevec} - \error \inpnorm{\separator} \big)^{\frac{\horder}{\horder + 1}} \) initially grows faster than the linear component \( \dualnorm{\separator} \) but is eventually overpowered. Alternatively, when \( \horder > 1 \), analysis similar to the proof of Lemma \ref{lemma:sup-problem-2} shows that 
\begin{equation}
\label{eq:other-max-form-for-p>1}
\samplecost \ = \ 
\sup_{\separator } \; \bigg\{ \; \horder (\horder - 1)^{\horder - 1} \Big( \inprod{\separator}{\samplevec} - \error \inpnorm{\separator} \Big) - \dualnorm{\separator}^{\frac{\horder}{\horder - 1}} . 
\end{equation}

If \( \regulizer = 0 \) and we were to replace the error constraint \( \linmap (\repvec) \in \closednbhood{\samplevec}{\error} \) with \( \linmap (\repvec) \in B_{\samplevec} \), where \( B_{\samplevec} \subset \hilbert \) is some compact convex subset. By replacing \( \inprod{\separator}{\samplevec} - \error \inpnorm{\separator} \) with the quantity \( \min\limits_{y \; \in \; B_{\samplevec}} \inprod{\separator}{y} \) in \eqref{eq:other-max-form-for-p>1}, we arrive at the corresponding min-max problem
\begin{equation}
\label{eq:lasso-general}
\begin{cases}
\begin{aligned}
& \min_{y \; \in \; B_{\samplevec}} \; \sup_{\separator \; \in \; \hilbert } \ \ &&  \horder (\horder - 1)^{\horder - 1} \inprod{\separator}{y} \; - \; \dualnorm{\separator}^{\frac{\horder}{\horder - 1}} .
\end{aligned}
\end{cases}
\end{equation}
This allows us to write equivalent min-max formulations for problems like LASSO as seen in Remark \ref{remark:lasso}.
} \end{remark}

\begin{remark} \rm{
\label{remark:lasso}
Another optimization problem of interest and related to the linear inverse problem \eqref{eq:coding-problem} is:
\begin{equation}
\label{eq:lip-Lasso}
\minimize_{\repvec \; \in \; S} \quad \cost (\samplevec - \psi (\repvec)) \; ,
\end{equation}
where \( \cost (\cdot) \) is a cost function satisfying Assumption \ref{assumption:cost-function} with order of homogeneity \( \horder > 1 \), \( \psi : \R{\dsize} \longrightarrow \hilbert \) is some linear map and \( S \) is some compact convex subset of \( \R{\dsize} \). A frequent example of such a problem is LASSO, where \( \cost (\cdot) = \inpnorm{\cdot}^2 \) and \( S = \{ z : \norm{z}{1} \leq \tau \} \).

Redefining \( y \define \psi (\repvec) - \samplevec \), the problem \eqref{eq:lip-Lasso} simply becomes \( \minimize\limits_{ y \; \in \; \psi (S) - \{ \samplevec \} } \ \cost (y) \), whose equivalent min-max form using \eqref{eq:lasso-general} is
\[
\min_{y \; \in \; \psi (S) - \{ \samplevec \} } \ \max_{\separator \; \in \; \hilbert} \quad \horder (\horder - 1)^{\horder - 1} \inprod{\separator}{y} \; - \; \norm{\separator}{\cost}'^{\frac{\horder}{\horder - 1}} ,
\]
where \( \norm{\separator}{\cost}' \define \max\limits_{\hvar \; \in \; \costatomset} \; \inprod{\separator}{\hvar} \). Replacing the minimization over variable \( y \) with \( \repvec \), we get the final equivalent min-max form to \eqref{eq:lip-Lasso}.
\begin{equation}
\min_{\repvec \; \in \; S} \ \max_{\separator \; \in \; \hilbert} \quad \horder (\horder - 1)^{\horder - 1} \Big( \inprod{\separator}{\samplevec - \psi (\repvec)} \Big) \; - \; \norm{\separator}{\cost}'^{\frac{\horder}{\horder - 1}} .
\end{equation}
} \end{remark}

\begin{remark} \rm{
When the error constraint \( \inpnorm{\samplevec - \linmap (\repvec)} \leq \error + \regulizer \mathsf{c} \) is measured using a generic norm \( \inpnorm{\cdot} \), the min-max problem is written using the corresponding dual norm \( \inpnorm{\cdot}' \) as
\[
\begin{cases}
\begin{aligned}
& \min_{\hvar \; \in \; \costatomset} \; \sup_{\separator \; \in \; \hilbert } \ \ && r (\horder) \Big( \inprod{\separator}{\samplevec} - \error \inpnorm{\separator}' \Big)^{q (\horder)} - \Big( \regulizer \inpnorm{\separator}' + \inprod{\separator}{\linmap (\hvar)} \Big) .
\end{aligned}
\end{cases}
\]
} \end{remark}

\subsubsection*{Unconstrained formulation}
The reformulation of the LIP \eqref{eq:coding-problem} as the convex-concave min-max problem \eqref{eq:coding-problem-primal-dual} provides a way to obtain a solution to the LIP \eqref{eq:coding-problem} by solving the min-max problem instead. Even though  \eqref{eq:coding-problem-primal-dual} is a convex problem, it is to be observed that the minimization variable \( \hvar \) is constrained. Therefore, updating the iterates of the minimization variable involves computing projections at each iterations, which could be an expensive task. So, in cases, where these projections are expensive, we seek to obtain a similar min-max reformulation which bypasses this computational bottleneck due to projections.

\begin{proposition}
\label{proposition:coding-problem-unconstrained-min-max}
Let \( \error, \regulizer \geq 0 \) and \( \samplevec \in \hilbert \setminus \closednbhood{0}{\error} \), and a positively homogeneous cost function \( \cost (\cdot) \) of order \( 1 \) be given. Then the following assertions hold in view of the linear inverse problem \eqref{eq:coding-problem} and the min-max problem:
\begin{equation}
\label{eq:coding-problem-unconstrained-min-max}
\begin{cases}
\begin{aligned}
& \min_{\repvec \; \in \; \R{\dsize}} \; \sup_{\separator \; \in \; \hilbert } \ \ &&  \cost (\repvec) \big( 1 - \regulizer \inpnorm{\separator} \big) + \big( \inprod{\separator}{\samplevec} -  \error \inpnorm{\separator} \big) - \inprod{\separator}{\linmap (\repvec)} \\
& \sbjto && 
\begin{cases}
\inpnorm{\separator} \leq \frac{1}{\regulizer} \\
\inprod{\separator}{\samplevec} - \error \inpnorm{\separator} > 0 ,
\end{cases}
\end{aligned}
\end{cases}
\end{equation}
\begin{enumerate}[leftmargin = *, label = \rm{(\roman*)}]
\item the optimal value of the min-max problem is equal to \( \samplecost \)
\item the minimization over variable \( \repvec \) is achieved and \( \codes \) is the set of minimizers
\item the min-max problem \eqref{eq:coding-problem-unconstrained-min-max} admits a saddle point solution \( (\repvec\opt , \separator\opt) \) if and only if the set \( \separatorset \) is non-empty, then a pair \( (\repvec\opt , \separator\opt) \in \R{\dsize} \times \hilbert \) is saddle point solution if and only if \( \repvec\opt \in \codes \) and \( \separator\opt \in \separatorset \).
\end{enumerate}
\end{proposition}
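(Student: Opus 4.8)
The plan is to identify \eqref{eq:coding-problem-unconstrained-min-max} with the LIP \eqref{eq:coding-problem} by rewriting its objective as
\[
\Phi(\repvec,\separator) \;=\; \cost(\repvec) + \inprod{\separator}{\samplevec - \linmap(\repvec)} - \bigl(\error + \regulizer\cost(\repvec)\bigr)\inpnorm{\separator},
\]
on $\repvec \in \R{\dsize}$ and $\separator$ in the feasible set $\mathcal{F} \define \{\separator : \inpnorm{\separator} \le 1/\regulizer,\ \inprod{\separator}{\samplevec} - \error\inpnorm{\separator} > 0\}$; there $\Phi(\cdot,\separator)$ is convex and $\Phi(\repvec,\cdot)$ is concave. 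Since $\horder = 1$, \eqref{eq:coding-problem} is the minimization of $\cost(\repvec)$ under $\inpnorm{\samplevec - \linmap(\repvec)} \le \error + \regulizer\cost(\repvec)$, with value $\samplecost$ and solution set $\codes = \{\repvec : \cost(\repvec) \le \samplecost,\ \inpnorm{\samplevec - \linmap(\repvec)} \le \error + \regulizer\samplecost\}$. Two ingredients do the work. A Hölder-type bound: from $\repvec = \cost(\repvec)\cdot\repvec/\cost(\repvec)$ with $\repvec/\cost(\repvec)\in\costatomset$ (for $\repvec\ne0$) and $\sup_{\hvar\in\costatomset}\inprod{\separator}{\linmap(\hvar)} = \dualnorm{\separator} - \regulizer\inpnorm{\separator}$ one gets $\inprod{\separator}{\linmap(\repvec)} \le \cost(\repvec)(\dualnorm{\separator} - \regulizer\inpnorm{\separator})$, hence $\Phi(\repvec,\separator) \ge \cost(\repvec)(1 - \dualnorm{\separator}) + \inprod{\separator}{\samplevec} - \error\inpnorm{\separator}$. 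And strong duality (Theorem \ref{theorem:coding-problem-equivalent-sup-problem}): \eqref{eq:coding-problem-equivalent-sup-problem} has optimal value $\samplecost$, with a maximizer $\bar\separator$ unless $\separatorset = \emptyset$ (which, by the remarks there, forces $\regulizer = 0$ and $\nbhood{\samplevec}{\error} \cap \image(\linmap) = \emptyset$), in which case a maximizing sequence is used. I will also use that $\cost$ being finite, positively homogeneous and inf-compact forces $\cost(\repvec) > 0$ for $\repvec \ne 0$, so $0 \in \interior\costatomset$. Throughout I take $\samplevec$ to be $(\linmap,\error,\regulizer)$-feasible (automatic for $\regulizer > 0$); otherwise both sides of (i) are $+\infty$ and (ii)--(iii) are vacuous.

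Step 1 (lower bounds; (i); $\codes\subseteq$ minimizers). For every $\repvec$, sending $\separator = t\samplevec$ with $t\downarrow0$ (feasible for small $t$ as $\inpnorm{\samplevec}>\error$) gives $\sup_{\separator\in\mathcal{F}}\Phi(\repvec,\separator)\ge\cost(\repvec)$; and a dual-optimal $\bar\separator$ lies in $\mathcal{F}$ (since $\regulizer\inpnorm{\bar\separator}\le\dualnorm{\bar\separator}\le1$ and $\inprod{\bar\separator}{\samplevec}-\error\inpnorm{\bar\separator}=\samplecost>0$ by \eqref{eq:zero-approximate-sample}), with $\Phi(\repvec,\bar\separator)\ge\samplecost$ by the Hölder bound, so $\sup_{\separator\in\mathcal{F}}\Phi(\repvec,\separator)\ge\samplecost$ for every $\repvec$. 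Conversely, for $\repvec\in\codes$ and $\separator\in\mathcal{F}$,
\[
\Phi(\repvec,\separator) \le \cost(\repvec)\bigl(1 - \regulizer\inpnorm{\separator}\bigr) + \inpnorm{\separator}\bigl(\inpnorm{\samplevec - \linmap(\repvec)} - \error\bigr) \le \cost(\repvec) + \regulizer\inpnorm{\separator}\bigl(\samplecost - \cost(\repvec)\bigr) \le \samplecost,
\]
by Cauchy--Schwarz, then $\inpnorm{\samplevec-\linmap(\repvec)}-\error\le\regulizer\samplecost$, then $\regulizer\inpnorm{\separator}\le1$ and $\cost(\repvec)\le\samplecost$. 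Hence the optimal value of \eqref{eq:coding-problem-unconstrained-min-max} is $\samplecost$, which proves (i); the minimum over $\repvec$ is attained, and $\codes\subseteq\{\text{minimizers}\}$.

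Step 2 (minimizers $\subseteq\codes$, i.e. (ii); then (iii)). If $\repvec$ is a minimizer then $\sup_{\separator\in\mathcal{F}}\Phi(\repvec,\separator)=\samplecost$, so the first bound forces $\cost(\repvec)\le\samplecost$. Plugging in a dual-optimal $\bar\separator$, the two bounds squeeze $\Phi(\repvec,\bar\separator)=\samplecost$, make the Hölder bound tight, and make $\bar\separator$ (which is $\ne0$, with the constraint $\inprod{\separator}{\samplevec}-\error\inpnorm{\separator}>0$ strict there) a maximizer of the concave $\Phi(\repvec,\cdot)$ over $\overline{\mathcal{F}}$. First-order optimality splits: if $\inpnorm{\bar\separator}<1/\regulizer$, then $\nabla_\separator\Phi(\repvec,\bar\separator)=0$ reads $\samplevec-\linmap(\repvec)=(\error+\regulizer\cost(\repvec))\bar\separator/\inpnorm{\bar\separator}$, so $\inpnorm{\samplevec-\linmap(\repvec)}=\error+\regulizer\cost(\repvec)\le\error+\regulizer\samplecost$; if $\inpnorm{\bar\separator}=1/\regulizer$, then $\dualnorm{\bar\separator}=1$ forces $\sup_{\hvar\in\costatomset}\inprod{\bar\separator}{\linmap(\hvar)}=0$, hence $\linmap^{*}\bar\separator=0$ (as $0\in\interior\costatomset$), and combining the stationarity condition on the sphere with $\inprod{\bar\separator}{\samplevec}-\error\inpnorm{\bar\separator}=\samplecost$ and $\inprod{\bar\separator}{\linmap(\repvec)}=0$ gives $\inpnorm{\samplevec-\linmap(\repvec)}=\error+\regulizer\samplecost$. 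Either way $\repvec\in\codes$. For (iii): if $\separatorset\ne\emptyset$, pick $\repvec\opt\in\codes$, $\separator\opt\in\separatorset$; using $\dualnorm{\separator\opt}=1$ and the reduction $\repvec=t\hvar$ ($\hvar\in\costatomset$, $t\ge0$) one finds $\inf_\repvec\Phi(\repvec,\separator\opt)=\inprod{\separator\opt}{\samplevec}-\error\inpnorm{\separator\opt}=\samplecost$, attained at $\repvec=0$, while $\sup_{\separator\in\mathcal{F}}\Phi(\repvec\opt,\separator)=\samplecost$ by Step 1; hence $(\repvec\opt,\separator\opt)$ is a saddle point of value $\samplecost$. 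Conversely a saddle point $(\repvec\opt,\separator\opt)$ has $\repvec\opt$ a minimizer, so $\repvec\opt\in\codes$, and $\inf_\repvec\Phi(\repvec,\separator\opt)=\Phi(\repvec\opt,\separator\opt)\ge\samplecost$ (from the saddle inequalities and Step 1); the same infimum computation shows this is finite only if $\dualnorm{\separator\opt}\le1$, whence $\inprod{\separator\opt}{\samplevec}-\error\inpnorm{\separator\opt}\ge\samplecost$, an equality by strong duality, with $\dualnorm{\separator\opt}=1$ (else a small dilation of $\separator\opt$ beats $\samplecost$ in \eqref{eq:coding-problem-equivalent-sup-problem}, as $\samplecost>0$) and $\inpnorm{\separator\opt}>\error$; so $\separator\opt\in\separatorset$.

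The hard part will be the error-constraint half of (ii). Because $\mathcal{F}$ is open and directionally unbounded, one cannot differentiate naively: the argument needs (a) a density step showing the supremum over $\mathcal{F}$ coincides with that over $\overline{\mathcal{F}}$ (every point of the closed cone is a limit of $\mathcal{F}$-points, e.g. by adding $\tfrac1k\samplevec$, using $\inpnorm{\samplevec}>\error$), and (b) the KKT case split above, whose genuinely new point is that $0\in\interior\costatomset$ annihilates $\inprod{\bar\separator}{\linmap(\cdot)}$ on the face $\inpnorm{\separator}=1/\regulizer$. Two residual degenerate situations — a minimizer with $\cost(\repvec)=\samplecost$ whose inner maximizer is $\separator=0$, and $\regulizer=0$ (where $\mathcal{F}$ is unbounded, and one shows directly that $\repvec\notin\codes$ either pushes $\Phi(\repvec,\cdot)$ above $\samplecost$ along a feasible multiple of $\samplevec-\linmap(\repvec)$, or cannot occur since $\nbhood{\samplevec}{\error}\cap\image(\linmap)=\emptyset$ pins $\linmap(\repvec)$ to $\partial\closednbhood{\samplevec}{\error}$) — are routine but need separate handling.
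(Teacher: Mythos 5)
Your overall strategy is sound but genuinely different from the paper's. The paper never touches first-order conditions: it evaluates the inner supremum in closed form by first optimizing the \emph{direction} of \( \separator \) via Cauchy--Schwarz (so that the supremum reduces to a one-dimensional problem over \( \alpha = \inpnorm{\separator} \in [0, 1/\regulizer[ \)), obtaining \( \dualvar_{\repvec} = \cost(\repvec) + \tfrac{1}{\regulizer}\max\{0, \inpnorm{\samplevec - \linmap(\repvec)} - (\error + \regulizer \cost(\repvec))\} \) for \( \regulizer > 0 \) (and the analogous formulas for \( \regulizer = 0 \)); the pair \( (\dualvar_{\repvec}, \repvec) \) is then visibly feasible for \eqref{eq:coding-problem}, so \( \samplecost \leq \dualvar_{\repvec} \) with equality forcing \( \repvec \in \codes \), and the whole of (ii) falls out without any case split on where the maximizer sits. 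Your replacement --- a two-sided squeeze plus KKT analysis at a dual-optimal \( \bar\separator \) --- does work in the main line (your interior/boundary dichotomy and the use of \( 0 \in \interior \costatomset \) to kill \( \linmap^{\dagger}\bar\separator \) on the sphere both check out), but it is substantially more delicate, and it buys nothing over the direct computation. The saddle-point part (iii) is essentially the paper's argument in both directions.

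There is one concrete step that fails as written, in the \( \regulizer = 0 \) residual case of Step 2. You claim that \( \repvec \notin \codes \) with \( \inpnorm{\samplevec - \linmap(\repvec)} > \error \) "pushes \( \Phi(\repvec,\cdot) \) above \( \samplecost \) along a feasible multiple of \( \samplevec - \linmap(\repvec) \)". That multiple need not be feasible: take \( \samplevec = (1,0) \), \( \error = 1/2 \), \( \linmap(\repvec) = (2,0) \); then \( \separator = t(\samplevec - \linmap(\repvec)) = t(-1,0) \) violates \( \inprod{\separator}{\samplevec} - \error\inpnorm{\separator} > 0 \) for every \( t > 0 \), and in fact \( \sup_{\separator \in \mathcal{F}} \Phi(\repvec,\separator) = \cost(\repvec) \), not \( +\infty \). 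The situation is not fatal to the proposition, because the obstruction occurs exactly when \( \inpnorm{\samplevec - \theta\linmap(\repvec)} \leq \error \) for some \( \theta \in \left]0,1\right[ \), in which case \( \theta\repvec \) is feasible for \eqref{eq:coding-problem} and positive homogeneity gives \( \samplecost \leq \theta\cost(\repvec) < \cost(\repvec) \leq \sup_{\separator}\Phi(\repvec,\separator) \), so such an \( \repvec \) is still not a minimizer --- but that extra argument (or, alternatively, falling back on your own KKT contradiction at \( \bar\separator \in \separatorset \), which covers \( \regulizer = 0 \) whenever \( \separatorset \neq \emptyset \)) has to be supplied; your one-line dismissal of this case is the gap. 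Relatedly, note that the paper itself silently drops the constraint \( \inprod{\separator}{\samplevec} - \error\inpnorm{\separator} > 0 \) when computing \( \dualvar_{\repvec} \), so your attempt to respect it is commendable; you just need to close the branch above.
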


Even though the maximization variable is constrained, the reference to the min-max problem \eqref{eq:coding-problem-unconstrained-min-max} as ``unconstrained'' is due to the fact that projecting onto the feasible set \( \{ \separator : \inpnorm{\separator} \leq 1/\regulizer \} \) only requires normalizing the iterates \( \separator \) by the factor \( \frac{1}{\regulizer \inpnorm{\separator}} \), which is easy. Moreover, if \( \regulizer = 0 \), the min-max is truly unconstrained, justifying its name.

On the one hand, solving \eqref{eq:coding-problem-primal-dual} involves projecting the iterates \( \hvar \) onto the level set \( \costatomset \) at each iteration, which is generally a demanding task. Whereas, solving \eqref{eq:coding-problem-unconstrained-min-max} requires the relatively easier task of computing gradient of the cost function \( \cost (\cdot) \). However, it is observed that it takes fewer iterations to compute a saddle point solution to the constrained formulation \eqref{eq:coding-problem-primal-dual} compared to that of the unconstrained formulation \eqref{eq:coding-problem-unconstrained-min-max}. Therefore, if we were to find a solution to the LIP \eqref{eq:coding-problem} by solving the min-max problems \eqref{eq:coding-problem-primal-dual} or \eqref{eq:coding-problem-unconstrained-min-max}, the user has to decide between solving \eqref{eq:coding-problem-primal-dual} with fewer but expensive iterations or solving \eqref{eq:coding-problem-unconstrained-min-max} with relatively easier but more iterations. This tradeoff depends on the given cost function and could be equally expensive like in the case of minimizing Nuclear norm.

\begin{remark} \rm{
Observe that the order of homogeneity \( \horder \) in the LIP \eqref{eq:coding-problem} is assumed to be \( 1 \) in Proposition \ref{proposition:coding-problem-unconstrained-min-max}. If \( \horder > 1 \) and when \( \regulizer = 0 \), the equivalent unconstrained min-max reformulation similar to \eqref{eq:coding-problem-unconstrained-min-max} is
\begin{equation}
\begin{cases}
\begin{aligned}
& \min_{\repvec \; \in \; \R{\dsize}} \; \sup_{\separator \; \in \; \hilbert } \ \ &&  \cost (\repvec) \; + \; \left( \horder^{\frac{1 + 2\horder}{\horder}} \right) \big( \inprod{\separator}{\samplevec} -  \error \inpnorm{\separator} \big) -  \; \inprod{\separator}{\linmap (\repvec)} \; .
\end{aligned}
\end{cases}
\end{equation}
} \end{remark}

\begin{remark} \rm{
If \( \regulizer = 0 \) and we were to replace the error constraint \( \linmap (\repvec) \in \closednbhood{\samplevec}{\error} \) with \( \linmap (\repvec) \in B_{\samplevec} \), where \( B_{\samplevec} \subset \hilbert \) is some compact convex subset. The min-max problem written using \eqref{eq:coding-problem-unconstrained-min-max} is
\[
\begin{cases}
\begin{aligned}
& \min_{
\substack{\repvec \; \in \; \R{\dsize} \\
y \; \in \; B_{\samplevec}}
}
\; \sup_{\separator \; \in \; \hilbert } \ \ &&  \cost (\repvec) \; - \; \inprod{\separator}{\linmap (\repvec)}\; + \; \left( \horder^{\frac{1 + 2\horder}{\horder}} \right) \inprod{\separator}{y}  \; .
\end{aligned}
\end{cases}
\]
} \end{remark}

\subsection{Algorithms}
We propose to solve the LIP \eqref{eq:coding-problem} by computing saddle point solutions using existing algorithms to solve the min-max problems \eqref{eq:coding-problem-primal-dual} or \eqref{eq:coding-problem-unconstrained-min-max}. There are many techniques available for solving such min-max problems, notably among them are the vanilla Gradient Descent Ascent (GDA), Optimistic Gradient Descent Ascent (OGDA), Proximal Point (PP) and Extra Gradient (EG) algorithms. A quick review of these algorithms can be found in \cite{mokhtari2019unified}. The performance of the algorithm to solve LIP depends on the convergence attributes of the algorithm chosen to solve the min-max problems. 

For the constrained formulation \eqref{eq:coding-problem-primal-dual}, Algorithm \ref{algo:constrained-min-max} performs projected gradient descent on the minimization variable \( \hvar \) for each iteration, and it computes gradients by solving the maximization over \( \separator \) by keeping the variable \( \hvar \) fixed. This can also be done alternatively by performing gradient ascent on the maximization variable over a faster time scale and performing projected gradient descent on the minimization variable over a slower time scale. The unconstrained min-max problem is solved in Algorithm \ref{algo:unconstrained-min-max} by performing momentum based Optimistic Gradient Descent Ascent (OGDA) update.\footnote{In the algorithms, \( \linmap^{\dagger} \) indicates the conjugate of the linear map \( \linmap \), \( \pi_{\cost} (\cdot) \) is the projection operator onto the level set \( \costatomset \), and \( \partial \cost (\repvec) \) is the subdifferential of the cost function \( \cost \) evaluated at \( \repvec \) (if the function \( \cost \) is differentiable at \( \repvec \), we slightly abuse the notation and use the same notation \( \partial \cost (\repvec) \) to refer to the gradient).} A comparison between Algorithms \ref{algo:constrained-min-max} and \ref{algo:unconstrained-min-max} for \( \ell_1 \)-minimization problems may be found in Section \ref{subsection:BPDN}. We would like to emphasise that the particular algorithms \ref{algo:constrained-min-max} and \ref{algo:unconstrained-min-max} to solve LIP provided in this chapter are for representation purpose only. In practice, depending on specifics of the given LIP the user has to select the appropriate saddle point seeking algorithm to solve the equivalent min-max problems.


\begin{algorithm}[H]
\caption{Projected gradient descent algorithm for constrained min-max problem \eqref{eq:coding-problem-primal-dual}.}
\label{algo:constrained-min-max}
\KwIn{Problem data: \( \samplevec, \ \linmap , \ \error , \ \regulizer , \ \cost \).}
\KwOut{An optimal solution \( \repvec \in \codes \) and the optimal value \( \samplecost \).}

\nl Proceed only if \( \inpnorm{\samplevec} > \error \), else output \( 0 \).

\nl Initialize \( t = 0 \), \( \hvar_0 \) and \( \separator_0 \).

\nl \textbf{Iterate till convergence}

\quad Initialise \( \separator \; = \; \separator_{t} \) and iterate \( M \) times
\[
\separator \longleftarrow \separator \; + \;  \alpha \left( \frac{\horder^{\frac{1}{1+\horder}} \big( \samplevec - \frac{\error}{\inpnorm{\separator}} \separator \big)}{\big( \inprod{\separator}{\samplevec} - \error \inpnorm{\separator} \big)^{\frac{1}{1 + \horder}}} \; - \; \frac{\regulizer}{\inpnorm{\separator}} \separator  \; - \; \linmap (\hvar_t) \right)
\]

\quad \emph{Update} : \( \hvar_{t + 1} = \pi_{\cost} \left(  \hvar_t \; + \; \beta_t \big( \linmap^{\dagger} (\separator) \big) \right) \) and \( \separator_{t + 1} = \separator \)

\quad \( t \longleftarrow t + 1 \)

\nl \textbf{Repeat}

\nl \textbf{Output}: \( \samplecost = \inprod{\separator_t}{\linmap (\hvar_t)} \) and \( \repvec = \samplecost \cdot \hvar_t \).

\end{algorithm}

\begin{algorithm}[h]
\caption{OGDA algorithm for unconstrained min-max problem \eqref{eq:coding-problem-unconstrained-min-max} when \( \regulizer = 0 \).}
\label{algo:unconstrained-min-max}
\KwIn{Problem data: \( \samplevec, \ \linmap , \ \error , \ \cost \).}
\KwOut{An optimal solution \( \repvec \in \codes \).}

\nl Proceed only if \( \inpnorm{\samplevec} > \error \), else output \( 0 \).

\nl Initialize \( t = 0 \), \( ( \repvec_0 , \; \separator_0 ) \) and \( ( \Gamma_{-1} , \gamma_{-1} ) \).

\nl \textbf{Iterate till convergence}

\quad Compute \( \gamma_{t} \; \in \; \Big( \partial \cost (\repvec_t) - \linmap^{\dagger} (\separator_t) \Big) \) and \( \Gamma_{t} \; = \; \Big( \samplevec - \linmap (\repvec_t) - \frac{\error}{\inpnorm{\separator_t}} \separator_t \Big) \) .

\quad \emph{Update} :
\begin{equation*}
\begin{aligned}
\repvec_{t + 1} \; & = \; \repvec_{t} \; - \; \alpha'_t \Big( 2 \gamma_{t} \; - \; \gamma_{t - 1} \Big) \\
\separator_{t + 1} \; & = \; \separator_{t} \; + \; \alpha'_{t} \Big( 2 \Gamma_{t} \; - \; \Gamma_{t - 1} \Big) 
\end{aligned}
\end{equation*}

\quad \( t \longleftarrow t + 1 \)

\nl \textbf{Repeat}

\nl \textbf{Output}: \( \repvec_t \).

\end{algorithm}

\subsection{Applications to standard problems}
In this section, we will discuss two linear inverse type problems namely \emph{Basis Pursuit Denoising} (BPDN) and the \emph{Quadratic Program} (QP). We will discuss the corresponding min-max forms for these problems, and implement the corresponding algorithms on image denoising problems. Finally, we will also discuss how the min-max forms help us in overcoming the ill-posedness of the dictionary learning problem \eqref{eq:DL-fixed-error}.

\subsubsection*{Basis Pursuit Denoising (BPDN)}
\label{subsection:BPDN}
One of the most practical example of an LIP is the classical \emph{Basis Pursuit Denoising} problem \cite{elad2006image, candes2008introduction}, which arises in various scenarios of compressed sensing and image processing like denoising, deblurring etc.
\begin{equation}
\label{eq:basis-pursuit-denoising}
\begin{cases}
\begin{aligned}
		& \minimize_{\repvec \; \in \; \R{\dsize}} && \norm{\repvec}{1} \\
		& \sbjto && \norm{ \samplevec - \linmap (\repvec) }{2} \leq \error .
\end{aligned}
\end{cases}
\end{equation}
In such image processing problems, instead of solving the problem directly on the entire image, it is often done on a collection of smaller patches (typically of size \( 8 \times 8 \)) that cover the entire image. Since natural images are reasonably sparse in 2d-DCT basis, a common choice for the linear map \( \linmap \) is the inverse 2d-DCT operator for \( 8 \times 8 \) images. 

We implement BPDN based image denoising on two images by solving \eqref{eq:basis-pursuit-denoising} for all non overlapping patches of size \( 8 \times 8 \). Figure \ref{fig:cameraman} shows the denoising results for the standard cameraman image, which is of size \( 256 \times 256 \). From left to right, we have the original image, noisy image and the recovered image in order. The noisy image is obtained by adding a mean zero Gaussian noise of standard deviation 0.0065 using \textsf{imnoise} function in MATLAB resulting in a PSNR of 22.0741dB. To recover the image, BPDN was solved with \( \error = 0.3 \) for every non-overlapping \( 8 \times 8 \) patch using Algorithm \ref{algo:constrained-min-max} to compute the saddle point of the equivalent constrained min-max problem 
\begin{equation}
\label{eq:BPDN-constrained}
\begin{cases}
\begin{aligned}
& \min_{ \norm{\hvar}{1} \leq 1} \; \sup_{\separator } \ \ && 2 \sqrt{\separator\transp \samplevec - \error \norm{\separator}{2}} \; - \; \separator\transp \linmap (\hvar) \\
& \sbjto &&  \separator\transp \samplevec - \error \norm{\separator}{2} > 0 \; .
\end{aligned}
\end{cases}
\end{equation}
The recovered image has a PSNR value of 26.8119dB.

\begin{figure}[h]
\centering
\includegraphics[scale = 0.5]{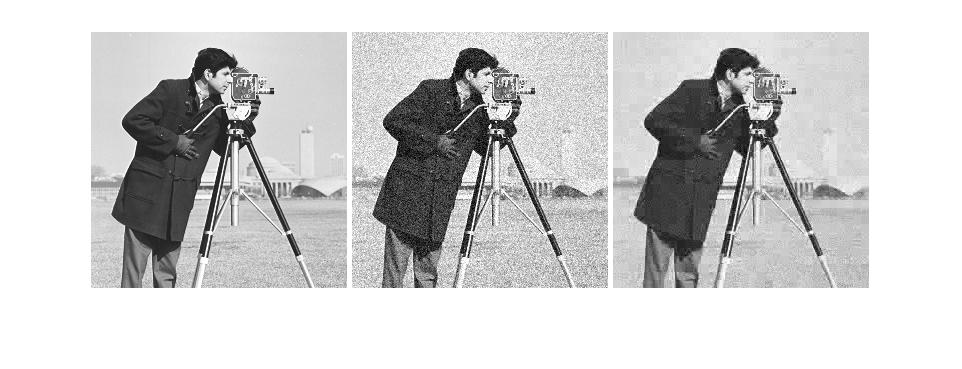}			
\caption{In order from left to right, we have the original image, noisy image, and the recovered image. The image recovery is done by employing Algorithm \ref{algo:constrained-min-max} to solve the BPDN problem \eqref{eq:basis-pursuit-denoising} on each \( 8 \times 8 \) patch. The PSNR value of the noisy image noisy image is 22.0741dB, and that of the recovered image is 26.8119dB, both w.r.t. the original image.}	
\label{fig:cameraman}	
\end{figure}

Similarly, in Figure \ref{fig:flower} the results for denoising the flower image are shown, where the nosiy image is obtained by a adding a mean zero Gaussian noise of standard deviation 0.005, resulting in a PSNR of 23.0954dB. To denoise the image, BPDN was solved with \( \error = 0.385 \) for every non-overlapping \( 8 \times 8 \) patch using Algorithm \ref{algo:unconstrained-min-max} to compute saddle points of the equivalent unconstrained min-max problem
\begin{equation}
\label{eq:BPDN-unconstrained}
\min_{\repvec} \; \sup_{\separator } \quad \norm{\repvec}{1} \; + \; \separator\transp \big( \samplevec - \linmap (\repvec) \big) \; - \; \error \norm{\separator}{2} \; .
\end{equation}
The recovered image has a PSNR value of 28.5362dB.

\begin{figure}[h]
\centering
\includegraphics[scale = 0.4]{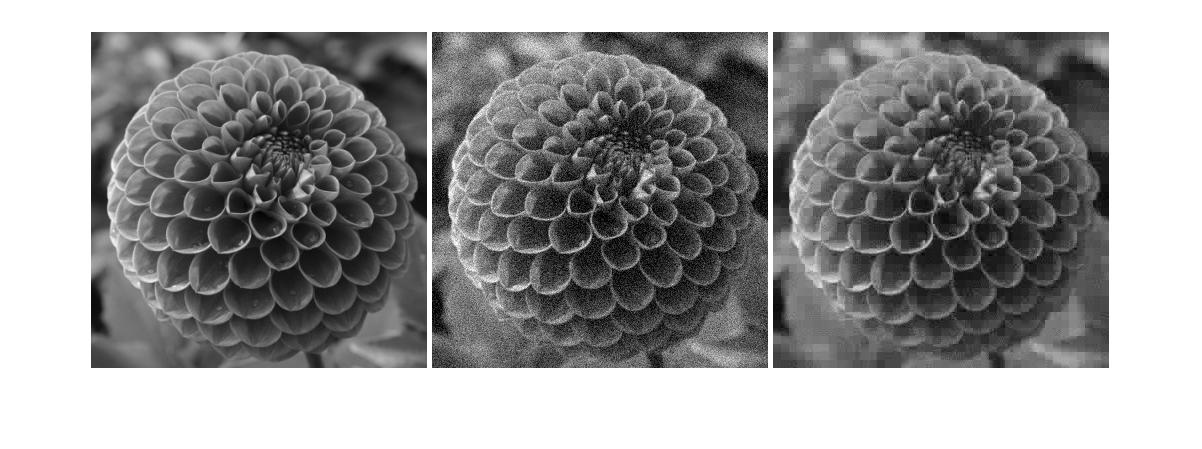}			
\caption{In order from left to right, we have the original image, noisy image, and the recovered image. The image recovery is done by employing Algorithm \ref{algo:unconstrained-min-max} to solve the BPDN problem \eqref{eq:basis-pursuit-denoising} on each \( 8 \times 8 \) patch. The PSNR value of the noisy image noisy image is 23.0954dB, and that of the recovered image is 28.5362dB, both w.r.t. the original image.}	
\label{fig:flower}	
\end{figure}

For comparison purpose, we solve a BPDN problem for a randomly selected \( 8 \times 8 \) patch from the noisy flower image in Figure \ref{fig:flower} via both Algorithm \ref{algo:constrained-min-max} and \ref{algo:unconstrained-min-max}. In Algorithm \ref{algo:constrained-min-max}, we choose the step size sequence \( \alpha_t = 5/(20 + t) \), \( \error = 0.385 \) and for initialisation, \( \hvar_0 = \frac{\linmap^+ (\samplevec)}{\norm{\linmap^+ (\samplevec)}{1}} \) and \( \repvec_0 = \samplevec \). To show convergence to a saddle point in Figure \ref{fig:constrained}, we plot the quantities
\begin{equation}
\begin{aligned}
G_t \; & = \; \inpnorm{ \frac{ \big( \samplevec - \frac{\error}{\inpnorm{\separator_t}} \separator_t \big)}{ \sqrt{ \inprod{\separator_t}{\samplevec} - \error \inpnorm{\separator_t} }} \; - \; \linmap (\hvar_{t - 1}) } \\
g_t \; & = \; \norm{\linmap^{\dagger} (\separator_t)}{\infty} \; - \; \inprod{\separator_t}{\linmap (\hvar_t)} \; .
\end{aligned}    
\end{equation}
Note that \( G_t = 0 \) guarantees the maximization condition, and \( g_t = 0 \) guarantees the minimization condition in the constrained min-max problem \eqref{eq:BPDN-constrained}. Therefore, if \( g_t = G_t = 0 \) for some \( t \), it implies that \( (\separator_t , \hvar_t) \) is a saddle point to \eqref{eq:BPDN-constrained}. Therefore, the sequences \( (g_t)_t \) and \( (G_t)_t \) act as certificates for the saddle point condition for the min-max problem \eqref{eq:BPDN-constrained}. 

\begin{figure}[h]
\centering
\includegraphics[scale = 0.425]{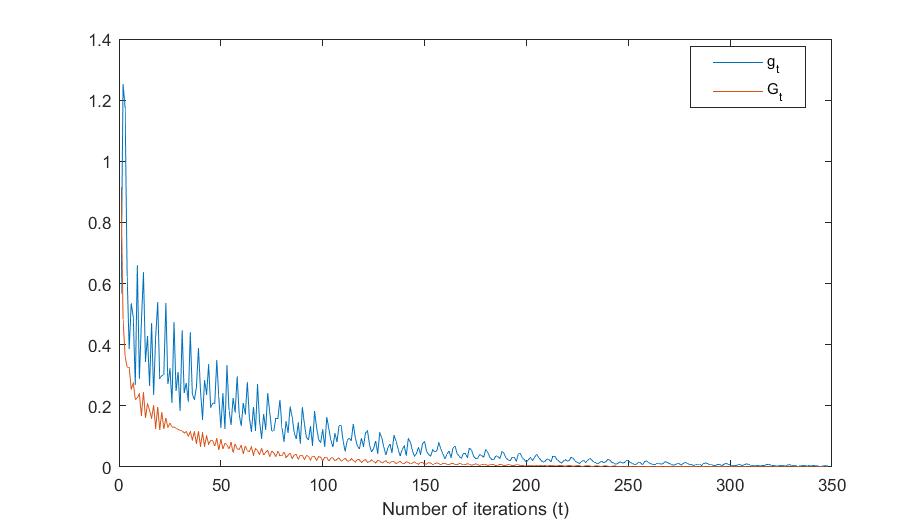}
\caption{Saddle point certificates to solve unconstrained min-max problem \eqref{eq:BPDN-constrained} via Algorithm \ref{algo:constrained-min-max}.}	
\label{fig:constrained}
\end{figure}

Similarly, to compute a saddle point of the unconstrained min-max problem \eqref{eq:BPDN-unconstrained} via Algorithm \ref{algo:unconstrained-min-max}, we use the step size sequence \( \alpha_t = 2.5/(10 + t) \), \( \error = 0.385 \) and for initialisation, \( \repvec_0 = \linmap^+ (\samplevec) \) and \( \separator_0 = \samplevec \). To show convergence to a saddle point in Figure \ref{fig:unconstrained}, we have plot the certificates
\begin{equation}
\begin{aligned}
G'_t \; & = \; \alpha_t \inpnorm{ 2\Gamma_t -  \Gamma_{t - 1} } \\
g'_t \; & = \; \alpha_t \inpnorm{ 2 \gamma_t - \gamma_{t - 1} } .
\end{aligned}
\end{equation}

\begin{figure}[h]
\centering
\includegraphics[scale = 0.425]{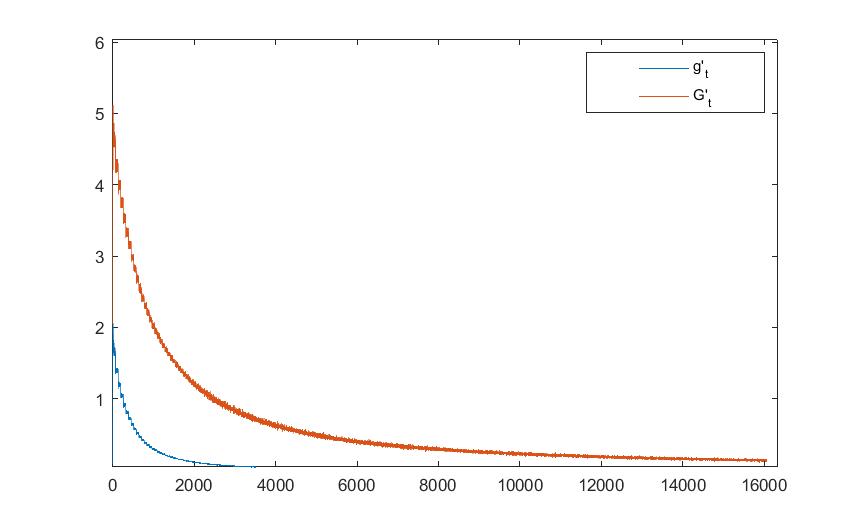}
\caption{Saddle point certificates to solve unconstrained min-max problem \eqref{eq:BPDN-unconstrained} via Algorithm \ref{algo:unconstrained-min-max}.}
\label{fig:unconstrained}
\end{figure}
Evidently, the constrained formulation requires fewer iterations to converge to a reasonable solution, in comparison to the number of iterations required to solve \eqref{eq:BPDN-unconstrained}. However, each iteration to solve \eqref{eq:BPDN-constrained} involves computing projections onto \( \ell_1 \)-ball, which makes each iteration expensive.

\subsubsection*{Projection onto Convex sets and Quadratic Programs}
\label{subsec:projection}
Another optimization problem that arises regularly is the Quadratic Program (QP). Particularly, the problem of projecting a given point \( \samplevec \) onto some given compact-convex set \( S \). Projection of points onto convex sets arises in almost every practical optimization problem where the iterates have to be projected onto the respective feasible sets after they are updated using a gradient descent like technique to minimize the cost. We consider the following QP
\begin{equation}
\label{eq:QP}
\minimize_{y \; \in \; S} \quad \inprod{y - \samplevec}{Q (y - \samplevec)} ,
\end{equation}
where \( Q \) is some given positive definite matrix. In terms of new variable \( \repvec \define y - \samplevec \), the QP \eqref{eq:QP} becomes 
\begin{equation}
\label{eq:QP-1}
\minimize_{\repvec \; \in \; S - \{ \samplevec \} } \quad \inprod{\repvec}{Q \repvec} ,
\end{equation}
which is in the form of the LIP \eqref{eq:coding-problem} with \( \regulizer = 0 \), \( \cost (\repvec) = \inprod{\repvec}{Q \repvec} \), \( \linmap \) being the identity map and the constraint \( \repvec \in S - \{ \samplevec \} \) instead of \( \repvec \in \closednbhood{\samplevec}{\error} \). Using the min-max reformulation \eqref{eq:coding-problem-primal-dual} of the LIP, and replacing the quantity \( \inprod{\separator}{\samplevec} - \error \inpnorm{\separator} \) with \( \min\limits_{y' \; \in \; S  - \{ \samplevec \} } \inprod{\separator}{y'} \) we obtain the following equivalent min-max formulation for the QP \eqref{eq:QP}.
\begin{equation}
\label{eq:qp-min-max}
\begin{cases}
\begin{aligned}
& \min_{\inprod{h}{Q h} \; \leq \; 1} \ \max_{\separator} && 1.889 \Big( \min_{y \; \in \; S} \ \inprod{\separator}{y - \samplevec} \Big)^{2/3} - \; \inprod{\separator}{h} \\
& \sbjto && 0 < \min_{y \; \in \; S} \ \inprod{\separator}{y - \samplevec} .
\end{aligned}
\end{cases}
\end{equation}
Since the set \( S \) is compact, the QP \eqref{eq:QP} always admits a solution and the corresponding min-max problem \eqref{eq:qp-min-max} admits a saddle point solution \( (h\opt , \separator\opt) \). From first order conditions, we know that the optimal value of the min-max problem is \( 0.5 \inprod{\separator\opt}{\hvar\opt} \). Then, simple algebra shows that the optimal solution \( \repvec\opt \) to \eqref{eq:QP-1} is given by \( \big( 0.5 \inprod{\separator\opt}{\hvar\opt} \big) \hvar\opt \), and the optimal solution \( y\opt \) to the QP \eqref{eq:QP} is \( y\opt = x + \repvec\opt \).

\begin{algorithm}[h]
\caption{Projected gradient descent algorithm for constrained min-max problem \eqref{eq:coding-problem-primal-dual}.}
\label{algo:QP}
\KwIn{Problem data: \( \samplevec \), \( Q \) and the set \( S \).}
\KwOut{The solution \( y\opt \) to QP \eqref{eq:QP}.}

\nl Initialize \( t = 0 \) and \( \separator_0 \).

\nl \textbf{Iterate till convergence}

\quad Compute \( y_{t} \; \in \; \argmin\limits_{y \; \in \; S} \ \inprod{\separator_{t}}{y} \) , and 
\begin{equation*}
\begin{aligned}
\separator_{t + 1} & = \separator_{t} \; + \; \alpha_t \left( \frac{1.26 \big( y_{t} - \samplevec \big)}{ \inprod{\separator_t}{y_{t} - \samplevec}^{1/3}}  \; - \; \hvar_t \right) \\
\hvar_{t + 1} & = \pi_{Q} \big( \hvar_t + \alpha_t \separator_t \big)
\end{aligned}
\end{equation*}

\quad \( t \longleftarrow t + 1 \)

\nl \textbf{Repeat}

\nl \textbf{Output}: \( y\opt = \samplevec + \frac{1}{2} Q^{-1} \separator_t \).

\end{algorithm}

The advantage lies in the fact that the projection map \( \pi_{Q} : \hilbert \longrightarrow \hilbert \) onto the level set \( \{ \hvar : \inprod{\hvar}{Q \hvar} \leq 1 \} \) is a lot simpler than projecting onto the given set \( S \) itself. For example, if \( Q \) is identity matrix, then \( \pi_{Q} (\hvar) \) simply normalizes \( \hvar \) to have unit norm. Since in most relevant cases, the function \( \separator \longmapsto \min\limits_{y \in S} \; \inprod{\separator}{y} \) is not differentiable, the step sizes \( \alpha \) have to be diminishing for the subgradient descent-ascent to converge.

Alternatively, since the minimization \( \min\limits_{\inprod{\hvar}{Q \hvar} \leq 1} \ \inprod{\separator}{h} \) admits a unique solution for every \( \separator \), the saddle point solution to the min-max problem can be computed by solving the following maximization problem alone.
\begin{equation}
\sup_{\separator} \quad 1.889 \; \Big( \min_{y \; \in \; S} \ \inprod{\separator}{y - \samplevec} \Big)^{2/3} - \inpnorm{Q^{-\frac{1}{2}}\separator} ,
\end{equation}
using subgradient ascent type algorithms. If \( (\hvar\opt , \separator\opt) \) is a saddle point solution to the min-max problem \eqref{eq:qp-min-max}, then
\[
\hvar\opt = \frac{ Q^{-1} \separator\opt }{\sqrt{ \inprod{\separator\opt}{Q^{-1} \separator\opt} }} \; = \; \max_{\inprod{\hvar}{Q \hvar} \leq 1} \; \inprod{\separator\opt}{\hvar}  .
\]
Finally, the optimal solution \( y\opt \) to the QP \eqref{eq:QP} is \( y\opt = \samplevec + \frac{1}{2} Q^{-1} \separator\opt \).

\begin{figure}[h]
\centering
\includegraphics[scale = 0.425]{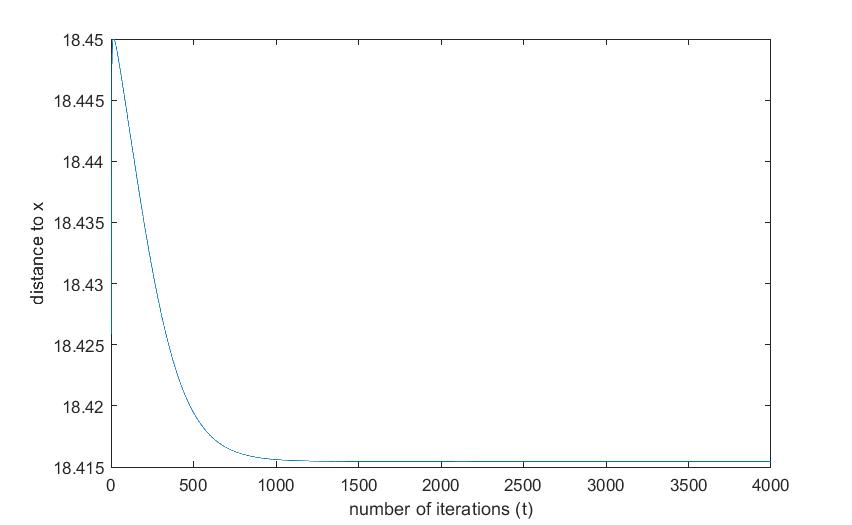}
\caption{For projecting a point \( \samplevec \) onto the \( \ell_1 \)-ball, the plot shows the distance between the iterates (\( \samplevec + 0.5 \separator_t \)) in Algorithm \ref{algo:QP} to the original point \( \samplevec \).}	
\label{fig:QP}	
\end{figure}

As an exercise, we solve the orthogonal projection problem of projecting a point \( \samplevec \) onto the \( \ell_1 \)-ball, which if we recall, arises often while using Algorithm \ref{algo:constrained-min-max} to solve the LIP when \( \cost (\cdot) = \norm{\cdot}{1} \). The point \( \samplevec \) is drawn uniformly randomly from the \( \ell_{\infty} \)-ball of \( \R{1000} \), and we solve \eqref{eq:QP} with \( Q \) being the identity operator using Algorithm \ref{algo:QP}. Figure \ref{fig:QP} shows the progressive distance between the original point \( \samplevec \) and the computed projections \( \samplevec + 0.5 \separator_t \) at each iteration of Algorithm \ref{algo:QP}.

\subsubsection*{The dictionary learning problem}
The setup is that every vector \( \samplevec \in \hilbert \) is \emph{encoded} as a vector \( \repvec ( \samplevec) \) in \( \R{\dsize} \) via the \emph{encoder} map \( \repvec : \hilbert \longrightarrow \R{\dsize} \). We shall refer to \( \repvec (\samplevec) \) as the \emph{representation} of \( \samplevec \) under the encoder \( \repvec \). The \emph{reconstruction} of the encoded samples from the representation \( \repvec (\samplevec) \) is done by taking the linear combination \( \sum\limits_{i = 1}^{\dsize} \repvec_i (\samplevec) \dict{i} \) with some standard collection of vectors \( \dictionary \coloneqq \pmat{\dict{1}}{\dict{2}}{\dict{\dsize}} \) referred to as the \emph{dictionary}. Since the reconstruction has to be a good representative of the true vector \( \samplevec \), we constraint the error \( \inpnorm{\samplevec - \dictionary \repvec(\samplevec)} \) to be small.

Given a dictionary \( \dictionary \), we encode every vector \( \samplevec \) by solving the LIP \eqref{eq:coding-problem} with an appropriate cost function \( \cost \). This cost function determines the desirable characteristics in the representation. In other words, the optimal encoder map \( \repvec_{\dictionary} : \hilbert \longrightarrow \R{\dsize} \) corresponding to the dictionary \( \dictionary \) is such that \( \repvec_{\dictionary} (\samplevec) \in \encodermap{\dictionary}{\samplevec}{\error}{\regulizer} \) for every \( \samplevec \). Our objective is to find dictionaries such that the corresponding encoder map \( \repvec_{\dictionary} \) has desirable features like sparsity, robustness with respect to loss of coefficients etc., in the representation. We refer to the task of finding such a dictionary as the \emph{dictionary learning problem}. 

Formally, let \( \PP \) be a distribution on \( \hilbert \) and \( \rv \) be a \( \PP \) distributed random variable. Let \( \cost : \R{\dsize} \longrightarrow \R{}_+ \) be a given cost function that satisfies Assumption \ref{assumption:cost-function}, \( \error : \hilbert \longrightarrow [0, +\infty[ \) be a given error threshold function and \( \regulizer \) be a non-negative real number. Given a dictionary \( \dictionary \), since the random variable \( \rv \) is encoded as \( \repvec_{\dictionary} (\rv) \in \encodermap{\dictionary}{\rv}{\error (\rv)}{\regulizer} \), we consider the cost incurred to encode to be \( \encodedcost{\dictionary}{\rv}{\error (\rv)}{\regulizer} \). Our objective is to find a dictionary that facilitates optimal encoding of the data, which are the samples drawn from \( \PP \). Therefore, we consider the following dictionary learning problem :
\begin{equation}
\label{eq:dictionary-learning-problem}
        \minimize_{\dictionary \; \in \; \unitdictionaryset} \quad \EE_{\PP} \big[ \encodedcost{\dictionary}{\rv}{\error (\rv)}{\regulizer} \big] ,
\end{equation}
where \( \unitdictionaryset \subset \R{\dimension \times \dsize} \) is some known compact convex subset. 

For a large integer \( \horizon \), let \( \data \coloneqq (\sample{t})_{t = 1}^{\horizon} \) be a collection of samples drawn from the distribution \( \PP \). Let us consider the dictionary learning problem for the sampled data, given by:
\begin{equation}
\label{eq:dictionary-learning-samples}
\minimize_{\dictionary \; \in \; \unitdictionaryset} \ \frac{1}{\horizon} \summ{t = 1}{\horizon} \encodedcost{\dictionary}{\sample{t}}{\error (\sample{t})}{\regulizer} \ .
\end{equation}
For the special case of \( \regulizer = 0 \), the dictionary learning problem \eqref{eq:dictionary-learning-samples} can be restated using the definition of the encoding cost \( \encodedcost{\dictionary}{\sample{t}}{\error (\sample{t})}{\regulizer} \)  in the more conventional form as:
\begin{equation}
\label{eq:DL-conventional}
\begin{cases}
\begin{aligned}
        &\minimize_{\dictionary , \; (\repvec_t)_t}  && \frac{1}{\horizon} \summ{t = 1}{\horizon} \cost (\repvec_t) \\
	    & \sbjto			&&
	    \begin{cases}
	       \dictionary \in \unitdictionaryset , \\
	       \repvec_t \in \R{\dsize} , \\
	       \inpnorm{ \sample{t} - \dictionary \repvec_t } \leq \error (\sample{t}) \text{ for all } t = 1,2,\ldots, \horizon .
	    \end{cases}
	\end{aligned}
\end{cases}
\end{equation}

The dependence of the encoding cost \( \encodedcost{\dictionary}{\samplevec}{\error}{\regulizer} \) on the dictionary variable \( \dictionary \) is not immediately evident. Therefore, it is replaced in the dictionary learning problem \eqref{eq:dictionary-learning-samples} with the min-max problem provided in Corollary \ref{corollary:encoding-cost-min-max-problem} to obtain
\[
\min_{\dictionary \in \unitdictionaryset} \ \min_{ (\hvar_t)_t \subset \costatomset }
\begin{cases}
\begin{aligned}
& \sup_{(\separator_t)_t} && \frac{1}{T} \summ{t = 1}{T} \Big( r(\horder) \big( \inprod{\separator_t}{\samplevec_t} -  \error \inpnorm{\separator_t} \big)^{\frac{\horder}{1 + \horder}} - \big( \regulizer \inpnorm{\separator_t} + \inprod{\separator_t}{\dictionary \hvar_t} \big) \Big) \\
& \text{ s.t.} && \inprod{\separator_t}{\samplevec_t} - \error \inpnorm{\separator_t} > 0 ,
\end{aligned}
\end{cases}
\]
where \( r(\horder) = (1 + \horder) \horder^{\frac{ - \horder}{1 + \horder}} \). The dictionary learning problem \eqref{eq:dictionary-learning-samples} is then solved by alternating the optimization over \( \dictionary \) and \( (\hvar_t)_t \) keeping the other one fixed. It is to be noted that each of these optimization problem is a min-max problem in variables \( (\dictionary , (\separator_t)_t ) \) and \( ( (\hvar_t)_t , (\separator_t)_t) \) respectively. In particular, for a given sequence \( (\hvar_t)_t \subset \costatomset \), the dictionary is updated by solving the following min-max problem
\begin{equation*}
\begin{cases}
\begin{aligned}
& \min_{\dictionary \; \in \; \unitdictionaryset} \ \sup_{(\separator_t)_t} && \frac{1}{T} \summ{t = 1}{T} \Big( r(\horder) \big( \inprod{\separator_t}{\samplevec_t} -  \error \inpnorm{\separator_t} \big)^{\frac{\horder}{1 + \horder}} - \big( \regulizer \inpnorm{\separator_t} + \inprod{\separator_t}{\dictionary \hvar_t} \big) \Big) \\
& \sbjto && \inprod{\separator_t}{\samplevec_t} - \error \inpnorm{\separator_t} > 0 .
\end{aligned}
\end{cases}
\end{equation*}
It is shown in \cite{sheriff2019dictionary} that if \( \regulizer > 0 \) the above min-max problem always admits a saddle point solution. Moreover, we observe the objective function of the min-max problem is linear w.r.t. the dictionary variable \( \dictionary \) and concave w.r.t. \( \separator \). Thus, the saddle point solution can be computed efficiently by simple ascent-descent type iterations. The novelty and the convergence attributes of learning a dictionary to solve \eqref{eq:dictionary-learning-samples} can be attributed to the reformulations \eqref{eq:coding-problem-primal-dual}, \eqref{eq:coding-problem-unconstrained-min-max} of the LIP provided in this article.

\section{Theory, discussion, and proofs.}
\label{section:convex-geometry}
In this section we investigate the linear inverse problem \eqref{eq:coding-problem} in detail and its dual with special emphasis on the underlying convex geometry. Based on the principle of separation of convex bodies by linear functionals, we obtain the dual problem \eqref{eq:coding-problem-equivalent-sup-problem} of the linear inverse problem which then leads to the convex-concave min-max problem \eqref{eq:coding-problem-equivalent-sup-problem}. We later provide the proof of Theorem \ref{theorem:coding-problem-primal-dual} establishing that the optimal value of this min-max problem is proportional to the optimal cost \( \samplecost \) of the LIP \eqref{eq:coding-problem}.

\begin{lemma}
For a linear map \( \linmap : \R{\dsize} \longrightarrow \hilbert \) and \( \regulizer, \scaling \geq 0 \), let \( \atomset{\scaling} \define \scaling \cdot \atomball \), we have
\begin{equation}
\label{eq:atomic-set-encoding-cost-equivalence}
\atomset{\scaling} = \{ z \in \hilbert : \encodedcost{\linmap}{z}{0}{\regulizer} \leq \scaling^{\horder} \}.
\end{equation}
\end{lemma}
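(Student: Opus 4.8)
The plan is to unwind both sides of \eqref{eq:atomic-set-encoding-cost-equivalence} into a common elementary condition on the existence of a suitable $f \in \R{\dsize}$, and then to pass between the two descriptions by rescaling $f$ via the positive homogeneity of $\cost$.

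First I would simplify the right-hand side. By definition $\encodedcost{\linmap}{z}{0}{\regulizer}$ is the optimal value of the LIP \eqref{eq:coding-problem} with $\samplevec = z$ and $\error = 0$, whose feasible pairs $(\mathsf{c}, f)$ are exactly those satisfying $(\cost(f))^{1/\horder} \le \mathsf{c}$ and $\inpnorm{z - \linmap(f)} \le \regulizer \mathsf{c}$. For a fixed $f$, both constraints are monotone (weaken) in $\mathsf{c}$, so the set of feasible $\mathsf{c}$-values is an up-set of $[0, +\infty[$; since the excerpt already records that \eqref{eq:coding-problem} attains its optimum whenever feasible (Weierstrass, via coercivity and closedness of the feasible set), this up-set equals $[(\encodedcost{\linmap}{z}{0}{\regulizer})^{1/\horder}, +\infty[$. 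Hence $\encodedcost{\linmap}{z}{0}{\regulizer} \le \scaling^{\horder}$ holds if and only if $\scaling$ is itself a feasible $\mathsf{c}$-value, i.e. if and only if there exists $f \in \R{\dsize}$ with $(\cost(f))^{1/\horder} \le \scaling$ and $\inpnorm{z - \linmap(f)} \le \regulizer \scaling$. (The ``if'' direction is immediate by taking $(\mathsf{c}, f) = (\scaling, f)$, which is feasible with objective value $\scaling^{\horder}$.)

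Next I would split on $\scaling = 0$ versus $\scaling > 0$. When $\scaling = 0$, the condition above forces $\cost(f) = 0$, hence $f = 0$ by inf-compactness and positive homogeneity (the implication $\cost(f) = 0 \Rightarrow f = 0$ already used for \eqref{eq:zero-approximate-sample}), and then $z = \linmap(0) = 0$; since $0 \in \atomball$, this matches $\atomset{0} = 0 \cdot \atomball = \{0\}$. When $\scaling > 0$, I substitute $g \define f/\scaling$: positive homogeneity of order $\horder$ gives $\cost(f) = \scaling^{\horder} \cost(g)$, so $(\cost(f))^{1/\horder} \le \scaling \iff \cost(g) \le 1 \iff g \in \costatomset$, while $\inpnorm{z - \linmap(f)} \le \regulizer \scaling \iff \inpnorm{z/\scaling - \linmap(g)} \le \regulizer$. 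Therefore the condition from the previous step is equivalent to the existence of $g \in \costatomset$ with $\inpnorm{z/\scaling - \linmap(g)} \le \regulizer$, which is precisely $z/\scaling \in \atomball$, i.e. $z \in \scaling \cdot \atomball = \atomset{\scaling}$. Combining the two cases yields $\atomset{\scaling} = \{z \in \hilbert : \encodedcost{\linmap}{z}{0}{\regulizer} \le \scaling^{\horder}\}$.

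The only mildly delicate point is the first step: one must not conflate the infimum bound $\encodedcost{\linmap}{z}{0}{\regulizer} \le \scaling^{\horder}$ with the existence of a feasible point of value $\le \scaling^{\horder}$, and it is exactly here that attainment of the optimum (equivalently, the monotonicity-in-$\mathsf{c}$ up-set argument together with closedness of the feasible set) is needed. Everything else is a direct rescaling.
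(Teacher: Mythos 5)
Your proposal is correct and follows essentially the same route as the paper: the ``if'' direction instantiates \( (\mathsf{c}, \repvec) = (\scaling, \repvec) \) to certify feasibility, and the ``only if'' direction uses attainment of the optimum together with monotonicity of both constraints in \( \mathsf{c} \) to upgrade the bound \( \encodedcost{\linmap}{z}{0}{\regulizer} \leq \scaling^{\horder} \) to an explicit witness \( \repvec \) with \( \cost(\repvec) \leq \scaling^{\horder} \) and \( \inpnorm{z - \linmap(\repvec)} \leq \regulizer \scaling \), which is membership in \( \atomset{\scaling} \) after rescaling. Your explicit treatment of the \( \scaling = 0 \) case and the up-set framing are harmless refinements of what the paper does implicitly.
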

\begin{proof}
On the one hand, it follows from the definition \eqref{eq:atomic-ball-definition} of \( \atomball \) that for every \( z \in \atomset{\scaling} \), there exists \( \repvec_z \in \R{\dsize} \) such that \( \cost (\repvec_z) \leq \scaling^{\horder} \) and \( \inpnorm{z - \linmap (\repvec_z)} \leq \regulizer \scaling \). Thus, considering \( \error = 0 \) in \eqref{eq:coding-problem}, we see that the pair \( (\scaling, \repvec_z) \) is a feasible point and hence we have \( \encodedcost{\linmap}{z}{0}{\regulizer} \leq \scaling^{\horder} \). 

On the other hand, if \( z \in \hilbert \) is such that \( \encodedcost{\linmap}{z}{0}{\regulizer} \leq \scaling^{\horder} \), we know that there exists a pair \( (\mathsf{c}_z, \repvec_z) \in \R{}_+ \times \R{\dsize} \) such that \( \mathsf{c}_z^{\horder} = \encodedcost{\linmap}{z}{0}{\regulizer} \leq \scaling^{\horder} \) and satisfies the following:
\begin{itemize}
    \item \( \cost (\repvec_z) \leq \mathsf{c}_z^{\horder} \leq \scaling^{\horder} \), and
    \item \( \inpnorm{z - \linmap (\repvec_z)} \leq 0 + \regulizer \mathsf{c}_z \leq \regulizer \scaling \).
\end{itemize}
It then immediately follows that for every \( z \in \hilbert \) satisfying \( \encodedcost{\linmap}{z}{0}{\regulizer} \leq \scaling^{\horder} \), we have the membership \( z \in \atomset{\scaling} \). Collecting the two assertions we arrive at \eqref{eq:atomic-set-encoding-cost-equivalence}.
\end{proof}

It is easy to see that \( \atomball = \bigcup\limits_{h \in \costatomset} \closednbhood{\linmap (h)}{\regulizer} \). Thus, if \( \regulizer > 0 \), the set \( \atomball \) has non-empty interior, and is therefore an absorbing set of \( \hilbert \).\footnote{A set \( S \) is an absorbing set of a vector space \( H \) if forevery \( z \in H \) there exists \( \scaling_z \geq 0 \) such that \( z \in \scaling_z \cdot S \).} When \( \regulizer = 0 \), we immediately see that \( \atomset{\scaling} \subset \image (\linmap) \) for every \( \scaling \geq 0 \). Furthermore, for every \( z \in \image (\linmap) \) we know that there exists \( \repvec \in \R{\dsize} \) such that \( z = \linmap (\repvec) \), and therefore \( z \in S_0{(\linmap , (\cost (\repvec))^{1/\horder}}) \). Consequently, we obtain:
\[
\lim_{\scaling \to +\infty} \atomset{\scaling} = 
\begin{cases}
\begin{aligned}
& \hilbert               && \regulizer > 0 \; , \\
& \image (\linmap)   && \regulizer = 0 \; .    
\end{aligned}
\end{cases}
\]

As the scaling factor \( \scaling \) increases, the set \( \atomset{\scaling} \) scales linearly by absorbing every \( \Depsdelta \)-feasible point in the set \( \hilbert \). In particular, the set \( \closednbhood{\samplevec}{\error} \) eventually intersects with \( \atomset{\scaling} \) for some \( \scaling \geq 0 \). We shall see that the optimal cost \( \samplecost \) is proportional to the minimum amount by which the set \( \atomset{1} \) needs to be scaled so that it intersects with \( \closednbhood{\samplevec}{\error} \).

\begin{lemma}
\label{lemma:coding-problem-reformulation-1}
For a given linear map \( \linmap : \R{\dsize} \longrightarrow \hilbert \) and non-negative real numbers \( \error ,\regulizer \), let \( \samplevec \in \hilbert \) be  \( \Depsdelta \)-feasible in the sense of Definition \ref{def:representability}, then we have
\begin{equation}
\label{eq:lemma:coding-problem-reformulation-1-1}
\samplecost =
\begin{cases}
\begin{aligned}
& \min_{\scaling \geq 0} && \scaling^{\horder} \\
& \sbjto && \atomset{\scaling} \; \cap \; \closednbhood{\samplevec}{\error} \neq \emptyset .
\end{aligned}
\end{cases}
\end{equation}
\end{lemma}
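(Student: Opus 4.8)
The plan is to identify the feasible set of the scalar program on the right-hand side of \eqref{eq:lemma:coding-problem-reformulation-1-1} with the set of values $\mathsf{c}$ for which $(\mathsf{c}, \repvec)$ is feasible in \eqref{eq:coding-problem} for some $\repvec \in \R{\dsize}$. Since both programs minimise the same strictly increasing function $\scaling \mapsto \scaling^{\horder}$ of that single scalar variable, once the feasible sets agree the optimal values coincide; and because $\samplevec$ is $\Depsdelta$-feasible, \eqref{eq:coding-problem} attains its optimum by the Weierstrass argument recalled in the paragraph preceding \eqref{eq:definition-of-encoding-cost-and-codes}, so the outer minimum in \eqref{eq:lemma:coding-problem-reformulation-1-1} is attained and equals $\samplecost$.

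First I would prove that if $(\scaling, \repvec)$ is feasible for \eqref{eq:coding-problem} then $\atomset{\scaling} \cap \closednbhood{\samplevec}{\error} \neq \emptyset$. For $\scaling > 0$, feasibility gives $\cost(\repvec) \leq \scaling^{\horder}$, hence $\repvec / \scaling \in \costatomset$ by $\horder$-homogeneity of $\cost$, and therefore, using $\atomball = \bigcup_{\hvar \in \costatomset} \closednbhood{\linmap(\hvar)}{\regulizer}$ and $\atomset{\scaling} = \scaling \cdot \atomball$, we get $\closednbhood{\linmap(\repvec)}{\regulizer\scaling} = \scaling \cdot \closednbhood{\linmap(\repvec/\scaling)}{\regulizer} \subseteq \atomset{\scaling}$. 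Since $\inpnorm{\samplevec - \linmap(\repvec)} \leq \error + \regulizer\scaling$ by feasibility, the point on the segment from $\linmap(\repvec)$ to $\samplevec$ at distance $\min\{\regulizer\scaling, \inpnorm{\samplevec - \linmap(\repvec)}\}$ from $\linmap(\repvec)$ lies in both $\closednbhood{\linmap(\repvec)}{\regulizer\scaling}$ and $\closednbhood{\samplevec}{\error}$, so the intersection is non-empty. The case $\scaling = 0$ is immediate: feasibility of $(0, \repvec)$ forces $\cost(\repvec) = 0$, hence $\repvec = 0$ by inf-compactness and positive homogeneity, and then $\inpnorm{\samplevec} \leq \error$, so $0 \in \atomset{0} \cap \closednbhood{\samplevec}{\error}$.

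For the converse, let $z \in \atomset{\scaling} \cap \closednbhood{\samplevec}{\error}$. If $\scaling = 0$ then $\atomset{0} = \{0\}$, so $z = 0$, $\inpnorm{\samplevec} \leq \error$, and $(0, 0)$ is feasible for \eqref{eq:coding-problem}. If $\scaling > 0$, write $z = \scaling w$ with $w \in \atomball$, so $w = \linmap(\hvar) + u$ for some $\hvar \in \costatomset$ and some $u$ with $\inpnorm{u} \leq \regulizer$; set $\repvec \define \scaling \hvar$. Then $\cost(\repvec) = \scaling^{\horder} \cost(\hvar) \leq \scaling^{\horder}$, i.e. $(\cost(\repvec))^{1/\horder} \leq \scaling$, and $\inpnorm{z - \linmap(\repvec)} = \scaling\inpnorm{u} \leq \regulizer\scaling$, so by the triangle inequality $\inpnorm{\samplevec - \linmap(\repvec)} \leq \inpnorm{\samplevec - z} + \inpnorm{z - \linmap(\repvec)} \leq \error + \regulizer\scaling$; hence $(\scaling, \repvec)$ is feasible for \eqref{eq:coding-problem}. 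Combining both directions, the feasible scalar values coincide and the identity \eqref{eq:lemma:coding-problem-reformulation-1-1} follows as outlined.

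I do not expect a substantial obstacle: the argument is essentially the change of variable $\repvec \leftrightarrow \repvec/\scaling$ powered by the $\horder$-homogeneity of $\cost$, together with the elementary fact that two closed balls in a normed space meet precisely when the distance between their centres is at most the sum of their radii. The only points needing a little attention are the degenerate instances $\scaling = 0$ and $\regulizer = 0$, where one ball shrinks to a point, and the invocation of Weierstrass to guarantee that the outer ``$\min$'' is genuinely attained. An alternative would be to route everything through the already established identity $\atomset{\scaling} = \{z \in \hilbert : \encodedcost{\linmap}{z}{0}{\regulizer} \leq \scaling^{\horder}\}$, which reduces the claim to $\samplecost = \min_{y \in \closednbhood{\samplevec}{\error}} \encodedcost{\linmap}{y}{0}{\regulizer}$; but that route still requires separately checking that the inner minimum over $\closednbhood{\samplevec}{\error}$ is attained, whereas the direct argument above bypasses it.
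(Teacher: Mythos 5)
Your proposal is correct and follows essentially the same route as the paper's proof: one direction is the identical triangle-inequality argument showing that a feasible pair for \eqref{eq:coding-problem} yields a point in $\atomset{\scaling} \cap \closednbhood{\samplevec}{\error}$ (the paper exhibits the explicit convex combination $\frac{\error \linmap(\repvec) + \regulizer\scaling\samplevec}{\error+\regulizer\scaling}$ where you take the segment point at distance $\min\{\regulizer\scaling, \inpnorm{\samplevec-\linmap(\repvec)}\}$, which is the same two-balls-meet observation), and the converse direction is the same unpacking of $z \in \atomset{\scaling}$ into a feasible pair $(\scaling,\scaling\hvar)$ via homogeneity plus the triangle inequality. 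Your explicit handling of the degenerate cases $\scaling=0$ and of attainment of the outer minimum is slightly more careful than the paper's, but there is no substantive difference.
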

\begin{proof}
Let \( \scaling \geq 0 \) be such that \( \atomset{\scaling} \cap \closednbhood{\samplevec}{\error} \neq \emptyset \). Then on the one hand, there exists \( y_{\scaling} \in\closednbhood{\samplevec}{\error} \) and \( \repvec_{\scaling} \in \R{\dsize} \) such that \( \inpnorm{y_{\scaling} - \linmap (\repvec_{\scaling})} \leq \regulizer \scaling \) and \( \cost (\repvec_{\scaling}) \leq \scaling^{\horder} \). From this we get
\[
\inpnorm{\samplevec - \linmap (\repvec_{\scaling})} \leq \inpnorm{\samplevec - y_{\scaling}} + \inpnorm{y_{\scaling} - \linmap (\repvec_{\scaling})} \leq \error + \regulizer \scaling \; ,
\]
which implies that the pair \( (\scaling , \repvec_{\scaling}) \) is feasible for \eqref{eq:coding-problem}, and as a result we get \( \samplecost \leq \scaling^{\horder} \). By minimizing over \( \scaling \geq 0 \) such that \( \atomset{\scaling} \cap \closednbhood{\samplevec}{\error} \neq \emptyset \) we get our first inequality:
\[
\samplecost \leq 
\begin{cases}
\begin{aligned}
& \min_{\scaling \geq 0} && \scaling^{\horder} \\
& \sbjto && \atomset{\scaling} \; \cap \; \closednbhood{\samplevec}{\error} \neq \emptyset .
\end{aligned}
\end{cases}
\]

On the other hand, for every pair \( (\scaling, \repvec) \) that is feasible for \eqref{eq:coding-problem}, by defining \( y \coloneqq \samplevec \indicator_{\{ 0 \}} (\error) + \frac{\error \linmap (\repvec) + \regulizer \scaling \samplevec}{\error + \regulizer \scaling} \; \indicator_{ ]0, +\infty[ } (\error) \) we shall establish that \( y \in \closednbhood{\samplevec}{\error} \cap \atomset{\scaling} \).

Whenever \( \error = 0 \) we have \( y = \samplevec \), and from the feasibility of the pair \( (\scaling, \repvec) \) it easily follows that \( \inpnorm{y - \linmap (\repvec)} = \inpnorm{\samplevec - \linmap (\repvec)} \leq \error + \regulizer \scaling = \regulizer \scaling \) and \( \cost (\repvec) \leq \scaling^{\horder} \). Thus, \( \encodedcost{y}{\linmap}{0}{\regulizer} \leq \scaling^{\horder} \) and from \eqref{eq:atomic-set-encoding-cost-equivalence} the membership \( y \in \atomset{\scaling} \) holds. Similarly, if \( \error = 0 \), we see that \( y = \samplevec = \closednbhood{\samplevec}{0} \). Therefore, \( y \in \closednbhood{\samplevec}{\error} \cap \atomset{r} \), and the intersection is non-empty.

When \( \error > 0 \), we see that
\[
\begin{aligned}
    & \inpnorm{ \samplevec - y } = \inpnorm{ \samplevec - \frac{\error \linmap (\repvec) + \regulizer \scaling \samplevec}{\error + \regulizer \scaling} } = \frac{\error}{\error + \regulizer \scaling} \inpnorm{ \samplevec - \linmap (\repvec)} \leq \error \; , \text{ and } \\
    & \inpnorm{y - \linmap (\repvec)} = \inpnorm{ \frac{\error \linmap (\repvec) + \regulizer \scaling \samplevec}{\error + \regulizer \scaling} - \linmap (\repvec) } = \frac{ \regulizer \scaling }{\error + \regulizer \scaling} \inpnorm{ \samplevec - \linmap (\repvec)} \leq \regulizer \scaling \; .
\end{aligned}
\]
These inequalities, along with the fact that \( \cost (\repvec) \leq \scaling^{\horder} \) imply that \( y \in \closednbhood{\samplevec}{\error} \cap \atomset{\scaling} \) and in particular that \( \closednbhood{\samplevec}{\error} \cap \atomset{\scaling} \neq \emptyset \). As a consequence, the inequality:
\[
\scaling \geq 
\begin{cases}
\begin{aligned}
& \min_{\scaling \geq 0} && \scaling^{\horder} \\
& \sbjto && \atomset{\scaling} \; \cap \; \closednbhood{\samplevec}{\error} \neq \emptyset ,
\end{aligned}
\end{cases}
\]
holds for every pair \( (\scaling, \repvec) \) that is feasible for \eqref{eq:coding-problem}. By minimizing over all the pairs \( (\scaling, \repvec) \) that are feasible for \eqref{eq:coding-problem}, we obtain the converse inequality
\[
\samplecost \geq 
\begin{cases}
\begin{aligned}
& \min_{\scaling \geq 0} && \scaling^{\horder} \\
& \sbjto && \atomset{\scaling} \; \cap \; \closednbhood{\samplevec}{\error} \neq \emptyset .
\end{aligned}
\end{cases}
\]
This completes the proof.
\end{proof}

\begin{remark} \rm{
An interesting viewpoint to  take from this in dictionary learning problem is that every dictionary \( \dictionary \) gives rise to an atomic set \( S_{\regulizer} (\dictionary, 1) \), and the encoding cost \( \encodedcost{\dictionary}{\samplevec}{\error}{\regulizer} \) of a vector \( \samplevec \) is proportional to the \emph{approximate} Minkowski gauge function with respect to this set.\footnote{We say ``approximate'' in the sense that we do not scale the atomic set \( S_{\regulizer}(\dictionary , 1) \) so as to absorb \( \samplevec \). Instead, we scale it only until it intersects with a given neighborhood of \( \samplevec \). } The corresponding dictionary learning problem can be viewed as the task of finding a `good' atomic set arising from a dictionary.
} \end{remark}

\subsubsection*{Intersection of the convex bodies.}
Lemma \ref{lemma:coding-problem-reformulation-1} gives us the first required connection between the LIP \eqref{eq:coding-problem} and the underlying convex geometry. It asserts that the value \( (\samplecost)^{1/\horder} \) is the minimum amount by which the set \( \atomball \) has to be scaled linearly so that it intersects with \( \closednbhood{\samplevec}{\error} \). To this end, let us define:
\[
\atomscaled \coloneqq (\samplecost)^{1/\horder} \cdot \atomball \; .
\]
We observe that both the sets \( \atomscaled \) and \( \closednbhood{\samplevec}{\error} \) are compact and convex, and due to this, we have the following intersection lemma.
\begin{lemma}
\label{lemma:uniqueness-of-intersection}
Let \( \samplevec \in \hilbert \) be \( \Depsdelta \)-feasible in the sense of Definition \ref{def:representability}. Let \( (\mathsf{c}_{\samplevec}, \repvec_{\samplevec}) \in [0, +\infty[ \times \R{\dsize} \) be an optimal solution to the coding problem \eqref{eq:coding-problem}, i.e., \( \mathsf{c}_{\samplevec} = ( \samplecost )^{1/\horder} \) and \( \repvec_{\samplevec} \in \codes \). Then the sets \( \closednbhood{\samplevec}{\error} \) and \( \atomscaled \) intersect at a unique point \( y\opt \) given by:
\begin{equation}
\label{eq:intersection-lemma}
    \closednbhood{\samplevec}{\error} \cap \atomscaled \; \eqqcolon \; y\opt \; = \;  \samplevec \indicator_{ \{ 0 \} } (\error) \; + \; \frac{\error \linmap (\repvec_{\samplevec}) + \regulizer \mathsf{c}_{\samplevec} \samplevec}{\error + \regulizer \mathsf{c}_{\samplevec}} \; \indicator_{ ]0, +\infty[ } (\error) .
\end{equation}
As a consequence, we assert that
\begin{itemize}
    \item whenever, \( \inpnorm{\samplevec} > \error \), every \( \repvec_{\samplevec} \in \codes \) satisfies \( \inpnorm{\samplevec - \linmap (\repvec_{\samplevec})} = \error + \regulizer \mathsf{c}_{\samplevec} \) ;

    \item for every \( \repvec_{\samplevec} , g_{\samplevec} \in \codes \), we have \( \linmap (\repvec_{\samplevec}) \; = \; \linmap (g_{\samplevec}) \) .
\end{itemize}
\end{lemma}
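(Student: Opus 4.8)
The plan is to extract everything from two ingredients already available: the identification in Lemma~\ref{lemma:coding-problem-reformulation-1} of \( (\samplecost)^{1/\horder} \) as the \emph{smallest} scaling \( \scaling \) for which \( \atomset{\scaling} \) meets \( \closednbhood{\samplevec}{\error} \), and the strict convexity of balls in the Hilbert space \( \hilbert \). I would first dispose of the degenerate case \( \inpnorm{\samplevec} \le \error \): then \( \samplecost = 0 \) by \eqref{eq:zero-approximate-sample}, so \( \atomscaled = \{0\} \), and since \( 0 \in \closednbhood{\samplevec}{\error} \) the intersection is exactly \( \{0\} \); moreover \( \mathsf{c}_{\samplevec} = 0 \) forces \( \cost(\repvec_{\samplevec}) = 0 \) and hence \( \repvec_{\samplevec} = 0 \) (inf-compactness together with positive homogeneity), so the right-hand side of \eqref{eq:intersection-lemma} is \( 0 \) too. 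Thus assume henceforth \( \inpnorm{\samplevec} > \error \), whence \( \mathsf{c}_{\samplevec} \coloneqq (\samplecost)^{1/\horder} > 0 \). Non-emptiness of \( \closednbhood{\samplevec}{\error} \cap \atomscaled \) is then immediate from Lemma~\ref{lemma:coding-problem-reformulation-1} with \( \scaling = \mathsf{c}_{\samplevec} \); and to see that the explicit point \( y\opt \) of \eqref{eq:intersection-lemma} lies in this intersection I would simply repeat the computation in the proof of Lemma~\ref{lemma:coding-problem-reformulation-1}, splitting on \( \error = 0 \) versus \( \error > 0 \): the feasibility inequalities \( \cost(\repvec_{\samplevec})^{1/\horder} \le \mathsf{c}_{\samplevec} \) and \( \inpnorm{\samplevec - \linmap(\repvec_{\samplevec})} \le \error + \regulizer \mathsf{c}_{\samplevec} \) give \( \inpnorm{\samplevec - y\opt} \le \error \) and \( \inpnorm{y\opt - \linmap(\repvec_{\samplevec})} \le \regulizer \mathsf{c}_{\samplevec} \), so that \( y\opt \in \closednbhood{\samplevec}{\error} \) and, via \eqref{eq:atomic-set-encoding-cost-equivalence}, \( y\opt \in \atomset{\mathsf{c}_{\samplevec}} = \atomscaled \).

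The hard part will be uniqueness of the intersection point. The key intermediate claim is that \emph{every} \( y \in \closednbhood{\samplevec}{\error} \cap \atomscaled \) satisfies \( \inpnorm{\samplevec - y} = \error \). Indeed \( y \ne 0 \) since \( 0 \notin \closednbhood{\samplevec}{\error} \), so if we had \( \inpnorm{\samplevec - y} < \error \) then \( (1-\alpha)y \) would still lie in \( \closednbhood{\samplevec}{\error} \) for all small \( \alpha > 0 \), while \( (1-\alpha)y \in (1-\alpha)\mathsf{c}_{\samplevec} \cdot \atomball = \atomset{(1-\alpha)\mathsf{c}_{\samplevec}} \), contradicting the minimality of \( \mathsf{c}_{\samplevec} \) in Lemma~\ref{lemma:coding-problem-reformulation-1} (this claim is vacuous when \( \error = 0 \), the ball being the singleton \( \{\samplevec\} \)). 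Granting this, if \( y_1 \ne y_2 \) were two points of the intersection, their midpoint \( \bar y \coloneqq \tfrac12(y_1 + y_2) \) would lie in \( \atomscaled \) by convexity, yet strict convexity of the Hilbert ball \( \closednbhood{\samplevec}{\error} \) gives \( \inpnorm{\samplevec - \bar y} < \error \), contradicting the intermediate claim. Hence the intersection is a singleton, and by the previous paragraph it equals \( y\opt \); this is \eqref{eq:intersection-lemma}. The only delicate point is recognising that strict convexity of the Hilbert ball, married to the minimality of the optimal scaling, is precisely what pins every intersection point to the sphere \( \{\, y : \inpnorm{\samplevec - y} = \error \,\} \).

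For the first consequence, fix \( \repvec_{\samplevec} \in \codes \) with \( \inpnorm{\samplevec} > \error \) and suppose the error constraint were slack, \( \inpnorm{\samplevec - \linmap(\repvec_{\samplevec})} < \error + \regulizer \mathsf{c}_{\samplevec} \). For \( \theta \in (0,1) \) close to \( 1 \) I would check that \( (\theta \mathsf{c}_{\samplevec}, \theta \repvec_{\samplevec}) \) is feasible for \eqref{eq:coding-problem}: indeed \( \cost(\theta\repvec_{\samplevec})^{1/\horder} = \theta \cost(\repvec_{\samplevec})^{1/\horder} \le \theta \mathsf{c}_{\samplevec} \), and using \( \inpnorm{\samplevec - \theta\linmap(\repvec_{\samplevec})} \le \inpnorm{\samplevec - \linmap(\repvec_{\samplevec})} + (1-\theta)\inpnorm{\linmap(\repvec_{\samplevec})} \) one sees that the error bound \( \error + \regulizer\theta\mathsf{c}_{\samplevec} \) holds for \( \theta \) near \( 1 \) by continuity; this contradicts the optimality of \( \mathsf{c}_{\samplevec} \). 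Hence \( \inpnorm{\samplevec - \linmap(\repvec_{\samplevec})} = \error + \regulizer \mathsf{c}_{\samplevec} \) for every \( \repvec_{\samplevec} \in \codes \).

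For the second consequence, take \( \repvec_{\samplevec}, g_{\samplevec} \in \codes \). When \( \error > 0 \), the membership step applied to \( \repvec_{\samplevec} \) and to \( g_{\samplevec} \) places both \( \tfrac{\error\linmap(\repvec_{\samplevec}) + \regulizer\mathsf{c}_{\samplevec}\samplevec}{\error + \regulizer\mathsf{c}_{\samplevec}} \) and \( \tfrac{\error\linmap(g_{\samplevec}) + \regulizer\mathsf{c}_{\samplevec}\samplevec}{\error + \regulizer\mathsf{c}_{\samplevec}} \) into the intersection, which is a singleton; since the coefficient of \( \linmap(\cdot) \) is \( \error/(\error + \regulizer\mathsf{c}_{\samplevec}) \ne 0 \), this forces \( \linmap(\repvec_{\samplevec}) = \linmap(g_{\samplevec}) \). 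When \( \error = 0 \) and \( \regulizer = 0 \), both images equal \( \samplevec \). When \( \error = 0 \) and \( \regulizer > 0 \), suppose \( \linmap(\repvec_{\samplevec}) \ne \linmap(g_{\samplevec}) \); then \( \repvec \coloneqq \tfrac12(\repvec_{\samplevec} + g_{\samplevec}) \) is again optimal, by convexity of \( \cost^{1/\horder} \) and of the ball, hence \( \repvec \in \codes \); but applying strict convexity to the two distinct vectors \( \samplevec - \linmap(\repvec_{\samplevec}) \) and \( \samplevec - \linmap(g_{\samplevec}) \), each of norm \( \regulizer\mathsf{c}_{\samplevec} \) by the first consequence, gives \( \inpnorm{\samplevec - \linmap(\repvec)} < \regulizer\mathsf{c}_{\samplevec} \), contradicting the first consequence applied to \( \repvec \). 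This exhausts the cases and completes the proof.
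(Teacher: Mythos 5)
Your proof is correct, and its core --- uniqueness of the intersection point via strict convexity of the Hilbert ball played against the minimality of the scaling in Lemma~\ref{lemma:coding-problem-reformulation-1} --- is the same mechanism the paper uses; the differences are in how the pieces are organised. For uniqueness, the paper takes two distinct intersection points, notes their midpoint lies in the \emph{open} ball by strict convexity, and shrinks that midpoint radially to exhibit a nonempty intersection at a strictly smaller scaling; you instead first prove the intermediate claim that every intersection point must lie on the sphere \( \{ y : \inpnorm{\samplevec - y} = \error \} \) (by shrinking any interior intersection point) and then let the midpoint violate that claim --- logically equivalent, same ingredients. The more substantive divergence is in the first consequence: the paper shows the error constraint is active by arguing geometrically that a slack constraint would place \( y\opt \) in the open ball and hence produce a second intersection point \( \alpha y\opt \), contradicting uniqueness; you instead scale the primal pair to \( (\theta \mathsf{c}_{\samplevec}, \theta \repvec_{\samplevec}) \) and check feasibility directly for \( \theta \) near \( 1 \), contradicting optimality of \( \mathsf{c}_{\samplevec} \). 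Your version is more elementary and does not depend on the uniqueness statement at all, which decouples the consequences from the geometry; the paper's version keeps everything inside the intersection-of-convex-bodies picture that it later exploits for the separation and duality arguments. Similarly, for \( \error > 0 \) you read \( \linmap (\repvec_{\samplevec}) = \linmap (g_{\samplevec}) \) directly off the explicit formula for the unique point \( y\opt \), whereas the paper runs the midpoint/strict-triangle-inequality argument uniformly in all cases; both are valid, and your case split correctly covers \( \error = 0 \) with \( \regulizer > 0 \), where the formula alone would not suffice.
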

\begin{remark} \rm{
In dictionary learning, for a given dictionary \( \dictionary \), since every sample vector \( \samplevec \in \hilbert \) is represented by some vector \( \repvec_{\dictionary} (\samplevec) \in \encodermap{\dictionary}{\samplevec}{\error}{\regulizer} \), the representation is not unique whenever the set \( \encodermap{\dictionary}{\samplevec}{\error}{\regulizer} \) is not a singleton. In such situations, even though the representation need not be unique, we emphasize that the reconstruction \( \samplevec_{\text{rec}} \coloneqq \dictionary \repvec_{\dictionary}( \samplevec ) \) of the vector \( \samplevec \) obtained from its representation \( \repvec_{\dictionary} (\samplevec) \) is unique.
} \end{remark}
\begin{remark} \rm{
\label{remark:alternate-expression-for-intersection-point}
When \( \samplevec \notin \closednbhood{0}{\error} \) since \( \mathsf{c}_{\samplevec} > 0 \), it is easily verified that the unique point of intersection \( y \) in Lemma \ref{lemma:uniqueness-of-intersection} can also be written as:
\[
y\opt = \linmap (\repvec_{\samplevec}) + \frac{\regulizer \mathsf{c}_{\samplevec}}{\error + \regulizer \mathsf{c}_{\samplevec}} \big( \samplevec - \linmap (\repvec_{\samplevec}) \big) \indicator_{ ]0, +\infty[ } (\regulizer) .
\]
} \end{remark}
\begin{proof}[\rm{\textbf{Lemma \ref{lemma:uniqueness-of-intersection}}}]
We note that if \( \error = 0 \), \( \closednbhood{\samplevec}{\error} = \samplevec \), and since by definition, the set \( \atomscaled \) intersects with \( \closednbhood{\samplevec}{\error} \). The intersecton happens at the point \( \samplevec \) which is unique. We shall establish \eqref{eq:intersection-lemma} by considering the remaining cases.
\begin{itemize}[leftmargin = *]
    \item \( 0 < \inpnorm{\samplevec} \leq \error \) : From \eqref{eq:zero-approximate-sample}, we know that \( \samplecost = 0 \) and \( \codes = \{ 0 \} \). This implies that  \( \atomscaled = \{ 0 \} \). In addition, we see that \( 0 \in \closednbhood{\samplevec}{\error} \) whenever \( \inpnorm{\samplevec} \leq \error \). As a result, we obtain that \( \closednbhood{\samplevec}{\error} \cap \atomscaled = \{ 0 \} \). Now, by using the fact that \( (\mathsf{c}_{\samplevec} , \repvec_{\samplevec}) \) is an optimal solution to the coding problem \eqref{eq:coding-problem} if and only if \( (\mathsf{c}_{\samplevec} , \repvec_{\samplevec}) = (0,0) \), we see that \( y\opt \) in \eqref{eq:intersection-lemma} evaluates to \( 0 \) confirming \eqref{eq:intersection-lemma}.
    
    \item \( 0 < \error < \inpnorm{\samplevec} \) : We shall prove by contradiction that the sets \( \closednbhood{\samplevec}{\error} \) and \( \atomscaled \) intersect at a unique point. Let \( y_1 \neq y_2 \) be such that \( y_1 , y_2 \in \closednbhood{\samplevec}{\error} \cap \atomscaled \). Since \( \closednbhood{\samplevec}{\error} \) is a strictly convex set, \( \frac{1}{2} (y_1 + y_2) \in \nbhood{\samplevec}{\error} \). However, since \( \nbhood{\samplevec}{\error} \) is an open set, one can find \( \rho > 0 \) such that \( \closednbhood{ \frac{1}{2} (y_1 + y_2) }{\rho} \subset \nbhood{\samplevec}{\error} \). Since \( 0 \notin \nbhood{\samplevec}{\error} \), we conclude that \( 2\rho < \inpnorm{y_1 + y_2} \); Defining \( \theta \define \left( 1 - \frac{2 \rho}{\inpnorm{y_1 + y_2}} \right) \), we see that \( \theta \in ]0, 1[ \). It is easily verified that \( \inpnorm{\frac{1}{2}(y_1 + y_2) - \frac{\theta}{2} (y_1 + y_2) } = \rho \), which leads us to the first inclusion \( \frac{\theta}{2} (y_1 + y_2) \in \closednbhood{\frac{1}{2} (y_1 + y_2)}{\rho} \subset \closednbhood{\samplevec}{\error} \). In addition, we note that the set \( \atomscaled \) is also convex, which means that \( \frac{1}{2} (y_1 + y_2) \in \atomscaled \). Since \( \atomscaled \) scales linearly, we conclude that \( \frac{\theta}{2} (y_1 + y_2) \in \theta \cdot \atomscaled \). From these two inclusions, it is clear that
    \[
    \frac{\theta}{2} (y_1 + y_2) \in \closednbhood{\samplevec}{\error} \cap \theta \cdot \atomscaled = \closednbhood{\samplevec}{\error} \cap \atomset{\theta \mathsf{c}_{\samplevec} } \; ,
    \]
    and equivalently, \( \closednbhood{\samplevec}{\error} \cap \atomset{\theta \mathsf{c}_{\samplevec} } \neq \emptyset \). This, however, contradicts the assertion of Lemma \ref{lemma:coding-problem-reformulation-1} since \( \theta < 1 \). 
    
    To summarize, we have established that if the intersection of the sets \( \closednbhood{\samplevec}{\error} \) and \( \atomscaled \) is not a singleton, we can slightly shrink the set \( \atomscaled \) such that it still intersects \( \closednbhood{\samplevec}{\error} \) nontrivially. This is a contradiction in view of Lemma \ref{lemma:coding-problem-reformulation-1}.
    
    To prove that \( y\opt \) defined in \eqref{eq:intersection-lemma} is indeed the unique point of intersection, it suffices to show that \( y\opt \in \closednbhood{\samplevec}{\error} \cap \atomscaled \). We observe that:
    \[
\begin{aligned}
    \inpnorm{ \samplevec - y\opt } & = \inpnorm{ \samplevec - \frac{\error \linmap (\repvec_{\samplevec}) + \regulizer \mathsf{c}_{\samplevec} \samplevec}{\error + \regulizer \mathsf{c}_{\samplevec}} } = \frac{\error}{\error + \regulizer \mathsf{c}_{\samplevec}} \inpnorm{ \samplevec - \linmap (\repvec_{\samplevec})} \leq \error \; , \text{ and } \\
    \inpnorm{y\opt - \linmap (\repvec_{\samplevec}) } & = \inpnorm{ \frac{\error \linmap (\repvec_{\samplevec}) + \regulizer \mathsf{c}_{\samplevec} \samplevec}{\error + \regulizer \mathsf{c}_{\samplevec}} - \linmap (\repvec_{\samplevec}) } = \frac{ \regulizer \mathsf{c}_{\samplevec} }{\error + \regulizer \mathsf{c}_{\samplevec}} \inpnorm{ \samplevec - \linmap (\repvec_{\samplevec}) } \leq \regulizer \mathsf{c}_{\samplevec} .
\end{aligned}
    \]
These inequalities, along with the fact that \( \cost (\repvec_{\samplevec}) \leq \mathsf{c}_{\samplevec}^{\horder} \), imply that \( y\opt \in \closednbhood{\samplevec}{\error} \cap \atomscaled \). This establishes \eqref{eq:intersection-lemma}.
\end{itemize}

We proceed to establish the two consequences. to see the first, let us prove that the error constraint is active at the optimal solution \( (\mathsf{c}_{\samplevec}, \repvec_{\samplevec}) \) whenever \( \inpnorm{\samplevec} > \error \geq 0 \). If \( \error = \regulizer = 0 \), then the error constraint is trivially active since \( \inpnorm{\samplevec - \linmap (\repvec_{\samplevec}) } \leq 0 \) implies that \( \inpnorm{\samplevec - \linmap (\repvec_{\samplevec}) } = 0 \). If at least one of the parameters \( \error \) and \( \regulizer \) is positive, we know that \( \mathsf{c}_{\samplevec} > 0 \) for every \( \inpnorm{\samplevec} > \error \). Therefore, we have \( \error + \regulizer \mathsf{c}_{\samplevec} > 0 \), and the quantity \( y\opt \define \frac{\error \linmap (\repvec_{\samplevec}) + \regulizer \mathsf{c}_{\samplevec} \samplevec}{\error + \regulizer \mathsf{c}_{\samplevec}} \) is well defined for every \( \repvec_{\samplevec} \in \codes \). We know from the previous assertion of the lemma that \( y\opt \in \atomscaled \). However, since \( \atomscaled \) is a convex set that contains 0, we conclude that
\[
\theta y\opt \in \atomscaled  \text{ for every \( \theta \in [0,1] \)} .
\]
If we suppose that \( \inpnorm{\samplevec - \linmap (\repvec_{\samplevec}) } < \error + \regulizer \mathsf{c}_{\samplevec} \), it is easily verified that \( \inpnorm{\samplevec - y\opt} < \error \), and thus \( y\opt \in \nbhood{\samplevec}{\error} \). As a result, one can find \( \rho > 0 \) such that \( \closednbhood{y\opt}{\rho} \subset \closednbhood{\samplevec}{\error} \). Since \( 0 \notin \closednbhood{\samplevec}{\error} \), we see at once that \( \rho < \inpnorm{y\opt} \), and conclude that
\[
\alpha y\opt \in \closednbhood{y\opt}{\rho} \subset \closednbhood{\samplevec}{\error} \text{ for every \( \alpha \) such that \( \left( 1 - \frac{\rho}{\inpnorm{y\opt}} \right) \leq \alpha \leq 1 \)} .
\]
These two inclusions together contradict that the sets \( \closednbhood{\samplevec}{\error} \) and \( \atomscaled \) intersect at a unique point.

It remains to prove the final assertion that for every \( \repvec_{\samplevec} , g_{\samplevec} \in \codes \), the equality \( \linmap (\repvec_{\samplevec}) = \linmap ( g_{\samplevec} ) \) holds. Indeed, whenever \( \inpnorm{\samplevec} \leq \error \), we have \( \samplecost = 0 \) and \( \codes = \{ 0 \} \). This implies that \( \repvec_{\samplevec} = g_{\samplevec} = 0 \), and thus \( \linmap (\repvec_{\samplevec}) = \linmap ( g_{\samplevec} ) \). Let us consider the case when \( \inpnorm{\samplevec} > \error \), and suppose that \( \linmap (\repvec_{\samplevec}) \neq \linmap (g_{\samplevec}) \) for some \( \repvec_{\samplevec} , g_{\samplevec} \in \codes \). Then it follows that \( \frac{1}{2} (\repvec_{\samplevec} + g_{\samplevec}) \) satisfies the error constraint
\[
\inpnorm{\samplevec - \frac{1}{2} \linmap (\repvec_{\samplevec} + g_{\samplevec})} = \inpnorm{\frac{1}{2} \big(\samplevec - \linmap (\repvec_{\samplevec}) \big) +  \frac{1}{2} \big( \samplevec - \linmap (g_{\samplevec}) \big)} \leq \error + \regulizer \mathsf{c}_{\samplevec} .
\]
Moreover, we know that the level sets of  \( \cost \) are convex and since \( \repvec_{\samplevec} , g_{\samplevec} \in \mathsf{c}_{\samplevec} \cdot \costatomset \), we have \( \frac{1}{2} (\repvec_{\samplevec} + g_{\samplevec}) \in \mathsf{c}_{\samplevec} \). Therefore, \( \cost (\frac{1}{2} (\repvec_{\samplevec} + g_{\samplevec}) ) \leq \mathsf{c}_{\samplevec}^{\horder} = \samplecost \), we conclude that \( \frac{1}{2} (\repvec_{\samplevec} + g_{\samplevec}) \in \codes \). However, since \( \linmap (\repvec_{\samplevec}) \neq \linmap (g_{\samplevec}) \), the triangle inequality implies that the above error constraint is satisfied strictly. This contradicts our earlier assertion that the error constraint is active for every \( \repvec_{\samplevec} \in \codes \). The proof is complete.
\end{proof}

\begin{lemma}
\label{lemma:separation-dummy}
Let the linear map \( \linmap : \R{\dsize} \longrightarrow \hilbert \) and non-negative real numbers \( \error, \regulizer \) be given, then for every \( \separator \in \hilbert \), we have
\begin{equation}
\label{eq:maximization-over-atomball-changed-to-hvar}
\dualnorm{\separator} \define \max_{z \in \atomball} \inprod{\separator}{z} \ = \ \regulizer \inpnorm{\separator} \; + \; \max_{\hvar \in \costatomset} \ \inprod{\separator}{\linmap (\hvar)} .
\end{equation}
Furthermore,
\begin{enumerate}[leftmargin = * , label = \rm{(\roman*)}]
\item If \( \regulizer > 0 \), then \( \dualnorm{\separator} > 0 \) for every \( \separator \in \hilbert \setminus \{ 0 \} \).

\item If \( \regulizer = 0 \), and \( \separator \in \hilbert \setminus \{ 0 \} \) satisfies \( \dualnorm{\separator} = 0 \), then \( \inprod{\separator}{\samplevec} - \error \inpnorm{\separator} \leq 0 \) for every \( (\linmap , \error , 0) \)-feasible vector \( \samplevec \in \hilbert \).
\end{enumerate}
\end{lemma}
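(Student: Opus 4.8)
The plan is to derive the identity \eqref{eq:maximization-over-atomball-changed-to-hvar} directly from the structural description of $\atomball$ recorded just above the statement, and then to read off parts (i) and (ii) as short consequences of it.

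\emph{The identity.} I would use that $\atomball$ is the image of the compact set $\costatomset \times \closednbhood{0}{\regulizer}$ under the continuous addition map $(\hvar, y) \longmapsto \linmap(\hvar) + y$; equivalently, $z \in \atomball$ if and only if $z = \linmap(\hvar) + y$ for some $\hvar \in \costatomset$ and $\inpnorm{y} \le \regulizer$. Since $\costatomset$ is compact and $\linmap$ continuous, the supremum defining $\dualnorm{\separator}$ is attained and separates as
\[
\dualnorm{\separator} \; = \; \max_{\hvar \in \costatomset} \inprod{\separator}{\linmap(\hvar)} \; + \; \max_{\inpnorm{y} \le \regulizer} \inprod{\separator}{y} ,
\]
and the second maximum equals $\regulizer \inpnorm{\separator}$ by Cauchy--Schwarz, attained at $y = \regulizer \separator / \inpnorm{\separator}$ when $\separator \neq 0$ and trivially zero when $\separator = 0$. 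This is the only computation involved, and it is routine.

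\emph{Parts (i) and (ii).} Both hinge on two elementary observations about $\costatomset$. First, $0 \in \costatomset$, because positive homogeneity of order $\horder > 0$ forces $\cost(0) = 0 \le 1$. Second, $\costatomset$ is absorbing: since $\cost$ is finite-valued and positively homogeneous, every $f \in \R{\dsize}$ can be written $f = \lambda \hvar$ with $\hvar \in \costatomset$ and $\lambda \ge 0$ (take $\lambda = \cost(f)^{1/\horder}$ when $\cost(f) > 0$, and $\hvar = f$, $\lambda = 1$ otherwise). For (i), the identity together with $0 \in \costatomset$ gives $\dualnorm{\separator} \ge \regulizer \inpnorm{\separator} + \inprod{\separator}{\linmap(0)} = \regulizer \inpnorm{\separator}$, which is strictly positive once $\regulizer > 0$ and $\separator \neq 0$. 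For (ii), with $\regulizer = 0$ the identity reads $\dualnorm{\separator} = \max_{\hvar \in \costatomset} \inprod{\separator}{\linmap(\hvar)}$, so $\dualnorm{\separator} = 0$ forces $\inprod{\separator}{\linmap(\hvar)} \le 0$ for every $\hvar \in \costatomset$; the absorbing property then upgrades this to $\inprod{\separator}{z} \le 0$ for every $z \in \image(\linmap)$. Finally, if $\samplevec$ is $\Depsdelta$-feasible with $\regulizer = 0$, then by the remark after Definition \ref{def:representability} there is some $z \in \image(\linmap)$ with $\inpnorm{\samplevec - z} \le \error$, so that $\inprod{\separator}{\samplevec} = \inprod{\separator}{z} + \inprod{\separator}{\samplevec - z} \le \inpnorm{\separator}\,\inpnorm{\samplevec - z} \le \error \inpnorm{\separator}$, which is exactly the claimed inequality $\inprod{\separator}{\samplevec} - \error \inpnorm{\separator} \le 0$.

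I do not anticipate a genuine obstacle. The one step deserving a little care is the absorbing property of $\costatomset$ used in (ii): it is precisely here that finiteness of $\cost$ is needed alongside positive homogeneity, for if $\cost$ could take the value $+\infty$ in some direction, that direction might never meet $\costatomset$, and the passage from ``$\inprod{\separator}{\linmap(\cdot)}$ nonpositive on $\costatomset$'' to ``nonpositive on all of $\image(\linmap)$'' would no longer be valid.
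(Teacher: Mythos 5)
Your proposal is correct and follows essentially the same route as the paper's own proof: the identity via the separable maximization over the Minkowski-sum description of \( \atomball \), part (i) from \( 0 \in \costatomset \), and part (ii) by using the absorbing property of \( S_0(\linmap,1) \) in \( \image(\linmap) \) to get \( \inprod{\separator}{\cdot} \leq 0 \) there and then evaluating at a point of \( \closednbhood{\samplevec}{\error} \cap \image(\linmap) \). The only difference is cosmetic — you spell out the absorbing property of \( \costatomset \) and use Cauchy--Schwarz where the paper writes the same bound as \( \min_{y \in \closednbhood{\samplevec}{\error}} \inprod{\separator}{y} \leq \inprod{\separator}{y'} \).
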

\begin{proof}
We recall from the definition \eqref{eq:atomic-ball-definition} that the set \( \atomball \) is the image of the linear map: \( \closednbhood{0}{\regulizer} \times \costatomset \ni (z', \hvar) \longmapsto z' + \linmap (\hvar) \). This allows us to write the optimization problem \( \max\limits_{z \in \atomball} \inprod{\separator}{z} \) equivalently as:
\[
\max\limits_{\hvar, \; z'} \ \ \inprod{\separator}{z' + \linmap (\hvar)} \quad \sbjto \ \ \hvar \in \costatomset , \ z' \in \closednbhood{0}{\regulizer} .
\]
It is easily seen that the above optimization problem is separable into maximization over individual variables, and using the fact that \( \max\limits_{z' \in \closednbhood{0}{\regulizer}} \inprod{\separator}{z'} = \regulizer \inpnorm{\separator} \) for every \( \separator \in \hilbert \) \eqref{eq:maximization-over-atomball-changed-to-hvar} follows at once. Moreover, since \( 0 \in \costatomset \), we have \( 0 \leq \max\limits_{\hvar \in \costatomset} \; \inprod{\separator}{\linmap (\hvar)} \) for every \( \separator \in \hilbert \). Applying this inequality in \eqref{eq:maximization-over-atomball-changed-to-hvar}, assertion (i) of the lemma follows immediately.

Finally, let \( \regulizer = 0 \) and \( \separator \in \hilbert \setminus \{ 0 \} \) satisfy \( \dualnorm{\separator} = 0 \). Since \( S_0 (\linmap , 1) \) is an absorbing set to \( \image (\linmap) \), we conclude from the definition \eqref{eq:dual-norm-definition} of the dual function that \( \inprod{\separator}{y} \leq 0 \) for every \( y \in \image (\linmap) \). If \( \samplevec \in \hilbert \) is \( (\linmap, \error, 0) \)-feasible, we know that \( \closednbhood{\samplevec}{\error} \cap \image (\linmap) \neq \emptyset \). Let \( y' \in \closednbhood{\samplevec}{\error} \cap \image (\linmap) \), then
\[
\inprod{\separator}{\samplevec} - \error \inpnorm{\separator} = \min_{y \in \closednbhood{\samplevec}{\error}} \inprod{\separator}{y} \ \leq \ \inprod{\separator}{y'} \; \leq \; 0 .
\]
This completes the proof.
\end{proof}

\subsubsection*{Separation of sets \( \closednbhood{\samplevec}{\error} \) and \( \atomscaled \).}
We recall that both the sets \( \closednbhood{\samplevec}{\error} \) and \( \atomscaled \) are compact convex subsets that intersect at the unique point \( y\opt \). As a result, we know from the Hahn-Banach separation principle that there exists a \( \separator\opt \in \hilbert \) such that the linear functional \( \inprod{\separator\opt}{\cdot} \) satisfies
\begin{equation}
\label{eq:the-other-inequality }
\max_{z \in \atomscaled} \inprod{\separator\opt}{z} \ = \ \inprod{\separator\opt}{y\opt} \ = \ \min_{y \in \closednbhood{\samplevec}{\error}} \inprod{\separator\opt}{y} .
\end{equation}
In other words, the linear functional \( \inprod{\separator\opt}{\cdot} \), separates the convex sets \( \closednbhood{\samplevec}{\error} \) and \( \atomscaled \), and supports them at their unique point of intersection \( y\opt \). This fact, is central in establishing strong duality and explicitly characterizing the optimal dual variables.

\begin{lemma}
\label{lemma:equality-condition}
Consider \eqref{eq:the-other-inequality } where at least one of \( \error, \regulizer \) is positive. If  \( 0 \neq \separator' \in \hilbert \) satisfies \eqref{eq:the-other-inequality }, then \( \separator' = \alpha \big(\samplevec - \linmap (\repvec_{\samplevec}) \big) \) for some  \( \alpha > 0 \) and \( \repvec_{\samplevec} \in \codes \). Consequently, \eqref{eq:the-other-inequality } is satisfied by \( \alpha (\samplevec - \linmap (\repvec_{\samplevec})) \) for every \( \alpha > 0 \).
\end{lemma}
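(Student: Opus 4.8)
The plan is to read off \( \separator' \) directly from the two equalities bundled in \eqref{eq:the-other-inequality }, treating the geometrically distinct regimes \( \error > 0 \) and \( \error = 0 \) (where then \( \regulizer > 0 \)) separately. First I would reduce to \( \inpnorm{\samplevec} > \error \): when \( \inpnorm{\samplevec} < \error \) the origin is interior to \( \closednbhood{\samplevec}{\error} \) while \eqref{eq:zero-approximate-sample} gives \( y\opt = 0 \), so \( \min_{y \in \closednbhood{\samplevec}{\error}} \inprod{\separator'}{y} < 0 = \inprod{\separator'}{y\opt} \) for \( \separator' \neq 0 \) and the hypothesis is vacuous; the boundary case \( \inpnorm{\samplevec} = \error \) is dispatched by hand with \( \repvec_{\samplevec} = 0 \). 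Assuming \( \inpnorm{\samplevec} > \error \), fix any \( \repvec_{\samplevec} \in \codes \) and set \( \mathsf{c}_{\samplevec} \define (\samplecost)^{1/\horder} > 0 \), so \( \error + \regulizer \mathsf{c}_{\samplevec} > 0 \); by Lemma \ref{lemma:uniqueness-of-intersection} and Remark \ref{remark:alternate-expression-for-intersection-point}, \( \samplevec - y\opt = \tfrac{\error}{\error + \regulizer \mathsf{c}_{\samplevec}}\big(\samplevec - \linmap(\repvec_{\samplevec})\big) \), \( \ y\opt - \linmap(\repvec_{\samplevec}) = \tfrac{\regulizer \mathsf{c}_{\samplevec}}{\error + \regulizer \mathsf{c}_{\samplevec}}\big(\samplevec - \linmap(\repvec_{\samplevec})\big) \), and the error constraint is active, \( \inpnorm{\samplevec - \linmap(\repvec_{\samplevec})} = \error + \regulizer \mathsf{c}_{\samplevec} \).

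For \( \error > 0 \) I would use only the right-hand equality of \eqref{eq:the-other-inequality }. The ball \( \closednbhood{\samplevec}{\error} \) has positive radius, hence is strictly convex, so \( \inprod{\separator'}{\cdot} \) attains its minimum over it at the unique point \( \samplevec - \error \separator' / \inpnorm{\separator'} \); since \( y\opt \) minimizes it, \( \samplevec - y\opt = \error \separator' / \inpnorm{\separator'} \). Comparing with the identity for \( \samplevec - y\opt \) above and cancelling \( \error \) yields \( \separator' = \tfrac{\inpnorm{\separator'}}{\error + \regulizer \mathsf{c}_{\samplevec}}\big(\samplevec - \linmap(\repvec_{\samplevec})\big) \), which is the claim with \( \alpha = \inpnorm{\separator'}/(\error + \regulizer \mathsf{c}_{\samplevec}) > 0 \).

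For \( \error = 0 \) the ball collapses to \( \{\samplevec\} \), \( y\opt = \samplevec \), and the right-hand equality carries no information, so everything must come from \( \inprod{\separator'}{\samplevec} = \max_{z \in \atomscaled} \inprod{\separator'}{z} \). Writing \( \atomscaled = \mathsf{c}_{\samplevec}\cdot\atomball = \big\{ \linmap(\repvec) + w : \big(\cost(\repvec)\big)^{1/\horder} \leq \mathsf{c}_{\samplevec},\ \inpnorm{w} \leq \regulizer \mathsf{c}_{\samplevec} \big\} \) and using \eqref{eq:maximization-over-atomball-changed-to-hvar} together with homogeneity, this maximum equals \( \max_{(\cost(\repvec))^{1/\horder}\leq\mathsf{c}_{\samplevec}} \inprod{\separator'}{\linmap(\repvec)} + \regulizer\mathsf{c}_{\samplevec}\inpnorm{\separator'} \). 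Since \( \samplevec = \linmap(\repvec_{\samplevec}) + w \) with \( w \define \samplevec - \linmap(\repvec_{\samplevec}) \), \( \inpnorm{w} = \regulizer\mathsf{c}_{\samplevec} \), \( \big(\cost(\repvec_{\samplevec})\big)^{1/\horder} \leq \mathsf{c}_{\samplevec} \), the equality becomes \( \inprod{\separator'}{\linmap(\repvec_{\samplevec})} + \inprod{\separator'}{w} = \max_{\repvec}\inprod{\separator'}{\linmap(\repvec)} + \regulizer\mathsf{c}_{\samplevec}\inpnorm{\separator'} \); both \( \inprod{\separator'}{\linmap(\repvec_{\samplevec})} \leq \max_{\repvec}\inprod{\separator'}{\linmap(\repvec)} \) and \( \inprod{\separator'}{w} \leq \inpnorm{\separator'}\inpnorm{w} = \regulizer\mathsf{c}_{\samplevec}\inpnorm{\separator'} \) must therefore be equalities, and the Cauchy--Schwarz equality case gives \( w = \tfrac{\regulizer\mathsf{c}_{\samplevec}}{\inpnorm{\separator'}}\separator' \), i.e. \( \separator' = \tfrac{\inpnorm{\separator'}}{\regulizer\mathsf{c}_{\samplevec}}\big(\samplevec - \linmap(\repvec_{\samplevec})\big) \), once more of the asserted form with \( \alpha > 0 \). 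The closing clause is then immediate: every term in \eqref{eq:the-other-inequality } is positively homogeneous of degree one in the vector placed in the inner product, so \eqref{eq:the-other-inequality } is invariant under multiplication of \( \separator \) by a positive scalar; holding for one positive multiple of \( \samplevec - \linmap(\repvec_{\samplevec}) \) it holds for every such multiple, and \( \samplevec - \linmap(\repvec_{\samplevec}) \) is well defined since \( \linmap \) is constant on \( \codes \) by Lemma \ref{lemma:uniqueness-of-intersection}.

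I expect the \( \error = 0 \) case to be the main obstacle: there the minimizing half of the separation is vacuous, so one must squeeze the conclusion out of the support inequality on \( \atomscaled \) alone, which requires its explicit Minkowski-sum description together with activeness of the error constraint and a little care that \( \mathsf{c}_{\samplevec} > 0 \) (so that \( \alpha \) is genuinely positive) --- the very reason for the reduction to \( \inpnorm{\samplevec} > \error \) at the outset. The \( \error > 0 \) case is a one-line consequence of strict convexity of the ball.
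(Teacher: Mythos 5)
Your proof is correct and follows essentially the same route as the paper: both rest on the uniqueness of the intersection point \( y\opt \) from Lemma \ref{lemma:uniqueness-of-intersection}, its explicit formula in Remark \ref{remark:alternate-expression-for-intersection-point}, and the fact that a linear functional supporting a Euclidean ball at a boundary point must point in the outward radial direction, applied to \( \closednbhood{\samplevec}{\error} \) when \( \error > 0 \) and to the ball \( \closednbhood{\linmap(\repvec_{\samplevec})}{\regulizer \mathsf{c}_{\samplevec}} \) (via your Minkowski-sum/Cauchy--Schwarz splitting, which is the same computation) when \( \error = 0 \). The only cosmetic differences are your case split on \( \error > 0 \) versus \( \error = 0 \) rather than the paper's overlapping split on \( \error > 0 \) versus \( \regulizer > 0 \), and your explicit preliminary reduction to \( \inpnorm{\samplevec} > \error \), which is a welcome extra bit of care ensuring \( \mathsf{c}_{\samplevec} > 0 \).
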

\begin{proof}
We recall from the Remark \ref{remark:alternate-expression-for-intersection-point} that the sets \( \closednbhood{\samplevec}{\error} \) \( \atomscaled \) intersect at the unique point \( y\opt \), given by
\[
y\opt = \linmap (\repvec_{\samplevec})  + \frac{\regulizer \mathsf{c}_{\samplevec}}{\error + \regulizer \mathsf{c}_{\samplevec}} \big( \samplevec - \linmap (\repvec_{\samplevec}) \big) \indicator_{ ]0, +\infty[ } (\regulizer) ,
\]
where \( \mathsf{c}_{\samplevec} \define (\samplecost)^{1/\horder} \) and \( \repvec_{\samplevec} \in \codes \).
\begin{itemize}[leftmargin = *]
    \item On the one hand, if \( \error > 0 \) and \( \separator' \neq 0 \) satisfies: \( \inprod{\separator'}{y\opt} \ = \ \min\limits_{y \in \closednbhood{\samplevec}{\error}} \ \inprod{\separator'}{y} \), then necessarily \( \separator' = \alpha' (\samplevec - y\opt) \) for some \( \alpha' > 0 \).
    
    \item On the other hand, if \( \regulizer > 0 \), and \( \separator' \neq 0 \) satisfies: \( \inprod{\separator'}{y\opt} \ = \ \max\limits_{z \in \atomscaled} \ \inprod{\separator'}{z} \), then due to the fact that \( y\opt \in \closednbhood{\linmap (\repvec_{\samplevec}) }{\regulizer \mathsf{c}_{\samplevec}} \subset \atomscaled \) \( \separator' \) also satisfies: \( \inprod{\separator'}{y\opt} \ = \ \max\limits_{z \in \closednbhood{\linmap (\repvec_{\samplevec})}{\regulizer \mathsf{c}_{\samplevec}}} \ \inprod{\separator'}{z} \). It follows that: \( \separator' = \alpha'' \big( y\opt - \linmap (\repvec_{\samplevec}) \big) \) for some \( \alpha'' > 0 \). 
\end{itemize}
By substituting for \( y\opt \) and simplifying, we easily deduce that in both the cases \( \separator' = \alpha \big( \samplevec - \linmap (\repvec_{\samplevec}) \big) \) for some \( \alpha > 0 \). 

Suppose that \eqref{eq:the-other-inequality } is true for some \( \alpha' > 0 \), then for any \( \alpha > 0 \), the inequalities in \eqref{eq:the-other-inequality } are preserved by multiplying throughout by the positive quantity \( \frac{\alpha}{\alpha'} \). Thus, \eqref{eq:the-other-inequality } is satisfied by \( \alpha (\samplevec - \linmap (\repvec_{\samplevec})) \) for every \( \alpha > 0 \).
\end{proof}

\begin{lemma}
\label{lemma:optimal-separation-of-convex-bodies}
Let the linear map \( \linmap : \R{\dsize} \longrightarrow \hilbert \) and non-negative real numbers \( \error, \regulizer
\) be given, and \( \samplevec \in \hilbert \setminus \closednbhood{0}{\error} \) be any \( \Depsdelta \)-feasible vector such that \( \separatorset \neq \emptyset \). Then every \( \separator\opt \in \separatorset \) satisfies \eqref{eq:the-other-inequality }.
\end{lemma}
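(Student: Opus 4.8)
The plan is to evaluate directly the two outer expressions appearing in \eqref{eq:the-other-inequality } by invoking, one at a time, the two defining properties of the set \( \separatorset \), and then to squeeze the middle term \( \inprod{\separator\opt}{y\opt} \) between them. To set up, fix \( \separator\opt \in \separatorset \) and abbreviate \( \mathsf{c} \define (\samplecost)^{1/\horder} \). Since \( \samplevec \in \hilbert \setminus \closednbhood{0}{\error} \) we have \( \inpnorm{\samplevec} > \error \), whence \( \mathsf{c} > 0 \) by \eqref{eq:zero-approximate-sample}, while \( \mathsf{c} < +\infty \) because \( \samplevec \) is \( \Depsdelta \)-feasible; in particular \( \atomscaled = \mathsf{c} \cdot \atomball \) is a genuine positive rescaling of \( \atomball \). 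By Lemma \ref{lemma:uniqueness-of-intersection} the point \( y\opt \) is the unique element of \( \closednbhood{\samplevec}{\error} \cap \atomscaled \); the only fact about it that the argument will use is that it lies in both of these sets.

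Next I would compute the right-hand minimum in \eqref{eq:the-other-inequality }: for any \( \separator \in \hilbert \), minimizing the linear functional \( \inprod{\separator}{\cdot} \) over the ball \( \closednbhood{\samplevec}{\error} = \samplevec + \closednbhood{0}{\error} \) gives \( \inprod{\separator}{\samplevec} - \error \inpnorm{\separator} \), and by the second defining property of \( \separatorset \) this equals \( \mathsf{c} \). For the left-hand maximum, since \( \atomscaled = \mathsf{c} \cdot \atomball \) with \( \mathsf{c} > 0 \), the definition \eqref{eq:dual-norm-definition} of the dual function gives \( \max_{z \in \atomscaled} \inprod{\separator\opt}{z} = \mathsf{c}\, \dualnorm{\separator\opt} = \mathsf{c} \), using the first defining property \( \dualnorm{\separator\opt} = 1 \).

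Finally, because \( y\opt \in \atomscaled \) we have \( \inprod{\separator\opt}{y\opt} \leq \max_{z \in \atomscaled} \inprod{\separator\opt}{z} = \mathsf{c} \), and because \( y\opt \in \closednbhood{\samplevec}{\error} \) we have \( \inprod{\separator\opt}{y\opt} \geq \min_{y \in \closednbhood{\samplevec}{\error}} \inprod{\separator\opt}{y} = \mathsf{c} \); hence \( \inprod{\separator\opt}{y\opt} = \mathsf{c} \), so all three terms of \eqref{eq:the-other-inequality } coincide. I do not expect a real obstacle here: the argument is essentially bookkeeping, and the only points needing care are that \( \mathsf{c} \) is finite and strictly positive (so that the rescaling and the normalizations are legitimate) and that \emph{both} conditions defining \( \separatorset \) get used --- \( \dualnorm{\separator\opt} = 1 \) for the \( \atomscaled \)-side and \( \inprod{\separator\opt}{\samplevec} - \error \inpnorm{\separator\opt} = \mathsf{c} \) for the \( \closednbhood{\samplevec}{\error} \)-side.
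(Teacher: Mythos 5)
Your proposal is correct and follows essentially the same route as the paper: both compute \( \max_{z \in \atomscaled} \inprod{\separator\opt}{z} = (\samplecost)^{1/\horder} \) from \( \dualnorm{\separator\opt} = 1 \) and \( \min_{y \in \closednbhood{\samplevec}{\error}} \inprod{\separator\opt}{y} = (\samplecost)^{1/\horder} \) from the second defining property, and then conclude via the membership of \( y\opt \) in both sets. Your explicit squeeze of \( \inprod{\separator\opt}{y\opt} \) is just an unpacked version of the paper's appeal to the separating functional supporting both sets at their unique intersection point.
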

\begin{proof}
We first recall that \( \atomscaled = (\samplecost)^{1/\horder} \cdot \atomball \). Thus, for every \( \separator\opt \in \separatorset \), the following relations hold: 
\begin{align*}
& \max\limits_{z \in \atomscaled} \ \inprod{\separator\opt}{z} \ = \ (\samplecost)^{1/\horder} \max\limits_{z \in \atomball} \ \inprod{\separator\opt}{z} \ = \ (\samplecost)^{1/\horder} , \text{ and} \\
& \min\limits_{y \in \closednbhood{\samplevec}{\error}} \ \inprod{\separator\opt}{y} \ = \ \inprod{\separator\opt}{\samplevec} - \error \inpnorm{\separator\opt} \ = \ (\samplecost)^{1/\horder} .
\end{align*}
In other words, the linear functional \( \inprod{\separator\opt}{\cdot} \) separates the sets \( \closednbhood{\samplevec}{\error} \) and \( \atomscaled \). Moreover, both these sets are compact and convex, and we know from Lemma \ref{lemma:uniqueness-of-intersection} that they intersect at a unique point \( y\opt \). Therefore, the linear functional \( \inprod{\separator\opt}{\cdot} \) must support both these sets at their intersection point \( y\opt \), and \eqref{eq:the-other-inequality } follows at once.
\end{proof}

\begin{proposition}
\label{proposition:description-of-the-separator-set}
Let the linear map \( \linmap : \R{\dsize} \longrightarrow \hilbert \) and non-negative real numbers \( \error, \regulizer \) be given, and \( \samplevec \in \hilbert \setminus \closednbhood{0}{\error} \) be a \( \Depsdelta \)-feasible in the sense of Def. \ref{def:representability}. The set \( \separatorset \) is completely described in the following.
\begin{enumerate}[label = \rm{(\roman*)}, leftmargin=*]
\item If \( \regulizer = 0 \), \( \error = 0 \), then the set \( \Lambda_0 (\linmap, \samplevec, 0) \neq \emptyset \), and in particular, \( \Lambda_0 (\linmap, \samplevec, 0) \cap \image (\linmap) \neq \emptyset \). A vector \( \separator\opt \in \Lambda_0 (\linmap, \samplevec, 0) \) if and only if the linear functional \( \inprod{\separator\opt}{\cdot} \) supports the set \( S_0 (\linmap, \samplevec, 0) \) at \( \samplevec \), and satisfies \( \dualnorm{\separator\opt} = 1 \).\footnote{If \( \image (\linmap) \) is a proper subspace of \( \hilbert \), then every \( \separator \) in the orthogonal complement of \( \image (\linmap) \) supports the set \( S_0 (\linmap , 1) \) at every point, and in particular at \( \samplevec \). However, such a \( \separator \) doesn't satisfy the condition \( \inprod{\separator}{\samplevec} - \error \inpnorm{\separator} = (\samplecost)^{1/\horder} \).}

\item If at least one of the following is true 
\begin{itemize}
    \item \( \regulizer > 0 \)
    
    \item \( \regulizer = 0 \) and \( \error > 0 \) with \( \nbhood{\samplevec}{\error} \cap \image (\linmap) \neq \emptyset \)
\end{itemize}
then the set \( \separatorset \) consists of a unique element \( \separator\opt \) given by
    \begin{equation}
    \label{eq:unique-separator-solution}
          \separator\opt = \frac{\samplevec - \linmap (\repvec_{\samplevec}) }{ \dualnorm{\samplevec - \linmap (\repvec_{\samplevec}) } } \quad \text{for any \( \repvec_{\samplevec} \in \codes \) }.\footnote{Even though the set \( \codes \) may contain multiple elements, \( \separator\opt \) is unique due to the fact that \( \linmap (\repvec_{\samplevec}) \) is unique.}
    \end{equation}
    
\item If \( \regulizer = 0 \) and \( \error > 0 \) such that \( \nbhood{\samplevec}{\error} \cap \image (\linmap) = \emptyset \), then \( \separatorset = \emptyset \).
\end{enumerate}
\end{proposition}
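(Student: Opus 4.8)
The plan is to route everything through the convex geometry already assembled. By Lemma~\ref{lemma:uniqueness-of-intersection} the compact convex bodies $\closednbhood{\samplevec}{\error}$ and $\atomscaled = (\samplecost)^{1/\horder}\cdot\atomball$ touch at a single point $y\opt$; by Lemma~\ref{lemma:optimal-separation-of-convex-bodies} every element of $\separatorset$ realises the separation~\eqref{eq:the-other-inequality }; and --- whenever at least one of $\error,\regulizer$ is positive --- by Lemma~\ref{lemma:equality-condition} every nonzero functional realising~\eqref{eq:the-other-inequality } is a positive multiple of $v \define \samplevec - \linmap(\repvec_{\samplevec})$, which by Lemma~\ref{lemma:uniqueness-of-intersection} does not depend on the choice $\repvec_{\samplevec}\in\codes$. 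The single analytic fact that then decides the case split is whether $\dualnorm{v}$ is positive or zero, and the key observation is this: $\dualnorm{v}=0$ is impossible when $\regulizer>0$ (Lemma~\ref{lemma:separation-dummy}(i)); while for $\regulizer=0$ we have $\dualnorm{v}=\max_{\hvar\in\costatomset}\inprod{v}{\linmap(\hvar)}$, which vanishes exactly when $v\perp\image(\linmap)$ (because $\costatomset$ is a neighbourhood of $0$), and $v=\samplevec-\linmap(\repvec_{\samplevec})\perp\image(\linmap)$ says precisely that $\linmap(\repvec_{\samplevec})$ is the nearest point of $\image(\linmap)$ to $\samplevec$ --- equivalently, that the distance from $\samplevec$ to $\image(\linmap)$ equals $\inpnorm{v}=\error$, equivalently that $\nbhood{\samplevec}{\error}\cap\image(\linmap)=\emptyset$.

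For case~(ii) I would first record $v\neq 0$, since $\inpnorm{v}=\error+\regulizer(\samplecost)^{1/\horder}>0$ under either hypothesis (here $\samplecost>0$ because $\samplevec\notin\closednbhood{0}{\error}$), and then establish $\dualnorm{v}>0$: immediate from Lemma~\ref{lemma:separation-dummy}(i) when $\regulizer>0$, and from the observation above (in contrapositive) when $\regulizer=0$ and $\nbhood{\samplevec}{\error}\cap\image(\linmap)\neq\emptyset$. Put $\separator\opt\define v/\dualnorm{v}$. The Hahn--Banach separation invoked just before Lemma~\ref{lemma:equality-condition} yields a nonzero $\separator'$ obeying~\eqref{eq:the-other-inequality }; Lemma~\ref{lemma:equality-condition} makes $\separator'$ a positive multiple of $v$, and its last assertion then makes $\separator\opt$ itself obey~\eqref{eq:the-other-inequality }. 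Reading off~\eqref{eq:the-other-inequality } for $\separator\opt$ and using $\max_{z\in\atomscaled}\inprod{\separator\opt}{z}=(\samplecost)^{1/\horder}\dualnorm{\separator\opt}=(\samplecost)^{1/\horder}$ together with $\min_{y\in\closednbhood{\samplevec}{\error}}\inprod{\separator\opt}{y}=\inprod{\separator\opt}{\samplevec}-\error\inpnorm{\separator\opt}$ gives exactly the two identities of Definition~\ref{def:optimal-separator-set}, so $\separator\opt\in\separatorset$ and the set is nonempty. For uniqueness, any two elements of $\separatorset$ obey~\eqref{eq:the-other-inequality } (Lemma~\ref{lemma:optimal-separation-of-convex-bodies}) hence are both positive multiples of the \emph{same} vector $v$ (Lemma~\ref{lemma:equality-condition}), and the constraint $\dualnorm{\cdot}=1$ pins the scalar to $1/\dualnorm{v}$; this is precisely~\eqref{eq:unique-separator-solution}.

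Case~(iii) is the contrapositive of the dichotomy above: if $\nbhood{\samplevec}{\error}\cap\image(\linmap)=\emptyset$ then, since $\Depsdelta$-feasibility gives $\closednbhood{\samplevec}{\error}\cap\image(\linmap)\neq\emptyset$, the orthogonal projection of $\samplevec$ onto $\image(\linmap)$ lies at distance exactly $\error$ and must equal $\linmap(\repvec_{\samplevec})$ (every point of $\image(\linmap)$ within $\error$ of $\samplevec$ equals that projection), so $v\perp\image(\linmap)$ and $\dualnorm{v}=0$; if $\separatorset$ were nonempty an element would be some $\alpha v$ with $\alpha>0$ (Lemmas~\ref{lemma:optimal-separation-of-convex-bodies},~\ref{lemma:equality-condition}), whence $\dualnorm{\alpha v}=\alpha\dualnorm{v}=0\neq 1$, a contradiction. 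Case~(i) needs separate handling because there $\inpnorm{v}=\error+\regulizer(\samplecost)^{1/\horder}=0$, i.e.\ $y\opt=\samplevec$ and $v=0$, so~\eqref{eq:unique-separator-solution} is vacuous: $\Depsdelta$-feasibility forces $\samplevec\in\image(\linmap)$, Lemma~\ref{lemma:coding-problem-guage-function-equivalent} gives $(\samplecost)^{1/\horder}=\norm{\samplevec}{\linmap}>0$, and the second identity of Definition~\ref{def:optimal-separator-set} reduces to $\inprod{\separator}{\samplevec}=\norm{\samplevec}{\linmap}$, which in the presence of $\dualnorm{\separator}=1$ is exactly the statement that $\inprod{\separator}{\cdot}$ supports $S_0(\linmap,\samplevec,0)=\atomscaled$ at $\samplevec$ --- this yields the stated characterisation. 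For non-emptiness together with the refinement that such a support functional can be chosen inside $\image(\linmap)$, I would invoke Lemma~\ref{lemma:coding-problem-reformulation-1} to place $\samplevec$ on the relative boundary of the compact convex set $\atomscaled$ within the subspace $\image(\linmap)$, apply a supporting-hyperplane argument inside $\image(\linmap)$, note $\inprod{\separator\opt}{\samplevec}>0$ because $0$ lies in the relative interior of $\atomscaled$, and normalise by $\dualnorm{\separator\opt}$.

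The main obstacle --- the one place demanding genuine argument rather than bookkeeping --- is the equivalence ``$\dualnorm{\samplevec-\linmap(\repvec_{\samplevec})}>0\iff\nbhood{\samplevec}{\error}\cap\image(\linmap)\neq\emptyset$'' when $\regulizer=0$: it hinges on recognising that $\dualnorm{\cdot}$ sees only the $\image(\linmap)$-component of its argument and on translating the vanishing of that component into a nearest-point / metric-projection statement. Everything else is a careful but routine assembly of Lemmas~\ref{lemma:uniqueness-of-intersection}, \ref{lemma:separation-dummy}, \ref{lemma:equality-condition}, \ref{lemma:optimal-separation-of-convex-bodies} together with the Hahn--Banach separation already in hand; in particular the verification $\separator\opt\in\separatorset$ in case~(ii) collapses, via~\eqref{eq:the-other-inequality }, to the single identity $\inprod{v}{\samplevec}-\error\inpnorm{v}=(\samplecost)^{1/\horder}\dualnorm{v}$.
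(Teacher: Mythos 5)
Your proposal is correct and follows essentially the same route as the paper's own proof: the same reduction of every element of \( \separatorset \) to a positive multiple of \( \samplevec - \linmap(\repvec_{\samplevec}) \) via Lemmas \ref{lemma:optimal-separation-of-convex-bodies} and \ref{lemma:equality-condition}, the same dichotomy on whether \( \dualnorm{\samplevec - \linmap(\repvec_{\samplevec})} \) is positive or zero (settled by Lemma \ref{lemma:separation-dummy}(i) when \( \regulizer>0 \) and by the orthogonal-projection argument when \( \regulizer=0 \)), and the same separate supporting-hyperplane treatment of the degenerate case \( \error=\regulizer=0 \). You correctly identify the equivalence \( \dualnorm{\samplevec-\linmap(\repvec_{\samplevec})}=0 \iff \nbhood{\samplevec}{\error}\cap\image(\linmap)=\emptyset \) as the one step requiring genuine argument, which is exactly where the paper also expends its effort.
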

\begin{proof}
If \( \error = \regulizer = 0 \), then \( \closednbhood{\samplevec}{\error} = \{ \samplevec \} \). In view of Lemma \ref{lemma:optimal-separation-of-convex-bodies} we know that \( \separator\opt \in \Lambda_0 (\linmap, \samplevec, 0) \) if and only if the linear functional \( \inprod{\separator\opt}{\cdot} \) supports the set \( S_0 (\linmap, \samplevec, 0) \) at \( \samplevec \), and satisfies \( \dualnorm{\separator\opt} = 1 \). It remains to be shown that the set \( \Lambda_0 (\linmap, \samplevec, 0) \) is non-empty, and we do so by showing that there exists \( \separator_{\linmap} \in \Lambda_0 (\linmap, \samplevec, 0) \cap \image (\linmap) \). Since \( \samplevec \) is \( (\linmap, 0, 0) \)-feasible, we have \( \samplevec \in \image (\linmap) \). We note from Lemma \ref{lemma:coding-problem-reformulation-1} that \( \encodedcost{\linmap}{\samplevec}{0}{0} \) is the least amount by which the set \( S_0 (\linmap,1) \) has to be linearly scaled so that it contains \( \samplevec \). This implies that \( \samplevec \) lies on the boundary of the set \( S_0 (\linmap, \samplevec, 0) \), i.e., \( \samplevec \notin \relinterior (S_0 (\linmap, \samplevec, 0)) \). In addition, since \( S_0 (\linmap, \samplevec, 0) \) is a convex subset of \( \image(\linmap) \), we know that there exists \( 0 \neq \separator_{\linmap} \in \image(\linmap) \) such that the linear functional \( \inprod{\separator_{\linmap}}{\cdot} \) supports the set \( S_0 (\linmap, \samplevec, 0) \) at the boundary point \( \samplevec \). As  result, we obtain:
\[
\begin{aligned}
\inprod{\separator_{\linmap}}{\samplevec} & = \max\limits_{z \in S_0 (\linmap, \samplevec, 0)} \inprod{\separator_{\linmap}}{z} = (\samplecost)^{1/\horder} \max\limits_{z \in S_0 (\linmap, 1)} \inprod{\separator_{\linmap}}{z} \\
& = (\samplecost)^{1/\horder} \dualnorm{\separator_{\linmap}} .
\end{aligned}
\]
Since \( S_0 (\linmap, 1) \) is an absorbing set to \( \image (\linmap) \) we have \( 0 \in \relinterior S_0 (\linmap, 1) \) and therefore \( 0 <  \dualnorm{\separator_{\linmap}} \). Thus, defining \( \separator\opt \define (1/ \dualnorm{\separator_{\linmap}})\separator_{\linmap} \) it readily follows that \( \separator\opt \in  \Lambda_0 (\linmap, \samplevec, 0) \). This establishes the assertion (i) of the proposition.

If either \( \error > 0 \) or \( \regulizer > 0 \), on the one hand we know from Lemma \ref{lemma:equality-condition} that \( ( \samplevec - \linmap (\repvec_{\samplevec}) ) \) satisfies
\[
\begin{aligned}
\inprod{\samplevec - \linmap (\repvec_{\samplevec})}{\samplevec} - \error \inpnorm{\samplevec - \linmap (\repvec_{\samplevec})} & = \max_{z \in \atomscaled} \inprod{\samplevec - \linmap (\repvec_{\samplevec})}{z} \\
& = (\samplecost)^{1/\horder} \dualnorm{\samplevec - \linmap (\repvec_{\samplevec})} .
\end{aligned}
\]
We immediately see that if \( \dualnorm{\samplevec - \linmap (\repvec_{\samplevec})} > 0 \), then \( \frac{\samplevec - \linmap (\repvec_{\samplevec})}{\dualnorm{\samplevec - \linmap (\repvec_{\samplevec})}} \in \separatorset \). On the other hand, if \( \separator\opt \in \separatorset \) then Lemma \ref{lemma:optimal-separation-of-convex-bodies} implies that \( \separator\opt \) must satisfy \eqref{eq:the-other-inequality }, and from Lemma \ref{lemma:equality-condition} we infer that \( \separator\opt = \alpha (\samplevec - \linmap (\repvec_{\samplevec})) \) for some \( \alpha > 0 \). From Definition \ref{def:optimal-separator-set} it immediately implies that if \(  \alpha (\samplevec - \linmap (\repvec_{\samplevec})) \in \separatorset \), then \( \dualnorm{ \samplevec - \linmap (\repvec_{\samplevec})} > 0 \) and \( \alpha = \frac{1}{\dualnorm{ \samplevec - \linmap (\repvec_{\samplevec})}} \). Thus the set \( \separatorset \) is non-empty, and is the singleton \( \left\{ \frac{\samplevec - \linmap (\repvec_{\samplevec})}{\dualnorm{\samplevec - \linmap (\repvec_{\samplevec})}} \right\} \) if and only if \( \dualnorm{\samplevec - \linmap (\repvec_{\samplevec})} > 0 \). 

We complete the proof by showing that \( \dualnorm{ \samplevec - \linmap (\repvec_{\samplevec}) } = 0 \) if and only if \( \regulizer = 0 \) and \( \nbhood{\samplevec}{\error} \cap \image (\linmap) = \emptyset \). On the one hand, if \( \regulizer = 0 \) and \( \nbhood{\samplevec}{\error} \cap \image (\linmap) = \emptyset \), then we have 
\[
\inpnorm{\samplevec - \pi_{\linmap} (\samplevec)} = \min_{z \in \image (\linmap)} \inpnorm{\samplevec - z} \geq \error .
\]
However, from Lemma \ref{lemma:uniqueness-of-intersection} we know that \( \inpnorm{\samplevec - \linmap (\repvec_{\samplevec}) } = \error \), and since \( \linmap (\repvec_{\samplevec}) \in \image (\linmap) \), we deduce that \( \pi_{\linmap} (\samplevec) = \linmap (\repvec_{\samplevec}) \).\footnote{\( \pi_{\linmap} : \hilbert \longrightarrow \image (\linmap) \) is the orthogonal projection operator onto \( \image (\linmap) \).} Due to orthogonality of projection, \( \inprod{\samplevec - \linmap (\repvec_{\samplevec}) }{z} = 0 \) for all \( z \in \image (\linmap) \). Since \( S_0 (\linmap , 1 ) \subset \image(\linmap) \), we obtain
\[
\dualnorm{\samplevec - \linmap (\repvec_{\samplevec}) } = \max_{z \in S_0 (\linmap, 1)} \inprod{\samplevec - \linmap (\repvec_{\samplevec}) }{z} = 0
\]
One the other hand,  if \( \dualnorm{\samplevec - \linmap (\repvec_{\samplevec}) } = 0 \), Lemma \ref{lemma:separation-dummy}(i) implies that \( \regulizer = 0 \). Moreover, since \( S_0 (\linmap, 1) \) is an absorbing set to \( \image(\linmap) \), we conclude from the definition \eqref{eq:dual-norm-definition} of the dual function that \( \inprod{\samplevec - \linmap (\repvec_{\samplevec}) }{z} = 0 \) for all \( z \in \image (\linmap) \). Furthermore, since \( \linmap (\repvec_{\samplevec}) \in \image (\linmap) \) it implies from the orthogonality principle that \( \pi_{\linmap} (\samplevec) = \linmap (\repvec_{\samplevec}) \). Consequently,
\[
\min_{z \in \image (\linmap)} \inpnorm{\samplevec - z} \ = \ \inpnorm{\samplevec - \pi_{\linmap} (\samplevec)} = \ \inpnorm{\samplevec - \linmap (\repvec_{\samplevec}) } \ = \ \error .
\]
In other words, we have \( \nbhood{\samplevec}{\error} \cap \image(\linmap) = \emptyset \). The proof is now complete.
\end{proof}

\begin{lemma}
\label{lemma:optimality-condition-of-hvar-and-separator}
Let the linear map \( \linmap : \R{\dsize} \longrightarrow \hilbert \) and non-negative real numbers \( \error, \regulizer \) be given, and \( \samplevec \in \hilbert \setminus \closednbhood{0}{\error} \) be any \( \Depsdelta \)-feasible vector such that \( \separatorset \neq \emptyset \). Then for every \( \separator\opt \in \separatorset \) and \( \hvar\opt \in \frac{1}{(\samplecost)^{1/\horder}} \cdot \codes \), we have
\begin{equation}
\label{eq:optimality-condition-of-hvar-and-separator}
\inprod{\separator\opt}{\linmap (\hvar\opt) } \ = \ \max_{h \in \costatomset} \ \inprod{\separator\opt}{\linmap (\hvar)} \ = \ 1 - \regulizer \inpnorm{\separator\opt} \; .
\end{equation}
\end{lemma}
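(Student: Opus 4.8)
The plan is to assemble the chain of equalities \eqref{eq:optimality-condition-of-hvar-and-separator} from three facts already established: the decomposition of the dual function in Lemma~\ref{lemma:separation-dummy}, the separation relation \eqref{eq:the-other-inequality } satisfied by every $\separator\opt \in \separatorset$ (Lemma~\ref{lemma:optimal-separation-of-convex-bodies}), and the explicit description of the intersection point $y\opt$ from Lemma~\ref{lemma:uniqueness-of-intersection}. Throughout, fix $\separator\opt \in \separatorset$, write $\mathsf{c}_{\samplevec} = (\samplecost)^{1/\horder}$, and pick $\repvec_{\samplevec} \in \codes$ so that $\hvar\opt = \frac{1}{\mathsf{c}_{\samplevec}} \repvec_{\samplevec}$; note that $\mathsf{c}_{\samplevec} > 0$, since $\samplevec \notin \closednbhood{0}{\error}$ forces $\inpnorm{\samplevec} > \error$ and hence $\samplecost > 0$ by \eqref{eq:zero-approximate-sample}.

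First I would dispose of the easy half. By positive homogeneity of $\cost$ (Assumption~\ref{assumption:cost-function}) together with $\cost(\repvec_{\samplevec}) \leq \mathsf{c}_{\samplevec}^{\horder}$ we get $\cost(\hvar\opt) \leq 1$, i.e. $\hvar\opt \in \costatomset$, so $\inprod{\separator\opt}{\linmap(\hvar\opt)} \leq \max_{\hvar \in \costatomset} \inprod{\separator\opt}{\linmap(\hvar)}$. Next, the defining conditions of $\separatorset$ give $\dualnorm{\separator\opt} = 1$, and Lemma~\ref{lemma:separation-dummy} rewrites this as $\regulizer \inpnorm{\separator\opt} + \max_{\hvar \in \costatomset} \inprod{\separator\opt}{\linmap(\hvar)} = 1$, that is, $\max_{\hvar \in \costatomset} \inprod{\separator\opt}{\linmap(\hvar)} = 1 - \regulizer \inpnorm{\separator\opt}$. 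This already delivers the right-hand equality in \eqref{eq:optimality-condition-of-hvar-and-separator}, and combined with the previous sentence it gives the inequality $\inprod{\separator\opt}{\linmap(\hvar\opt)} \leq 1 - \regulizer \inpnorm{\separator\opt}$.

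The substantive step is the matching lower bound $\inprod{\separator\opt}{\linmap(\hvar\opt)} \geq 1 - \regulizer \inpnorm{\separator\opt}$. By Lemma~\ref{lemma:optimal-separation-of-convex-bodies}, $\separator\opt$ satisfies \eqref{eq:the-other-inequality }; since $\atomscaled = \mathsf{c}_{\samplevec} \cdot \atomball$ and $\dualnorm{\separator\opt} = 1$, the maximum side of \eqref{eq:the-other-inequality } equals $\mathsf{c}_{\samplevec} \dualnorm{\separator\opt} = \mathsf{c}_{\samplevec}$, so $\inprod{\separator\opt}{y\opt} = \mathsf{c}_{\samplevec}$, where $y\opt$ is the unique point of $\closednbhood{\samplevec}{\error} \cap \atomscaled$ from Lemma~\ref{lemma:uniqueness-of-intersection}. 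The key observation is that $y\opt \in \closednbhood{\linmap(\repvec_{\samplevec})}{\regulizer \mathsf{c}_{\samplevec}} \subseteq \atomscaled$ (as recorded in the proof of Lemma~\ref{lemma:equality-condition}, using $\repvec_{\samplevec} \in \mathsf{c}_{\samplevec} \cdot \costatomset$ and $\atomball = \bigcup_{\hvar \in \costatomset} \closednbhood{\linmap(\hvar)}{\regulizer}$). Hence the functional $\inprod{\separator\opt}{\cdot}$, whose maximum over $\atomscaled$ is attained at $y\opt$, also attains its maximum over the subball $\closednbhood{\linmap(\repvec_{\samplevec})}{\regulizer \mathsf{c}_{\samplevec}}$ at $y\opt$; evaluating that maximum via the Hilbert-norm duality $\max_{z \in \closednbhood{a}{\rho}} \inprod{\separator\opt}{z} = \inprod{\separator\opt}{a} + \rho \inpnorm{\separator\opt}$ yields
\[
\mathsf{c}_{\samplevec} \ = \ \inprod{\separator\opt}{y\opt} \ = \ \inprod{\separator\opt}{\linmap(\repvec_{\samplevec})} \ + \ \regulizer \mathsf{c}_{\samplevec} \inpnorm{\separator\opt} .
\]
Dividing by $\mathsf{c}_{\samplevec} > 0$ and using $\linmap(\hvar\opt) = \frac{1}{\mathsf{c}_{\samplevec}} \linmap(\repvec_{\samplevec})$ gives $\inprod{\separator\opt}{\linmap(\hvar\opt)} = 1 - \regulizer \inpnorm{\separator\opt}$, which together with the second paragraph closes the chain \eqref{eq:optimality-condition-of-hvar-and-separator}.

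I do not anticipate a genuine obstacle: the argument is a uniform assembly and goes through verbatim in the degenerate cases $\error = 0$ and/or $\regulizer = 0$ (where $\closednbhood{\linmap(\repvec_{\samplevec})}{\regulizer \mathsf{c}_{\samplevec}}$ collapses to $\{\linmap(\repvec_{\samplevec})\}$ and $y\opt = \linmap(\repvec_{\samplevec})$). The only detail worth a remark is that the conclusion does not depend on the chosen $\repvec_{\samplevec} \in \codes$ nor on the chosen $\separator\opt \in \separatorset$ --- the former because $\linmap(\repvec_{\samplevec})$ is common to all elements of $\codes$ by Lemma~\ref{lemma:uniqueness-of-intersection}, and the latter because the displayed computation used nothing about $\separator\opt$ beyond membership in $\separatorset$.
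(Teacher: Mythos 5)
Your proof is correct, and its skeleton matches the paper's: both arguments obtain \( \max_{\hvar \in \costatomset} \inprod{\separator\opt}{\linmap(\hvar)} = 1 - \regulizer \inpnorm{\separator\opt} \) from \( \dualnorm{\separator\opt} = 1 \) via Lemma~\ref{lemma:separation-dummy}, and both reduce the remaining equality to the identity \( \inprod{\separator\opt}{y\opt} = (\samplecost)^{1/\horder} \) supplied by the separation relation \eqref{eq:the-other-inequality } and Lemma~\ref{lemma:optimal-separation-of-convex-bodies}. Where you genuinely diverge is the final evaluation. The paper substitutes the explicit convex-combination formula for \( y\opt \) from Remark~\ref{remark:alternate-expression-for-intersection-point} and is then left with the cross term \( \inprod{\separator\opt}{\samplevec - \mathsf{c}_{\samplevec} \linmap(\hvar\opt)} \), which it evaluates by invoking Proposition~\ref{proposition:description-of-the-separator-set} (colinearity of \( \separator\opt \) with the residual) together with the activeness of the error constraint from Lemma~\ref{lemma:uniqueness-of-intersection}. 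You instead note that \( y\opt \) lies in the sub-ball \( \closednbhood{\linmap(\repvec_{\samplevec})}{\regulizer \mathsf{c}_{\samplevec}} \subset \atomscaled \), so the functional \( \inprod{\separator\opt}{\cdot} \), which attains its maximum over \( \atomscaled \) at \( y\opt \), attains its maximum over the sub-ball there as well, and the support function of a ball is explicit. This is precisely the device the paper itself uses inside the proof of Lemma~\ref{lemma:equality-condition}, and deploying it here buys a lighter dependency graph: you never need the full characterization of \( \separatorset \) in Proposition~\ref{proposition:description-of-the-separator-set}, only the separation relation and containments already recorded in Lemmas~\ref{lemma:uniqueness-of-intersection} and \ref{lemma:equality-condition}. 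Your handling of the degenerate cases \( \regulizer = 0 \) and \( \error = 0 \) (the sub-ball collapsing to \( \{\linmap(\repvec_{\samplevec})\} \) with \( y\opt = \linmap(\repvec_{\samplevec}) \)) is consistent with the paper's indicator bookkeeping, so the argument is uniform as claimed.
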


\begin{proof}
Applying \eqref{eq:maximization-over-atomball-changed-to-hvar} directly to \( \separator\opt \in \separatorset \) gives us
\begin{equation}
\label{eq:optimality-condition-of-hvar-and-separator-1}
\max_{\hvar \in \costatomset} \ \inprod{\separator\opt}{\linmap (\hvar)} \ = \ - \regulizer \inpnorm{\separator\opt} + \dualnorm{\separator\opt} \ = \ 1 - \regulizer \inpnorm{\separator\opt} .
\end{equation}
By denoting \( \mathsf{c}_{\samplevec} = (\samplecost)^{1/\horder} \), we know from \eqref{eq:the-other-inequality } that
\[
\inprod{\separator\opt}{y\opt} = \max_{z \in \atomscaled} \inprod{\separator\opt}{z} = \; \mathsf{c}_{\samplevec} \max_{z \in \atomball} \inprod{\separator\opt}{z} = \; \mathsf{c}_{\samplevec} \dualnorm{\separator\opt} = \mathsf{c}_{\samplevec} .
\]
On substituting for \( y\opt \) by considering \( \repvec_{\samplevec} = \mathsf{c}_{\samplevec} \hvar\opt \) in Remark \ref{remark:alternate-expression-for-intersection-point}, we get
\begin{equation}
\label{eq:dummy-eq-lemma:optimality-condition-of-hvar-and-separator}
\mathsf{c}_{\samplevec} = \inprod{\separator\opt}{y\opt} = \mathsf{c}_{\samplevec} \inprod{\separator\opt}{\linmap (\hvar\opt) } + \frac{\regulizer \mathsf{c}_{\samplevec}}{\error + \regulizer \mathsf{c}_{\samplevec}} \inprod{\separator\opt}{\big( \samplevec - \mathsf{c}_{\samplevec} \linmap (\hvar\opt) \big)} \indicator_{ ]0, +\infty[ } (\regulizer) .
\end{equation}
Whenever \( \regulizer > 0 \) we know from Proposition \ref{proposition:description-of-the-separator-set} that \( \separator\opt \) and \( \big( \samplevec - \mathsf{c}_{\samplevec} \linmap (\hvar\opt) \big) \) are co-linear. Thus, we obtain that:
\[
\inprod{\separator\opt}{ \big( \samplevec - \mathsf{c}_{\samplevec} \linmap (\hvar\opt) \big) } = \inpnorm{\separator\opt} \inpnorm{\samplevec - \mathsf{c}_{\samplevec} \linmap (\hvar\opt) } = (\error + \regulizer \mathsf{c}_{\samplevec}) \inpnorm{\separator\opt} ,
\]
where the last equality follows from Lemma \ref{lemma:uniqueness-of-intersection}. Note that \( \mathsf{c}_{\samplevec} > 0 \) since \( \inpnorm{\samplevec} > \error \). Therefore, cancelling \( \mathsf{c}_{\samplevec} \) throughout in \eqref{eq:dummy-eq-lemma:optimality-condition-of-hvar-and-separator} and simplifying for \( \inprod{\separator\opt}{\linmap (\hvar\opt) } \) yields
\begin{equation}
\label{eq:optimality-condition-of-hvar-and-separator-2}
\inprod{\separator\opt}{\linmap (\hvar\opt) } \; = \; 1 - \Big( \regulizer \inpnorm{\separator\opt} \indicator_{ ]0, +\infty[ } (\regulizer) \Big) \; = \; 1 - \regulizer \inpnorm{\separator\opt} ,
\end{equation}
\eqref{eq:optimality-condition-of-hvar-and-separator} follows at once from \eqref{eq:optimality-condition-of-hvar-and-separator-1} and \eqref{eq:optimality-condition-of-hvar-and-separator-2}.
\end{proof}

\begin{proof}[\rm{\textbf{Lemma \ref{lemma:coding-problem-guage-function-equivalent}}}]
If \( \samplevec \) is not \( \Depsdelta \)-feasible, then we know that \( \regulizer = 0 \) and \( \closednbhood{\samplevec}{\error} \cap \image (\linmap) = \emptyset \). Consequently, \( \norm{y}{\linmap} = +\infty \) for all \( y \in \closednbhood{\samplevec}{\error} \). Therefore, the assertion holds since \( \samplecost = +\infty \).

If \( \samplevec \) is \( \Depsdelta \)-feasible, then from Lemma \ref{lemma:uniqueness-of-intersection}, we know that the sets \( \closednbhood{\samplevec}{\error} \) and \( \atomscaled \) intersect at a unique point \( y\opt \). Thus we have
\[
\min_{y \; \in \;  \closednbhood{\samplevec}{\error}} \; \norm{y}{\linmap} \leq \; \norm{y\opt}{\linmap} \leq (\samplecost)^{1/\horder} ,
\]
where the first inequality follows from the fact that \( y\opt \in \closednbhood{\samplevec}{\error} \) and the second one follows from \( y\opt \in (\samplecost)^{1/\horder} \cdot \atomball \) and the definition \eqref{eq:guage-function-definition} of the guage function \( \norm{\cdot}{\linmap} \).

On the one hand, for \( y \in \closednbhood{\samplevec}{\error} \) such that \( \norm{y}{\linmap} = +\infty \), the inequality \( (\samplecost)^{1/\horder} \leq \norm{y}{\linmap} \) holds trivially. On the other hand, for \( y \in \closednbhood{\samplevec}{\error} \) such that \( \norm{y}{\linmap} < +\infty \), we know from the definition \eqref{eq:guage-function-definition} that \( y \in \atomset{\norm{y}{\linmap}} \). Thus, \( \closednbhood{\samplevec}{\error} \cap \atomset{\norm{y}{\linmap}} \neq \emptyset \), and in view of Lemma \ref{lemma:coding-problem-reformulation-1}, we get \( (\samplecost)^{1/\horder} \leq \norm{y}{\linmap} \). Combining the two facts, we conclude 
\[
(\samplecost)^{1/\horder} \; \leq \; \min_{y \; \in \;  \closednbhood{\samplevec}{\error}} \; \norm{y}{\linmap} .
\]
Collecting the two inequalities, \eqref{eq:coding-problem-guage-function-equivalent} follows at once.
\end{proof}

\begin{remark} \rm{
The proof of the lemma also implies that \( \norm{y\opt}{\linmap} = (\samplecost)^{1/\horder} \), and therefore, \( y\opt \) is a minimizer in the problem \eqref{eq:coding-problem-guage-function-equivalent}. Furthermore, if \( y' \neq y\opt \) is also a minimizer, then we have \( \norm{y'}{\linmap} = (\samplecost)^{1/\horder} \) and \( y' \in \closednbhood{\samplevec}{\error} \). Then it follows that \( y' \in \atomset{\norm{y'}{\linmap}} = \atomscaled \), and thus \( y' \in \closednbhood{\samplevec}{\error} \cap \atomscaled \). From Lemma \ref{lemma:uniqueness-of-intersection}, we then have \( y' = y\opt \). Which is a contradiction. Thus,
\[
y\opt = \argmin_{y \; \in \; \closednbhood{\samplevec}{\error}} \ \norm{y}{\linmap}.
\]
} \end{remark}

\begin{proof}[\rm{\textbf{Theorem \ref{theorem:coding-problem-equivalent-sup-problem}}}]
Combining \eqref{eq:coding-problem-guage-function-equivalent} and \eqref{eq:strong-duality-of-guage-function}, we obtain
\begin{equation}
\label{eq:separation-principle-formulation-3}
\begin{aligned}
(\samplecost)^{1/\horder} 
& = \min\limits_{y \in \closednbhood{\samplevec}{\error} } \ \sup_{ \dualnorm{\separator} \leq 1 } \ \inprod{\separator}{y} \\
& \geq \ \sup_{ \dualnorm{\separator} \leq 1 } \ \min\limits_{y \in \closednbhood{\samplevec}{\error} } \ \inprod{\separator}{y} \\
& \geq
\begin{cases}
\begin{aligned}
& \sup_{\separator} && \inprod{\separator}{\samplevec} - \error \inpnorm{\separator}  \\
&\sbjto && \dualnorm{\separator} \leq 1 .
\end{aligned}
\end{cases}
\end{aligned}
\end{equation}
Therefore, \( (\samplecost)^{1/\horder} \) is an upper bound to the optimal value of \eqref{eq:coding-problem-equivalent-sup-problem}. We shall establish the proposition by considering all the possible cases and showing that the upper bound is indeed the supremum.

\textsf{Case 1:  When \( \samplevec \) is not \( \Depsdelta \)-feasible.} We know that this happens only if \( \regulizer = 0 \) and \( \closednbhood{\samplevec}{\error} \cap \image(\linmap) = \emptyset \). Denoting \( \pi_{\linmap} (\samplevec) \) to be the orthogonal projection of \( \samplevec \) onto \( \image(\linmap) \), we have \( \inprod{\samplevec - \pi_{\linmap} (\samplevec)}{z} = 0 \) for every \( z \in \image (\linmap) \). 

Since \( \regulizer = 0 \) we have \( S_0 (\linmap , 1) \subset \image (\linmap) \). Thus, for every \( \alpha \geq 0 \), letting \( \separator'_{\alpha} \define \alpha (\samplevec - \pi_{\linmap} (\samplevec) ) \) we see that \( \inprod{\separator'_{\alpha}}{z} = 0 \) for every \( z \in S_0 (\linmap , 1) \). In other words, we have \( \norm{\separator_{\alpha}}{\linmap}' = 0 \), and therefore, \( \separator'_{\alpha} \) is a feasible point in \eqref{eq:coding-problem-equivalent-sup-problem} for every \( \alpha \geq 0 \). Moreover, since \( \closednbhood{\samplevec}{\error} \cap \image(\linmap) = \emptyset \) we see that \( \inpnorm{\samplevec - \pi_{\linmap} (\samplevec) } \geq \error + \rho \) for some \( \rho > 0 \). Therefore, the objective function of \eqref{eq:coding-problem-equivalent-sup-problem} evaluated at \( \separator_{\alpha} \) satisfies
\[
\begin{aligned}
\inprod{\separator'_{\alpha}}{\samplevec} - \error \inpnorm{\separator'_{\alpha}} 
& = \alpha \Big( \inprod{\samplevec - \pi_{\linmap} (\samplevec)}{\samplevec}  - \error \inpnorm{ \samplevec - \pi_{\linmap} (\samplevec) } \Big) \\
& = \alpha \Big( \inpnorm{\samplevec - \pi_{\linmap} (\samplevec)}^2 + \inprod{\samplevec - \pi_{\linmap} (\samplevec)}{\pi_{\linmap} (\samplevec)}  - \error \inpnorm{ \samplevec - \pi_{\linmap} (\samplevec) } \Big) \\
& = \alpha \inpnorm{ \samplevec - \pi_{\linmap} (\samplevec) } \Big( \inpnorm{\samplevec - \pi_{\linmap} (\samplevec)} - \error \Big) \\
& \geq \alpha (\error + \rho) \rho . 
\end{aligned}
\]
By considering arbitrarily large value of \( \alpha \), we observe that the cost function in \eqref{eq:coding-problem-equivalent-sup-problem} attains arbitrarily large values for \( \separator'_{\alpha} \), i.e., the supremum is \( + \infty \).

\textsf{Case 2: When \( 0 \leq \inpnorm{\samplevec} \leq \error \).} We know that the optimal cost \( \samplecost \) is identically equal to zero, and we shall conclude that so is the value of the supremum in \eqref{eq:coding-problem-equivalent-sup-problem}. Indeed, since \( 0 \in \closednbhood{\samplevec}{\error} \), for every \( \separator \in \hilbert \) we have
\[
\inprod{\separator}{x} - \error \inpnorm{\separator} = \min\limits_{y \in \closednbhood{\samplevec}{\error}} \inprod{\separator}{y} \ \leq \ \inprod{\separator}{0} \ = \ 0 .
\]
Thus, zero is an upper bound for the supremum in \eqref{eq:coding-problem-equivalent-sup-problem}. Moreover, for \( \separator\opt = 0 \), we have \( \dualnorm{\separator\opt} = 0 \) and \( \inprod{\separator\opt}{\samplevec} - \error \inpnorm{\separator\opt} = 0 \). Thus, the value of the supremum is achieved, and \( \separator\opt = 0 \) is an optimal solution.\footnote{It is to be to be noted that whenever \( \inpnorm{\samplevec} = \error \), there could be non-zero optimal solutions, for e.g., \( \separator\opt = \alpha \samplevec \) for every \( \alpha \geq 0 \).}

\textsf{Case 3: When \( \samplevec \) is a \( \Depsdelta \)-feasible, and \( \inpnorm{\samplevec} > \error \) with \( \separatorset \neq \emptyset \).} We know that there exists a \( \separator\opt \in \separatorset \) and the following two conditions hold simultameously:
\begin{align*}
\dualnorm{\separator\opt} \; & = \; 1 , \text{ and} \\
\inprod{\separator\opt}{\samplevec} - \error \inpnorm{\separator\opt} \; & = \; (\samplecost)^{1/\horder} .
\end{align*}
The first equality implies that \( \separator\opt \) is a feasible point to \eqref{eq:coding-problem-equivalent-sup-problem}, and the latter, in conjunction with \eqref{eq:separation-principle-formulation-3} implies that the upper bound of \( (\samplecost)^{1/\horder} \) is achieved at \( \separator\opt \). Thus, \( (\samplecost)^{1/\horder} \) is indeed the optimum value of \eqref{eq:coding-problem-equivalent-sup-problem}, and that every \( \separator\opt \in \separatorset \) is an optimal solution to \eqref{eq:coding-problem-equivalent-sup-problem}.

Conversely, if \( \separator\opt \) is an optimal solution to \eqref{eq:coding-problem-equivalent-sup-problem}, then readily we get \( \inprod{\separator\opt}{\samplevec} - \error \inpnorm{\separator\opt} = (\samplecost)^{1/\horder} \). It suffices to show that \( \dualnorm{\separator\opt} = 1 \). Since \( (\samplecost)^{1/\horder} > 0 \), we have \( \inprod{\separator\opt}{\samplevec} - \error \inpnorm{\separator\opt} > 0 \). Therefore, from the assertions (i) and (ii) of Lemma \ref{lemma:separation-dummy}, we conclude that \( \dualnorm{\separator\opt} > 0 \). Moreover, if \( \dualnorm{\separator\opt} < 1 \), then \( \separator' \define \frac{1}{\dualnorm{\separator\opt}} \separator\opt \) is also a feasible point to \eqref{eq:coding-problem-equivalent-sup-problem}. However, the cost function evaluated at \( \separator' \) satisfies
\[
\inprod{\separator'}{\samplevec} - \error \inpnorm{\separator'} = \frac{1}{\dualnorm{\separator\opt}} \big( \inprod{\separator\opt}{\samplevec} - \error \inpnorm{\separator\opt} \big) > (\samplecost)^{1/\horder} ,
\]
which is a contradiction. Therefore, it follows at once that \( \separator\opt \in \separatorset \).

\textsf{Case 4: When \( \samplevec \) is a \( \Depsdelta \)-feasible vector and \( \inpnorm{\samplevec} > \error \) with \( \separatorset = \emptyset \).} We know from Proposition \ref{proposition:description-of-the-separator-set} that this happens only if \( \regulizer = 0 \) and \( \nbhood{\samplevec}{\error} \cap \image (\linmap) = \emptyset \). Since \( \samplevec \) is \( \Depsdelta \)-feasible, the set \( \closednbhood{\samplevec}{\error} \) intersects \( \image (\linmap) \) only at the point \( \pi_{\linmap} (\samplevec) \) - the orthogonal projection of \( \samplevec \) onto \( \image (\linmap) \). Since no point other than \( \pi_{\linmap} (\samplevec) \) in \( \closednbhood{\samplevec}{\error} \) intersects with \( \image(\linmap) \), the LIP \eqref{eq:coding-problem} reduces to the following:
\[
\begin{cases}
\begin{aligned}
& \minimize_{\repvec \; \in \; \R{\dsize}} && \cost (\repvec) \\
& \sbjto && \linmap (\repvec) = \pi_{\linmap} (\samplevec) ,
\end{aligned}
\end{cases}
\]
which simply is another LIP with parameters \( \pi_{\linmap} (\samplevec) \), \( \linmap \) and \( \error = \regulizer = 0 \). Since, \( \pi_{\linmap} (\samplevec) \in \image (\linmap) \), \( \pi_{\linmap} (\samplevec) \) is \( (\linmap, 0, 0) \)-feasible. Therefore, \( \encodedcost{\linmap}{\samplevec}{\error}{0} = \encodedcost{\linmap}{\pi_{\linmap} (\samplevec)}{0}{0} \) and \( \encodermap{\linmap}{\samplevec}{\error}{0} = \encodermap{\linmap}{\pi_{\linmap} (\samplevec)}{0}{0} \). In addition, from the Proposition \ref{proposition:description-of-the-separator-set} it follows that the set \( \Lambda_0 (\linmap, \pi_{\linmap} (\samplevec), 0) \) is non-empty, and there exists \( \separator' \in \image(\linmap) \) such that the following two conditions hold simultaneously.
\[
   \inprod{\separator'}{\pi_{\linmap} (\samplevec)} = \big( \encodedcost{\linmap}{\pi_{\linmap} (\samplevec)}{0}{0} \big)^{1/\horder} =  \big( \encodedcost{\linmap}{\samplevec}{\error}{0} \big)^{1/\horder} \; \text{and} \; \dualnorm{\separator'} = 1
\]
Using the above facts, we shall first establish that the value \( (\samplecost)^{1/\horder} \) is not just an upper bound but is indeed the supremum in \eqref{eq:coding-problem-equivalent-sup-problem}.

For every \( \alpha \geq 0 \) let \( \separator (\alpha) \define \separator' + \alpha (\samplevec - \pi_{\linmap} (\samplevec)) \). Since the linear functional \( \inprod{\samplevec - \pi_{\linmap} (\samplevec)}{\cdot} \) vanishes on \( \image (\linmap) \), for every \( z \in \image (\linmap) \) we have
\[
\begin{aligned}
& \inprod{\separator (\alpha)}{z} \; = \; \inprod{\separator' }{z} + \alpha \inprod{\samplevec - \pi_{\linmap} (\samplevec)}{z} \; = \; \inprod{\separator' }{z} , \ \text{and therefore,} \\
& \dualnorm{\separator (\alpha)} = \max_{z \in S_0 (\linmap , 1)} \inprod{\separator (\alpha)}{z} = \max_{z \in S_0 (\linmap , 1)} \inprod{\separator'}{z} = \dualnorm{\separator'} = 1 .
\end{aligned}
\]
Thus, \( \separator (\alpha) \) is a feasible point to \eqref{eq:coding-problem-equivalent-sup-problem}, and the cost function evaluated at \( \separator (\alpha) \) satisfies:
\[
\begin{aligned}
\inprod{\separator(\alpha)}{\samplevec} - \error \inpnorm{\separator(\alpha)} & = \inprod{\separator(\alpha)}{\pi_{\linmap} (\samplevec)} + \inprod{\separator(\alpha)}{\samplevec - \pi_{\linmap} (\samplevec)} - \error \inpnorm{\separator(\alpha)} \\
& = \inprod{\separator'}{\pi_{\linmap} (\samplevec)} + \alpha \inpnorm{\samplevec - \pi_{\linmap} (\samplevec)}^2 - \error \sqrt{\inpnorm{\separator'}^2 + \alpha^2 \inpnorm{\samplevec - \pi_{\linmap} (\samplevec)}^2} \\
& = \big( \encodedcost{\linmap}{\samplevec}{\error}{0} \big)^{1/\horder} + \error \Big( \alpha \error - \sqrt{\inpnorm{\separator'}^2 + \alpha^2 \error^2} \Big).
\end{aligned}
\]
Since \( \separator(\alpha) \) is feasible in \eqref{eq:coding-problem-equivalent-sup-problem} for every \( \alpha \geq 0 \), the supremum in \eqref{eq:coding-problem-equivalent-sup-problem} is sandwitched between \( \sup\limits_{\alpha \geq 0} \ \inprod{\separator(\alpha)}{\samplevec} - \error \inpnorm{\separator(\alpha)} \) and the optimal cost \( \big( \encodedcost{\linmap}{\samplevec}{\error}{0} \big)^{1/\horder} \). However, we see that:
\[
\sup\limits_{\alpha \geq 0} \ \inprod{\separator(\alpha)}{\samplevec} - \error \inpnorm{\separator(\alpha)} 
\ \geq \ \lim_{\alpha \to +\infty} \inprod{\separator(\alpha)}{\samplevec} - \error \inpnorm{\separator(\alpha)} \ = \ \big( \encodedcost{\linmap}{\samplevec}{\error}{0} \big)^{1/\horder} .\footnote{For any \( b > 0 \), we see that 
\[
\lim_{\alpha \to +\infty} \left( \alpha - \sqrt{b + \alpha^2} \right) = \lim_{\alpha \to +\infty} \frac{ \left( \alpha - \sqrt{b + \alpha^2} \right)\left( \alpha + \sqrt{b + \alpha^2} \right) }{\left( \alpha + \sqrt{b + \alpha^2} \right)} = \lim_{\alpha \to +\infty} \frac{- b}{\alpha + \sqrt{\alpha^2 + b}} = 0 .
\]}
\]
This implies that the supremum in \eqref{eq:coding-problem-equivalent-sup-problem} is indeed equal to \( ( \encodedcost{\linmap}{\samplevec}{\error}{0} )^{1/\horder} \). 

Now that we know the value of the supremum, it suffices to establish that \eqref{eq:coding-problem-equivalent-sup-problem} does not admit an optimal solution in this case. If there were any \( \separator' \) that is an optimal solution to \eqref{eq:coding-problem-equivalent-sup-problem}, then from the arguments provided in the proof of necessary implication for case 3, it follows that \( \separator' \in \Lambda_0 (\linmap, \samplevec, \error) \). This contradicts the premise \( \Lambda_0 (\linmap, \samplevec, \error) = \emptyset \). Therefore, \eqref{eq:coding-problem-equivalent-sup-problem} admits no solution whenever \( \Lambda_0 (\linmap, \samplevec, \error) = \emptyset \).
\end{proof}

\begin{lemma}
\label{lemma:sup-problem-1}
Let the linear map \( \linmap : \R{\dsize} \longrightarrow \hilbert \), non-negative real numbers \( \error , \regulizer \) and \( \samplevec \in \hilbert \setminus \closednbhood{0}{\error} \) be given. For every \( \hvar \in \costatomset \), consider the optimization problem
\begin{equation}
\label{eq:sup-problem-1}
\begin{cases}
\begin{aligned}
& \sup_{\separator} && \inprod{\separator}{\samplevec} - \error \inpnorm{\separator} \\
& \sbjto &&
\begin{cases}
\inprod{\separator}{\samplevec} - \error \inpnorm{\separator} \; > \; 0 , \\
\inprod{\separator}{\linmap (\hvar)} + \regulizer \inpnorm{\separator} \ \leq \ 1 .
\end{cases}
\end{aligned}
\end{cases}
\end{equation}
\begin{enumerate}[leftmargin = * , label = \rm{(\roman*)}]
 \item The optimal value of \eqref{eq:sup-problem-1} is equal to
\begin{equation}
\label{eq:eta-h-definition}
\dualvar_{\hvar} \define \min \big\{ \theta \geq 0 : \closednbhood{\samplevec}{\error} \cap \closednbhood{\linmap (\theta \hvar)}{\theta \regulizer} \neq \emptyset \big\} .
\end{equation}

\item \( \dualvar_{\hvar} \geq (\samplecost)^{1/\horder} \) and equality holds if and only if \( h \in \frac{1}{(\samplecost)^{1/\horder}} \codes \).

\item \( \dualvar_{\hvar} = +\infty \) if and only if there exists a \( \separator' \in \hilbert \) that simultaneously satisfies the conditions
\begin{itemize}
    \item \( \inprod{\separator'}{\samplevec} - \error \inpnorm{\separator'} > 0 \)
    \item \( \inprod{\separator'}{\linmap (\hvar)} + \regulizer \inpnorm{\separator'} \leq 0 \).
\end{itemize}
\end{enumerate}
\end{lemma}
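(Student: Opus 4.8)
The plan is to recast \eqref{eq:sup-problem-1} as a question about intersecting the two compact convex bodies $\closednbhood{\samplevec}{\error}$ and suitably scaled copies of $B_{\hvar} \define \closednbhood{\linmap(\hvar)}{\regulizer}$, and then to re-run, for the single body $B_{\hvar}$ in place of $\atomball$, the same separation arguments already used for $\atomball$. First I would record the two homogeneity-degree-one identities
\[
\inprod{\separator}{\samplevec} - \error\inpnorm{\separator} = \min_{y \in \closednbhood{\samplevec}{\error}} \inprod{\separator}{y}, \qquad \inprod{\separator}{\linmap(\hvar)} + \regulizer\inpnorm{\separator} = \max_{z \in B_{\hvar}} \inprod{\separator}{z}
\]
(the first already appears in the proof of Lemma \ref{lemma:separation-dummy}), write $f(\separator)$ and $g_{\hvar}(\separator)$ for the two sides, and note that $\closednbhood{\linmap(\theta\hvar)}{\theta\regulizer} = \theta B_{\hvar}$, so that $\dualvar_{\hvar} = \min\{\theta \geq 0 : \closednbhood{\samplevec}{\error} \cap \theta B_{\hvar} \neq \emptyset\}$, with the convention $\dualvar_{\hvar} = +\infty$ if no such $\theta$ exists. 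I would also dispose at the outset of the case where $\samplevec$ is not $\Depsdelta$-feasible: then $\regulizer = 0$, every $\theta B_{\hvar} \subseteq \image(\linmap)$ misses $\closednbhood{\samplevec}{\error}$, and all three quantities ($\dualvar_{\hvar}$, $(\samplecost)^{1/\horder}$, the value of \eqref{eq:sup-problem-1}) are $+\infty$.

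For part (i) I would sandwich the value $V$ of \eqref{eq:sup-problem-1}. The inequality $V \leq \dualvar_{\hvar}$ is a H\"older-type estimate: for feasible $\separator$ and any admissible $\theta$, picking $y \in \closednbhood{\samplevec}{\error} \cap \theta B_{\hvar}$ gives $f(\separator) \leq \inprod{\separator}{y} \leq \theta g_{\hvar}(\separator) \leq \theta$. For $V \geq \dualvar_{\hvar}$ in the finite case, I would note $\dualvar_{\hvar} > 0$ (because $\inpnorm{\samplevec} > \error$ keeps $0$ out of $\closednbhood{\samplevec}{\error}$ while $\theta B_{\hvar} \to \{0\}$ as $\theta \downarrow 0$), and for each $\theta \in\, ]0, \dualvar_{\hvar}[$ strictly separate the disjoint compact convex sets $\closednbhood{\samplevec}{\error}$ and $\theta B_{\hvar}$ by some $\separator_{\theta} \neq 0$, obtaining $\theta g_{\hvar}(\separator_{\theta}) < f(\separator_{\theta})$. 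This forces $g_{\hvar}(\separator_{\theta}) > 0$ (otherwise $\separator_{\theta}$, rescaled, is feasible for \eqref{eq:sup-problem-1} with arbitrarily large value, contradicting the just-proved $V \leq \dualvar_{\hvar} < +\infty$), so $\separator_{\theta}/g_{\hvar}(\separator_{\theta})$ is feasible with value $> \theta$, and letting $\theta \uparrow \dualvar_{\hvar}$ gives $V \geq \dualvar_{\hvar}$.

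For part (ii), since $\hvar \in \costatomset$ the definition \eqref{eq:atomic-ball-definition} of $\atomball$ gives $B_{\hvar} \subseteq \atomball$, hence $\theta B_{\hvar} \subseteq \atomset{\theta}$ for all $\theta$, and comparison with Lemma \ref{lemma:coding-problem-reformulation-1} yields $\dualvar_{\hvar} \geq (\samplecost)^{1/\horder}$. For the equality case write $\mathsf{c} = (\samplecost)^{1/\horder}$. If $\hvar \in \frac{1}{\mathsf{c}}\codes$, set $\repvec_{\samplevec} = \mathsf{c}\hvar \in \codes$; the unique intersection point $y\opt$ of $\closednbhood{\samplevec}{\error}$ and $\atomscaled$ from Lemma \ref{lemma:uniqueness-of-intersection}, written as in Remark \ref{remark:alternate-expression-for-intersection-point} and combined with the active-constraint identity $\inpnorm{\samplevec - \linmap(\repvec_{\samplevec})} = \error + \regulizer\mathsf{c}$, satisfies $\inpnorm{y\opt - \linmap(\repvec_{\samplevec})} \leq \regulizer\mathsf{c}$, i.e.\ $y\opt \in \mathsf{c}B_{\hvar}$, so $\closednbhood{\samplevec}{\error} \cap \mathsf{c}B_{\hvar} \neq \emptyset$ and $\dualvar_{\hvar} \leq \mathsf{c}$. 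Conversely if $\dualvar_{\hvar} = \mathsf{c}$, pick $y \in \closednbhood{\samplevec}{\error} \cap \mathsf{c}B_{\hvar}$, set $\repvec_{\samplevec} = \mathsf{c}\hvar$; then $\cost(\repvec_{\samplevec}) = \mathsf{c}^{\horder}\cost(\hvar) \leq \mathsf{c}^{\horder} = \samplecost$ and $\inpnorm{\samplevec - \linmap(\repvec_{\samplevec})} \leq \inpnorm{\samplevec - y} + \inpnorm{y - \mathsf{c}\linmap(\hvar)} \leq \error + \regulizer\mathsf{c}$, so $(\mathsf{c}, \repvec_{\samplevec})$ is feasible for \eqref{eq:coding-problem} with objective $\samplecost$, hence optimal, giving $\repvec_{\samplevec} \in \codes$ and $\hvar \in \frac{1}{\mathsf{c}}\codes$.

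For part (iii), the ``if'' direction is immediate: such a $\separator'$ rescaled is feasible for \eqref{eq:sup-problem-1} with unbounded value, so $V = +\infty$ and $\dualvar_{\hvar} = +\infty$ by (i). For ``only if'', $\dualvar_{\hvar} = +\infty$ means $\closednbhood{\samplevec}{\error}$ is disjoint from the convex cone $\bigcup_{\theta \geq 0}\theta B_{\hvar}$; separating the compact set $\closednbhood{\samplevec}{\error}$ from this cone produces $\separator' \neq 0$ with $\sup_{z \in \bigcup_{\theta\geq0}\theta B_{\hvar}}\inprod{\separator'}{z} \leq \inf_{y \in \closednbhood{\samplevec}{\error}}\inprod{\separator'}{y}$; the left side, being a supremum over a cone, equals $0$, which is precisely $g_{\hvar}(\separator') \leq 0$ together with $f(\separator') > 0$, and this same $\separator'$ also settles the unbounded case of (i). The step I expect to be the crux, and the one needing the most care, is this last separation: the cone $\bigcup_{\theta\geq0}\theta B_{\hvar}$ need not be closed (e.g.\ when $\regulizer > 0$ and $\inpnorm{\linmap(\hvar)} = \regulizer$, so that $0$ lies on the boundary of $B_{\hvar}$), so one cannot merely invoke strict separation of a point from a closed cone; the argument should instead be driven by the family of genuinely strict separations at finite $\theta$ from part (i) together with a normalization and compactness passage to the limit, tracking which inequalities survive.
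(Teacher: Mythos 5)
Your route through part (i) is genuinely different from the paper's and, where it works, more elementary: the paper introduces the auxiliary function \( L(\dualvar,\hvar) \), shows it jumps from \( +\infty \) to \( 0 \) at \( \dualvar=\dualvar_{\hvar} \), and then invokes Slater's condition and Lagrangian strong duality for \eqref{eq:sup-problem-1}, whereas you sandwich the value directly; for part (ii) you also supply the equality argument that the paper dismisses as a ``straightforward exercise,'' and that portion of your proposal is correct. The gap is in your lower bound for (i). Strictly separating \( \closednbhood{\samplevec}{\error} \) from \( \theta\,\closednbhood{\linmap(\hvar)}{\regulizer} \) alone yields \( \theta g_{\hvar}(\separator_{\theta})<f(\separator_{\theta}) \) but controls neither sign: your parenthetical (``otherwise \( \separator_{\theta} \), rescaled, is feasible with arbitrarily large value'') tacitly assumes \( f(\separator_{\theta})>0 \), and a correctly oriented separator can have both quantities negative (both sets may sit inside \( \{\inprod{\separator_{\theta}}{\cdot}<0\} \)). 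The repair is to separate \( \closednbhood{\samplevec}{\error} \) from \( \bigcup_{s\in[0,\theta]}s\,\closednbhood{\linmap(\hvar)}{\regulizer}=\convhull\big(\theta\,\closednbhood{\linmap(\hvar)}{\regulizer}\cup\{0\}\big) \), which is still disjoint from \( \closednbhood{\samplevec}{\error} \) when \( \theta<\dualvar_{\hvar} \) because \( \inpnorm{\samplevec}>\error \); this forces \( f(\separator_{\theta})>\max\{0,\theta g_{\hvar}(\separator_{\theta})\} \), hence \( f(\separator_{\theta})>0 \), after which your rescaling argument goes through. This enlarged set is exactly the paper's \( S'(\dualvar) \), and using it also settles the case \( \dualvar_{\hvar}=+\infty \) of (i) without any appeal to (iii).

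For part (iii) you have correctly located the crux --- the cone \( \bigcup_{\theta\geq0}\theta\,\closednbhood{\linmap(\hvar)}{\regulizer} \) need not be closed --- but the limiting argument you sketch cannot close the ``only if'' direction: normalizing the separators and passing to a subsequential limit preserves \( g_{\hvar}(\separator')\leq0 \) but delivers only \( f(\separator')\geq0 \), and the strict inequality is genuinely lost. Concretely, take \( \hilbert=\R{2} \), \( \linmap \) the identity, \( \cost=\norm{\cdot}{2} \), \( \hvar=(0,1) \), \( \regulizer=1 \), \( \error=1 \), \( \samplevec=(5,-1) \): then \( \dualvar_{\hvar}=+\infty \) (the disks \( \closednbhood{(0,\theta)}{\theta} \) never meet \( \closednbhood{\samplevec}{\error} \)), yet the constraint \( \inprod{\separator'}{\linmap(\hvar)}+\regulizer\inpnorm{\separator'}\leq0 \) forces \( \separator'=(0,-t) \), for which \( \inprod{\separator'}{\samplevec}-\error\inpnorm{\separator'}=0 \), so no witnessing \( \separator' \) exists. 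The ``only if'' implication therefore fails in such tangential configurations; the paper's own proof of that direction asserts strict separation from a ``closed'' convex cone and has the same unaddressed defect, so this is not a deficiency of your proposal relative to the paper. Only the ``if'' direction of (iii), which you prove correctly, is actually used downstream (in the proof of Lemma \ref{lemma:sup-problem-2}).
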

\begin{proof}
Let the map \( L (\dualvar , \hvar) : [0 , +\infty[ \times \costatomset \longrightarrow [0 , +\infty[ \) be defined by
\begin{equation}
\label{eq:L-map_definition}
L (\dualvar , \hvar) \define
\begin{cases}
\begin{aligned}
& \sup_{\separator} && \Big( \inprod{\separator}{\samplevec} - \error \inpnorm{\separator} \Big) - \dualvar \Big( \inprod{\separator}{\linmap (\hvar)} + \regulizer \inpnorm{\separator} \Big) \\
& \sbjto && \inprod{\separator}{\samplevec} - \error \inpnorm{\separator} > 0 .
\end{aligned}
\end{cases}
\end{equation}
For every \( \dualvar \geq 0 \), let us define the set \( S'(\dualvar) \define \bigcup\limits_{\theta \in [0 , \dualvar]} \closednbhood{\linmap (\theta \hvar)}{\regulizer \theta} \). Clearly \( S'(\dualvar) \) is a convex-compact subset of \( \hilbert \) and monotonic, i.e., \( S'(\dualvar) \subset S'(\dualvar') \) for every \( \dualvar \leq \dualvar' \).

For every \( \hvar \in \costatomset \) and \( \theta \geq 0 \), we observe that \( \closednbhood{\linmap (\theta \hvar)}{(\theta \regulizer)} = \theta \cdot \closednbhood{\linmap (\hvar)}{\regulizer} \). Since the sets \( \closednbhood{\samplevec}{\error} \) and \( \closednbhood{\linmap (\hvar)}{\regulizer} \) are compact, the minimization over \( \theta \geq 0 \) in \eqref{eq:eta-h-definition} is achieved. Therefore, we have \( \closednbhood{\samplevec}{\error} \cap \closednbhood{\linmap ( \dualvar_{\hvar} \hvar) }{( \dualvar_{\hvar} \regulizer)} \neq \emptyset \). On the one hand, for \( 0 \leq \dualvar < \dualvar_{\hvar} \leq + \infty \), we know that the convex sets \( \closednbhood{\samplevec}{\error} \) and \( S'(\dualvar) \) do not intersect. Therefore, there exists a non-zero \( \separator' \in \hilbert \) such that the linear functional \( \inprod{\separator'}{\cdot} \) separates them. In other words, we have
\[
\min_{y \in \closednbhood{\samplevec}{\error}} \; \inprod{\separator'}{y} \ > \max_{z \in S'(\dualvar)} \; \inprod{\separator'}{z} .
\]
Observing the following equalities
\[
\begin{aligned}
\min_{y \in \closednbhood{\samplevec}{\error}} \; \inprod{\separator'}{y} & = \inprod{\separator'}{\samplevec} - \error \inpnorm{\separator'} ,  \quad \text{and} \\
\max_{z \in S'(\dualvar)} \; \inprod{\separator'}{z} & = \max \Big\{ 0, \max_{z \in \closednbhood{\linmap (\dualvar \hvar)}{\regulizer \dualvar}} \; \inprod{\separator'}{z} \Big\} \\
& = \max \Big\{ 0 , \; \dualvar \Big( \inprod{\separator'}{\linmap (\hvar)} + \regulizer \inpnorm{\separator'} \Big) \Big\} ,
\end{aligned}
\]
we get
\begin{equation}
\begin{aligned}
\inprod{\separator'}{\samplevec} - \error \inpnorm{\separator'} \ > \ \max \Big\{ 0 , \dualvar \Big( \inprod{\separator'}{\linmap (\hvar)} + \regulizer \inpnorm{\separator'} \Big) \Big\} .
\end{aligned}
\end{equation}
It follows at once that for every \( \alpha \geq 0 \), \( \separator'_{\alpha} \define \alpha \separator' \) is a feasible point in \eqref{eq:L-map_definition}, and thus, we have 
\[
\begin{aligned}
L(\dualvar , \hvar) \ & \geq \ \sup_{\alpha \geq 0} \quad \Big( \inprod{\separator'_{\alpha}}{\samplevec} - \error \inpnorm{\separator'_{\alpha}} \Big) - \dualvar \Big( \inprod{\separator'_{\alpha}}{\linmap (\hvar)} + \regulizer \inpnorm{\separator'_{\alpha}} \Big) \\
& = \ \Big( \inprod{\separator'}{\samplevec} - \error \inpnorm{\separator'} \Big) - \dualvar \Big( \inprod{\separator'}{\linmap (\hvar)} + \regulizer \inpnorm{\separator'} \Big) \ \Big( \sup_{\alpha \geq 0} \; \alpha \Big) \\
& = \ +\infty.
\end{aligned}
\]

On the other hand, for \( \dualvar_{\hvar} \leq \dualvar < +\infty \), we know that \( \closednbhood{\samplevec}{\error} \cap S'(\dualvar) \neq \emptyset \). Due to convexity, we know that for every \( \separator \in \hilbert \), we have
\[
\inprod{\separator}{\samplevec} - \error \inpnorm{\separator} = \min_{y \in \closednbhood{\samplevec}{\error}} \; \inprod{\separator'}{y} \ \leq \max_{z \in S'(\dualvar)} \; \inprod{\separator'}{z} = \max \Big\{ 0 , \dualvar \Big( \inprod{\separator}{\linmap (\hvar)} + \regulizer \inpnorm{\separator} \Big) \Big\} .
\]
Therefore, for every \( \separator \) such that \( \inprod{\separator}{\samplevec} - \error \inpnorm{\separator} > 0 \), we obtain that \( \inprod{\separator}{\linmap (\hvar)} + \regulizer \inpnorm{\separator} > 0 \) and
\[
\inprod{\separator}{\samplevec} - \error \inpnorm{\separator} \; \leq \;  \dualvar \Big( \inprod{\separator}{\linmap (\hvar)} + \regulizer \inpnorm{\separator} \Big) .
\]
By taking the supremum over all \( \separator \), we obtain \( L(\dualvar , \hvar) \leq 0 \). However, by picking any \( \separator \) such that \( \inprod{\separator}{\samplevec} - \error \inpnorm{\separator} > 0 \), and defining \( \separator_{\alpha} \define \alpha \separator \) for every \( \alpha > 0 \), we immediately see that \( \inprod{\separator_{\alpha}}{\samplevec} - \error \inpnorm{\separator_{\alpha}} > 0 \) and
\[
0 = \lim_{\alpha \to 0} \; \Big( \inprod{\separator_{\alpha}}{\samplevec} - \error \inpnorm{\separator_{\alpha}} \Big) - \dualvar \Big( \inprod{\separator_{\alpha}}{\linmap (\hvar)} + \regulizer \inpnorm{\separator_{\alpha}} \Big) .
\]
Therefore, \( L (\dualvar , \hvar) = 0 \). Summarizing, we have:\footnote{It is to be noted that if \( \dualvar_{\hvar} = +\infty \), then \( L(\dualvar , \hvar) = +\infty \) for every \( \dualvar \in [0 , +\infty[ \).}
\[
L (\dualvar , \hvar) =
\begin{cases}
\begin{aligned}
 +\infty & \quad \text{if } \; 0 \leq \dualvar < \dualvar_{\hvar} \\
 0 & \quad \text{if } \; \dualvar_{\hvar} \leq \dualvar < \infty .
\end{aligned}
\end{cases}
\]

Let us consider the Lagrange dual of \eqref{eq:sup-problem-1}, which is written in the following inf-sup formulation.
\begin{equation}
\label{eq:sup-problem-primal-dual}
\begin{cases}
\begin{aligned}
& \inf\limits_{\dualvar \; \geq \; 0} \; \sup_{\separator }  && \inprod{\separator}{\samplevec} - \error \inpnorm{\separator} - \dualvar \Big( \regulizer \inpnorm{\separator} + \inprod{\separator}{\linmap (\hvar)} \; - \; 1 \Big) \\
& \sbjto &&  \inprod{\separator}{\samplevec} - \error \inpnorm{\separator} >  0 .
\end{aligned}
\end{cases}
\end{equation}
Solving for the supremum over \( \separator \), the inf-sup problem \eqref{eq:sup-problem-primal-dual} reduces to \( \inf\limits_{\dualvar \geq 0} \; \dualvar + L (\dualvar , \hvar) \). It is immediate that the optimal value of the inf-sup problem \eqref{eq:sup-problem-primal-dual} is equal to \( \dualvar_{\hvar} \). 

We observe that the optimization problem \eqref{eq:sup-problem-1}, is a convex program. Moreover, since \( \inpnorm{\samplevec} > \error \), we see that \( \separator' \define \alpha \samplevec \) is a strictly feasible point in \eqref{eq:sup-problem-1} for every \( 0 < \alpha < \frac{1}{\inprod{\samplevec}{\linmap (\hvar)} + \regulizer \inpnorm{\samplevec}} \). Therefore, strong duality holds for the convex problem \eqref{eq:sup-problem-1}, and the optimal value of \eqref{eq:sup-problem-1} is indeed equal to \( \dualvar_{\hvar} \). This establishes the assertion (i) of the lemma.

Since \( \closednbhood{\linmap (\hvar)}{\regulizer} \subset \atomball \) we see that 
\[
\closednbhood{\linmap (\dualvar_{\hvar} \hvar)}{\regulizer \dualvar_{\hvar}} \; = \; \dualvar_{\hvar} \cdot  \closednbhood{\linmap (\hvar)}{\regulizer} \; \subset \; \dualvar_{\hvar} \cdot \atomball \; \subset \; \atomset{\dualvar_{\hvar}} .
\]
Combining this with the fact that \( \closednbhood{\samplevec}{\error} \cap \closednbhood{\linmap ( \dualvar_{\hvar} \hvar) }{( \dualvar_{\hvar} \regulizer)} \neq \emptyset \), we immediately infer that \( \closednbhood{\samplevec}{\error} \cap \atomset{\dualvar_{\hvar}} \neq \emptyset \). In view of Lemma \ref{lemma:coding-problem-reformulation-1}, we have \( \dualvar_{\hvar} \geq (\samplecost)^{1 / \horder} \). It is a straight forward exercise to verify that \( \dualvar_{\hvar'} = (\samplecost)^{1 / \horder} \) for some \( \hvar' \in \costatomset \), if and only if \( (\samplecost)^{1/\horder} \hvar' \in \codes \). This establishes assertion (ii) of the lemma.

If there exists a \( \separator' \in \hilbert \) such that the conditions \( \inprod{\separator'}{\samplevec} - \error \inpnorm{\separator'} > 0 \) and \( \inprod{\separator'}{\linmap (\hvar)} + \regulizer 
\inpnorm{\separator'} \leq 0 \) hold simultaneously. Then for every \( \alpha > 0 \), \( \separator_{\alpha} \define \alpha \separator' \) is a feasible point in \eqref{eq:sup-problem-1}. Therefore, we have
\[
\dualvar_{\hvar} \ \geq \ \sup_{\alpha > 0} \; \inprod{\separator_{\alpha}}{\samplevec} - \error \inpnorm{\separator_{\alpha}} \ = \ +\infty .
\]
Conversely let \( \dualvar_{\hvar} = +\infty \), then we know that the compact-convex set \( \closednbhood{\samplevec}{\error} \) does not intersect with the closed convex-cone \( S' \define \bigcup\limits_{\theta \in [0 , +\infty[} \closednbhood{\linmap (\theta \hvar)}{\regulizer \theta} \). Since one of the sets involved is compact, there exists a \( \separator' \in \hilbert \) such that the liear functional \( \inprod{\separator'}{\cdot} \) separates these sets \emph{strictly}. Thus, we have 
\[
\max_{z \in S'} \ \inprod{\separator'}{z} \ < \ \min_{y \in \closednbhood{\samplevec}{\error}} \inprod{\separator'}{y} \ = \ \inprod{\separator'}{\samplevec} - \error \inpnorm{\separator'} .
\]
We note that the quantity \( \inprod{\separator'}{\samplevec} - \error \inpnorm{\separator'} \) is a minimum of a linear functional over a compact set, and thus finite. On the contrary, \( \max\limits_{z \in S'} \ \inprod{\separator'}{z} \) is a maximum of the linear functional \( \separator' \) over the cone \( S' \). Therefore, it can be either \( 0 \) or \( +\infty \). However, since \( \inprod{\separator'}{\samplevec} - \error \inpnorm{\separator'} \) is an upper bound to this maximum, we have \( 0 \; = \;\max\limits_{z \in S'} \ \inprod{\separator'}{z} \). Therefore, we get \( \inprod{\separator'}{\samplevec} - \error \inpnorm{\separator'} > 0 \), and since \( \closednbhood{\linmap (\hvar)}{\regulizer} \subset S' \) we also have \( \inprod{\separator'}{\linmap (\hvar)} + \regulizer \inpnorm{\separator'} \leq 0 \). This completes the proof.
\end{proof}

\begin{lemma}
\label{lemma:sup-problem-2}
Let the linear map \( \linmap \), real numbers \( \error , \regulizer \geq 0 \), \( q \in ]0, 1] \), \( r > 0 \) and \( \samplevec \in \hilbert \setminus \closednbhood{\samplevec}{\error} \) be given. For every \( \hvar \in \costatomset \), let us consider the following optimization problem:
\begin{equation}
\label{eq:sup-problem-2}
\begin{cases}
\begin{aligned}
& \sup_{\separator } && r \Big( \inprod{\separator}{\samplevec} -  \error \inpnorm{\separator} \Big)^q - \Big( \regulizer \inpnorm{\separator} + \inprod{\separator}{\linmap (\hvar)} \Big) \\
& \sbjto &&  \inprod{\separator}{\samplevec} - \error \inpnorm{\separator} > 0 .
\end{aligned}
\end{cases}
\end{equation}
\begin{enumerate}[leftmargin = *, label = \rm{(\roman*)}]
    \item If \( q \in ]0,1[ \), the optimal value of \eqref{eq:sup-problem-2} is \( s(r, q) \; \dualvar_{\hvar}^\frac{q}{1 - q} \), where \( \dualvar_{\hvar} \) is as defined in \eqref{eq:eta-h-definition} and \( s(r, q) \) is some constant .\footnote{ \( s(r, q) \define \Big( (1 - q) (q^q r )^{\frac{1}{1 - q}} \Big) \; \) }
    
    \item If \( q = 1 \), the optimal value of \eqref{eq:sup-problem-2} is finite and equal to \( 0 \) if and only if \( \dualvar_{\hvar} \leq \frac{1}{r} \).
\end{enumerate}
\end{lemma}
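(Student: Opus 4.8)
The plan is to reduce \eqref{eq:sup-problem-2} to the problem already understood in Lemma \ref{lemma:sup-problem-1} by exploiting positive homogeneity in \( \separator \). Write \( a(\separator) \define \inprod{\separator}{\samplevec} - \error \inpnorm{\separator} \) and \( b(\separator) \define \regulizer \inpnorm{\separator} + \inprod{\separator}{\linmap (\hvar)} \); both are positively homogeneous of degree \( 1 \) in \( \separator \), so the objective of \eqref{eq:sup-problem-2} is \( r\,a(\separator)^q - b(\separator) \). From Lemma \ref{lemma:sup-problem-1}(i) the optimal value of \eqref{eq:sup-problem-1} is \( \dualvar_{\hvar} \), which, together with homogeneity, says exactly that \( a(\separator) \leq \dualvar_{\hvar}\, b(\separator) \) for every \( \separator \) with \( a(\separator) > 0 \) and \( b(\separator) > 0 \), that \( \dualvar_{\hvar} = \sup\{ a(\separator)/b(\separator) : a(\separator) > 0,\ b(\separator) > 0 \} \), and --- by Lemma \ref{lemma:sup-problem-1}(iii) --- that \( \dualvar_{\hvar} = +\infty \) precisely when there is a \( \separator' \) with \( a(\separator') > 0 \) and \( b(\separator') \leq 0 \).

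For assertion (i), fix \( \separator \) with \( a(\separator) > 0 \) and optimize along the ray \( t \mapsto t\separator \), \( t > 0 \): the objective becomes \( r t^q a(\separator)^q - t\, b(\separator) \). If \( b(\separator) \leq 0 \) this tends to \( +\infty \) as \( t \to +\infty \); if \( b(\separator) > 0 \), elementary one-variable calculus (the maximizer is \( t\opt = (rq)^{1/(1-q)} a(\separator)^{q/(1-q)} b(\separator)^{-1/(1-q)} \)) gives the maximum value \( s(r,q)\, \big(a(\separator)/b(\separator)\big)^{q/(1-q)} \) with \( s(r,q) = (1-q)(q^q r)^{1/(1-q)} \) --- this is exactly the routine algebra I will suppress. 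Taking the supremum over all admissible \( \separator \) and using \( q/(1-q) > 0 \) to pull the power outside the supremum yields that the value of \eqref{eq:sup-problem-2} equals \( s(r,q)\, \dualvar_{\hvar}^{q/(1-q)} \) when \( \dualvar_{\hvar} < +\infty \), and equals \( +\infty \) (consistent with the same formula under the usual convention) when \( \dualvar_{\hvar} = +\infty \), by the ray argument applied to the \( \separator' \) furnished by Lemma \ref{lemma:sup-problem-1}(iii).

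For assertion (ii), with \( q = 1 \) the objective \( r\,a(\separator) - b(\separator) \) is itself positively homogeneous of degree \( 1 \) in \( \separator \), so its supremum over the cone \( \{ \separator : a(\separator) > 0 \} \) is either \( 0 \) (attained in the limit \( \separator \to 0 \) along any admissible ray) or \( +\infty \), and it is finite exactly when \( r\,a(\separator) \leq b(\separator) \) for every \( \separator \) with \( a(\separator) > 0 \). By the description of \( \dualvar_{\hvar} \) recalled above, this last inequality holds for all such \( \separator \) if and only if \( \dualvar_{\hvar} \leq 1/r \) (in particular \( b(\separator) > 0 \) whenever \( a(\separator) > 0 \), ruling out the \( \dualvar_{\hvar} = +\infty \) branch), which is the claim.

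The main obstacle is the bookkeeping around the degenerate sign of \( b(\separator) \): one must ensure that the ray optimization, the interchange of the supremum over \( \separator \) with the supremum over \( t \), and the passage to \( \dualvar_{\hvar} \) are all valid simultaneously in the case \( b(\separator) \leq 0 \) for some admissible \( \separator \) (equivalently \( \dualvar_{\hvar} = +\infty \)), and that the formula \( s(r,q)\,\dualvar_{\hvar}^{q/(1-q)} \) is interpreted correctly there; everything else is the explicit one-variable calculus whose output I have quoted.
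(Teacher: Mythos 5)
Your proposal is correct and follows essentially the same route as the paper: both arguments scale out the degree-one homogeneity in \( \separator \) (the paper by normalizing \( \regulizer\inpnorm{\separator} + \inprod{\separator}{\linmap(\hvar)} = 1 \) and optimizing over the scale \( \alpha \), you by optimizing along rays \( t\separator \)), perform the same one-variable calculus yielding \( s(r,q) \), and then invoke Lemma \ref{lemma:sup-problem-1} to identify the remaining supremum of the ratio with \( \dualvar_{\hvar} \), treating the \( \dualvar_{\hvar} = +\infty \) case via part (iii) of that lemma. Your handling of the sign of \( b(\separator) \) and of the \( q=1 \) case matches the paper's in substance.
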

\begin{proof}
We begin by considering the case when \( \dualvar_{\hvar} < +\infty \). From the assertion (iii) of Lemma \ref{lemma:sup-problem-1}, it follows that \( \inprod{\separator}{\linmap (\hvar)} + \regulizer \inpnorm{\separator} > 0 \) for every \( \separator \in \hilbert \) satisfying \( \inprod{\separator}{\samplevec} - \error \inpnorm{\separator} > 0 \). Then, the optimization problem can be equivalently written as 
\[
\begin{cases}
\begin{aligned}
& \sup_{\separator , \; \alpha > 0 } && r \Big( \inprod{\separator}{\samplevec} -  \error \inpnorm{\separator} \Big)^q - \alpha \\
& \sbjto && 
\begin{cases}
\inprod{\separator}{\linmap (\hvar)} + \regulizer \inpnorm{\separator} = \alpha \\
\inprod{\separator}{\samplevec} - \error \inpnorm{\separator} > 0 .
\end{cases}
\end{aligned}
\end{cases}
\]
Redefining new variables \( \separator' \define \frac{1}{\alpha} \separator \), the above optimization problem is written as
\[
\begin{cases}
\begin{aligned}
& \sup_{\separator' , \; \alpha > 0 } &&  \alpha^q r \Big( \inprod{\separator'}{\samplevec} -  \error \inpnorm{\separator'} \Big)^q - \alpha \\
& \sbjto && 
\begin{cases}
\inprod{\separator'}{\linmap (\hvar)} + \regulizer \inpnorm{\separator'} = 1 \\
\inprod{\separator'}{\samplevec} - \error \inpnorm{\separator'} > 0 .
\end{cases}
\end{aligned}
\end{cases}
\]
By keeping a feasible \( \separator' \) fixed, one can explicitly optimize over \( \alpha > 0 \). In fact, for any \( r' > 0 \) we know that
\[
\sup\limits_{\alpha > 0} \ \big(r' \alpha^q - \alpha \big) \ = \ (r')^{ \frac{1}{1 - q} } q^{ \frac{q}{1 - q} } (1 - q) .
\]
Substituting \( r' = r \big( \inprod{\separator'}{\samplevec} -  \error \inpnorm{\separator'} \big)^q \), we see that \eqref{eq:sup-problem-2} simplifies to
\[
\begin{cases}
\begin{aligned}
& \sup_{\separator' } &&  s(r, q) \Big( \inprod{\separator'}{\samplevec} -  \error \inpnorm{\separator'} \Big)^{ \frac{q}{1 - q} } \\
& \sbjto && 
\begin{cases}
\inprod{\separator'}{\linmap (\hvar)} + \regulizer \inpnorm{\separator'} = 1 \\
\inprod{\separator'}{\samplevec} - \error \inpnorm{\separator'} > 0 .
\end{cases}
\end{aligned}
\end{cases}
\]
Since, \( \dualvar_{\hvar} < +\infty \), we know that \( \inprod{\separator'}{\linmap (\hvar)} + \regulizer \inpnorm{\separator'} > 0 \) for every \( \separator' \) satisfying \( \inprod{\separator'}{\samplevec} - \error \inpnorm{\separator'} > 0 \). Moreover, if \( \inprod{\separator'}{\linmap (\hvar)} + \regulizer \inpnorm{\separator'} < 1 \) also holds for \( \separator' \), we see that its scaled version \( \separator'' \define \frac{1}{\inprod{\separator'}{\linmap (\hvar)} + \regulizer \inpnorm{\separator'}} \separator' \), satisfies
\[
\begin{aligned}
& \inprod{\separator''}{\samplevec} - \error \inpnorm{\separator''} \; > \; \inprod{\separator'}{\samplevec} - \error \inpnorm{\separator'} \quad \text{and} \\
& \inprod{\separator''}{\linmap (\hvar)} + \regulizer \inpnorm{\separator''} = 1 .
\end{aligned}
\]
Therefore, the equality constraint \( \inprod{\separator'}{\linmap (\hvar)} + \regulizer \inpnorm{\separator'} = 1 \) can be relaxed to an inequality without changing the value of the supremum. Thus, we obtain the following problem equivalent to \eqref{eq:sup-problem-2}.
\[
\begin{cases}
\begin{aligned}
& \sup_{\separator' } &&  s(r, q) \Big( \inprod{\separator'}{\samplevec} -  \error \inpnorm{\separator'} \Big)^{ \frac{q}{1 - q} } \\
& \sbjto && 
\begin{cases}
\inprod{\separator'}{\linmap (\hvar)} + \regulizer \inpnorm{\separator'} \leq 1 \\
\inprod{\separator'}{\samplevec} - \error \inpnorm{\separator'} > 0 .
\end{cases}
\end{aligned}
\end{cases}
\]
Finally, we observe that \( [0 , \infty[ \ni (\cdot) \longmapsto (\cdot)^{\frac{q}{1 - q}} \in [0 , +\infty[ \) is an increasing function for every \( q \in ] 0 , 1 [ \). Then it follows at once from Lemma \ref{lemma:sup-problem-1} that the optimal value of \eqref{eq:sup-problem-2} is equal to \( s(r,q) (\dualvar_{\hvar})^{\frac{q}{1 - q}} \).

Finally if \( q = 1 \), employing similar technique as before, reduces the problem \eqref{eq:sup-problem-2} to  
\[
\begin{cases}
\begin{aligned}
& \sup_{\separator' , \; \alpha > 0 } &&  \alpha r \Big( \inprod{\separator'}{\samplevec} -  \error \inpnorm{\separator'} \; - \; \frac{1}{r} \Big) \\
& \sbjto && 
\begin{cases}
\inprod{\separator'}{\linmap (\hvar)} + \regulizer \inpnorm{\separator'} = 1 \\
\inprod{\separator'}{\samplevec} - \error \inpnorm{\separator'} > 0 ,
\end{cases}
\end{aligned}
\end{cases}
\]
which simplifies to: \( \ \sup\limits_{\alpha > 0} \ \alpha r \big( \dualvar_{\hvar} - \frac{1}{r} \big) \). It then follows at once that the optimal value of the sup problem \eqref{eq:sup-problem-2} is finite and equal to \( 0 \) if and only if \( \dualvar_{\hvar} \leq \frac{1}{r} \). The proof if now complete.
\end{proof}

\begin{proof}[\rm{\textbf{Theorem \ref{theorem:coding-problem-primal-dual}}}]
Solving for the supremum over \( \separator \) for every \( \hvar \in \costatomset \) in the min-sup problem \eqref{eq:coding-problem-primal-dual}, we deduce from Lemma \ref{lemma:sup-problem-2} that \eqref{eq:coding-problem-primal-dual} reduces to 
\[
\min_{\hvar \; \in \; \costatomset} \; s(r, q) \; \dualvar_{\hvar}^{ \frac{q}{1 - q} } .
\]
Since \( ]0 , +\infty[ \ni \dualvar \longmapsto \dualvar^{ \frac{q}{1 - q} } \) is an increasing function for every \( q \in ]0 , 1[ \),  in view of the assertion (ii) of Lemma \ref{lemma:sup-problem-1}, we conclude that the minimization over the variable \( \hvar \) is achieved at \( \hvar\opt \) such that \( \dualvar_{\hvar\opt} = (\samplecost)^{ 1 / \horder } \). Therefore, the optimal value of the min-sup problem \eqref{eq:coding-problem-primal-dual} is equal to \( s(r,q) \; (\samplecost)^{ \frac{q}{\horder (1 - q)} } \) and the set of minimizers is \( \frac{1}{(\samplecost)^{ 1 / \horder }} \codes \).
This establishes the assertions (i) and (ii)-(a) of the theorem.

\noindent \textsf{Necessary condition for \( (\hvar\opt , \separator\opt) \) to be a saddle point solution.} \newline
Suppose that \( (\hvar\opt , \separator\opt ) \in \costatomset \times \hilbert \) is a saddle point solution to the min-sup problem \eqref{eq:coding-problem-primal-dual}. Then necessarily, we have
\begin{equation*}
\hvar\opt \in \;
\argmin\limits_{\hvar \; \in \; \costatomset}
\begin{cases}
\begin{aligned}
& \sup_{\separator } \ \ &&  r \Big( \inprod{\separator}{\samplevec} -  \error \inpnorm{\separator} \Big)^q - \Big( \regulizer \inpnorm{\separator} + \inprod{\separator}{\linmap (\hvar)} \Big) \\
& \sbjto &&  \inprod{\separator}{\samplevec} - \error \inpnorm{\separator} > 0 ,
\end{aligned}
\end{cases}
\end{equation*}
which implies that \( \hvar\opt \in \frac{1}{(\samplecost)^{1/\horder}} \cdot \codes \). Moreover, we also have 
\begin{equation}
\separator\opt \in 
\begin{cases}
\begin{aligned}
& \argmax_{\separator } \ \ && \min\limits_{\hvar \in \costatomset}
\begin{cases}
r \Big( \inprod{\separator}{\samplevec} -  \error \inpnorm{\separator} \Big)^q - \Big( \regulizer \inpnorm{\separator} + \inprod{\separator}{\linmap (\hvar)} \Big)
\end{cases}
\\
& \sbjto &&  \inprod{\separator}{\samplevec} - \error \inpnorm{\separator} > 0 .
\end{aligned}
\end{cases}
\end{equation}
The minimization over \( \hvar \) can be solved explicitly, and simplifying using \eqref{eq:maximization-over-atomball-changed-to-hvar}, we have
\[
\separator\opt \in 
\begin{cases}
\begin{aligned}
& \argmax_{\separator } \ \ && 
r \Big( \inprod{\separator}{\samplevec} -  \error \inpnorm{\separator} \Big)^q - \dualnorm{\separator} \\
& \sbjto &&  \inprod{\separator}{\samplevec} - \error \inpnorm{\separator} \geq 0 .
\end{aligned}
\end{cases}
\]
By defining the new variables \( \alpha \define \dualnorm{\separator} \), and \( \separator' \define \frac{1}{\dualnorm{\separator}} \separator \), and writing the above optimization problem in terms of the variables \( (\separator' , \alpha) \), we obtain
\begin{equation}
\label{eq:optimal-separator-inclusion}
\Big( \frac{\separator\opt}{\dualnorm{\separator\opt}} , \; \dualnorm{\separator\opt} \Big) \; \in 
\begin{cases}
\begin{aligned}
& \argmax_{( \separator' , \; \alpha ) } && r \alpha^q \Big( \inprod{\separator'}{\samplevec} -  \error \inpnorm{\separator'} \Big)^q - \alpha \\
& \sbjto && 
\begin{cases}
\alpha > 0 , \\
\dualnorm{\separator'} = 1 , \\
\inprod{\separator'}{\samplevec} - \error \inpnorm{\separator'} > 0 .
\end{cases}
\end{aligned}
\end{cases}    
\end{equation}
Observe that for every feasible \( \separator' \), the optimization over the variable \( \alpha \) can be solved explicitly. Then from arguments similar to the ones provided in the proof of Lemma \ref{lemma:sup-problem-2}, we conclude that
\[
\begin{aligned}
\frac{1}{\dualnorm{\separator\opt}} \separator\opt & \in
\begin{cases}
\begin{aligned}
& \argmax_{\separator' } &&
\max_{\alpha > 0}
\begin{cases}
r \alpha^q \Big( \inprod{\separator'}{\samplevec} -  \error \inpnorm{\separator'} \Big)^q - \alpha
\end{cases}
\\
& \sbjto && 
\begin{cases}
\dualnorm{\separator'} = 1 \\
\inprod{\separator'}{\samplevec} - \error \inpnorm{\separator'} > 0 ,
\end{cases}
\end{aligned}
\end{cases}
\\
& =
\begin{cases}
\begin{aligned}
& \argmax_{\separator' } && s(r,q) \; \Big( \inprod{\separator'}{\samplevec} -  \error \inpnorm{\separator'} \Big)^{\frac{q}{1 - q}} \\
& \sbjto && 
\begin{cases}
\dualnorm{\separator'} = 1 \\
\inprod{\separator'}{\samplevec} - \error \inpnorm{\separator'} > 0 ,
\end{cases}
\end{aligned}
\end{cases}
\\
& =
\begin{cases}
\begin{aligned}
& \argmax_{\separator' } && \inprod{\separator'}{\samplevec} -  \error \inpnorm{\separator'} \\
& \sbjto && 
\begin{cases}
\dualnorm{\separator'} \leq 1 \\
\inprod{\separator'}{\samplevec} - \error \inpnorm{\separator'} > 0 ,
\end{cases}
\end{aligned}
\end{cases}
\\
& = \separatorset.\footnotemark
\end{aligned}
\footnotetext{Since \( \inpnorm{\samplevec} > \error \), we know that the optimal value achieved in \eqref{eq:coding-problem-equivalent-sup-problem} is positive. Therefore, adding the additional constraint \( \inprod{\separator'}{\samplevec} - \error \inpnorm{\separator'} > 0  \) neither changes the optimizer nor the optimal value. Then the last last equality follows from Theorem \ref{theorem:coding-problem-equivalent-sup-problem}.}
\]
Since every \( \separator \in \separatorset \) satisfies \( \inprod{\separator}{\samplevec} - \error \inpnorm{\separator} = \samplecost^{\frac{1}{\horder}}\), we conclude from \eqref{eq:optimal-separator-inclusion} that the following also holds.
\[
\begin{aligned}
\dualnorm{\separator\opt} & \in \; \argmax_{\alpha \; > \; 0} \Big\{ r \alpha^q \samplecost^{\frac{q}{\horder}} - \alpha
\\
& = ( r q )^{\frac{1}{1 - q}} \big( \samplecost \big)^{\frac{q}{\horder (1 - q)}} .
\end{aligned}
\]
Therefore, \( \separator\opt \in ( r q )^{\frac{1}{1 - q}} \big( \samplecost \big)^{\frac{q}{\horder (1 - q)}} \cdot \separatorset \), and the necessary conditions hold.

\noindent \textsf{Sufficient condition for \( (\hvar\opt , \separator\opt) \) to be a saddle point solution.} \newline
Since \( 0 < ( r q )^{\frac{1}{1 - q}} \big( \samplecost \big)^{\frac{q}{\horder (1 - q)}} \), we conclude from \eqref{eq:optimality-condition-of-hvar-and-separator}, that \( \inprod{\separator\opt}{\linmap (\hvar\opt) } = \max\limits_{\hvar \in \costatomset} \inprod{\separator\opt}{\linmap (\hvar)} \). Then it immediately follows that
\begin{equation}
\label{eq:saddle-point-condition-hvar}
\hvar\opt \; \in \; \argmin_{\hvar \in \costatomset} \ r \Big( \inprod{\separator\opt}{\samplevec} -  \error \inpnorm{\separator\opt} \Big)^q - \Big( \regulizer \inpnorm{\separator\opt} + \inprod{\separator\opt}{\linmap (\hvar)} \Big) . 
\end{equation}

From Lemma \ref{lemma:sup-problem-1}(ii), we note that \( \dualvar_{\hvar\opt} = \samplecost^{1/{\horder}} \). Therefore, from Lemma \ref{lemma:sup-problem-2} we have
\[
s(r,q) \samplecost^{ \frac{q}{\horder (1 - q)} } = 
\begin{cases}
\begin{aligned}
& \sup\limits_{\separator} &&  r \Big( \inprod{\separator}{\samplevec} -  \error \inpnorm{\separator} \Big)^q - \Big( \regulizer \inpnorm{\separator} + \inprod{\separator}{\linmap (\hvar\opt}) \Big) \\
& \sbjto &&  \inprod{\separator}{\samplevec} - \error \inpnorm{\separator} > 0 .
\end{aligned}
\end{cases}
\]
Moreover, from \eqref{eq:optimality-condition-of-hvar-and-separator} and the Definition \ref{def:optimal-separator-set} of the set \( \separatorset \), it is a straightforward exercise to verify that
\[
s(r,q) \samplecost^{ \frac{q}{\horder (1 - q)} } = r \Big( \inprod{\separator\opt}{\samplevec} -  \error \inpnorm{\separator\opt} \Big)^q - \Big( \regulizer \inpnorm{\separator\opt} + \inprod{\separator\opt}{\linmap (\hvar\opt)} \Big) .
\]
Since it is obvious from the Definition \ref{def:optimal-separator-set} that \( \inprod{\separator\opt}{\samplevec} - \error \inpnorm{\separator\opt} > 0 \), we get at once that
\begin{equation}
\label{eq:saddle-point-condition-separator}
\separator\opt \in \;
\begin{cases}
\begin{aligned}
& \argmax\limits_{\separator} &&  r \Big( \inprod{\separator}{\samplevec} -  \error \inpnorm{\separator} \Big)^q - \Big( \regulizer \inpnorm{\separator} + \inprod{\separator}{\linmap (\hvar\opt) } \Big) \\
& \sbjto &&  \inprod{\separator}{\samplevec} - \error \inpnorm{\separator} > 0 .
\end{aligned}
\end{cases}
\end{equation}
Collecting \eqref{eq:saddle-point-condition-hvar} and \eqref{eq:saddle-point-condition-separator}, we conclude that \( (\hvar\opt , \separator\opt) \) is indeed a saddle point solution to \eqref{eq:coding-problem-primal-dual}. The proof is now complete.
\end{proof}

\begin{proof}[\rm{\textbf{Proposition \ref{proposition:coding-problem-unconstrained-min-max}}}]
If \( \samplevec \) is not \( \Depsdelta \)-feasible, then we know that \( \regulizer = 0 \) and \( \closednbhood{\samplevec}{\error} \cap \image (\linmap) = \emptyset \). Consequently, there exists \( \hilbert \ni \separator' \perp \image (\linmap) \) such that \( \inprod{\separator'}{\samplevec} - \error \inpnorm{\separator'} > 0 \). Therefore, for every \( \alpha > 0 \), \( \separator_{\alpha} \define \alpha \separator' \) is a feasible point, and by considering arbitrarily large values of \( \alpha \) we see that
\[
+\infty = \; \sup_{\separator \; \in \; \hilbert} \
\begin{cases} \;
\cost(\repvec) + \big( \inprod{\separator}{\samplevec} - \error \inpnorm{\separator} \big) - \inprod{\separator}{\linmap (\repvec)} 
\end{cases}
\]
for every \( \repvec \in \R{\dsize} \). Observe that since \( \regulizer = 0 \), the constraint \( \inpnorm{\separator} \leq \frac{1}{\regulizer} \) in \eqref{eq:coding-problem-unconstrained-min-max} can be omitted. Thus, if \( \samplevec \) is not \( \Depsdelta \)-feasible, the optimal value of the min-max problem \eqref{eq:coding-problem-unconstrained-min-max} is \( +\infty \).

For every \( \repvec \in \R{\dsize} \) define
\[
\dualvar_{\repvec} \define
\sup_{\inpnorm{\separator} \; < \; \frac{1}{\regulizer}} \quad \cost(\repvec) \big( 1 - \regulizer \inpnorm{\separator} \big) + \big( \inprod{\separator}{\samplevec} - \error \inpnorm{\separator} \big) - \inprod{\separator}{\linmap (\repvec)} .
\]
To complete the proof of assertions (1) and (ii) of the proposition, we shall consider \( \samplevec \) to be \( \Depsdelta \)-feasible and establish that \( \samplecost \; = \; \min\limits_{\repvec \; \in \; \R{\dsize}} \; \dualvar_{\repvec} \), where the set of minimizers is \( \codes \). We begin by first showing that the inequality \( \samplecost \leq \dualvar_{\repvec} \) holds for every \( \repvec \in \R{\dsize} \). From the Cauchy-Schwartz inequality: \( \inpnorm{\samplevec - \linmap (\repvec)} = \max\limits_{\inpnorm{\separator} \leq 1} \ \inprod{\separator}{\samplevec - \linmap (\repvec)} \), we get
\begin{equation}
\label{eq:etaf-definition}
\dualvar_{\repvec} \; = \; \cost (\repvec) + \sup_{\alpha \; \in \;[0 , \frac{1}{\regulizer} [ } \quad \alpha \Big( \inpnorm{\samplevec - \linmap (\repvec)} - ( \error + \regulizer \cost (\repvec) ) \Big) .
\end{equation}

\noindent \textsf{Case 1}: If \( \regulizer = 0 \) and \( \inpnorm{\samplevec - \linmap (\repvec)} > \error  \). \newline
Since \( \regulizer = 0 \), \( \alpha \) is unconstrained in the maximization problem of \eqref{eq:etaf-definition}, and therefore, \( \dualvar_{\repvec} = +\infty \). Since \( \samplevec \) is \( \Depsdelta \)-feasible we have \( \samplecost < +\infty \), and the inequality \( \samplecost \leq \dualvar_{\repvec} \) follows.

\noindent \textsf{Case 2}: If \( \regulizer = 0\) and \( \inpnorm{\samplevec - \linmap (\repvec)} \leq \error \). \newline
It is immediate that
\[
\begin{aligned}
\dualvar_{\repvec} \; & =\;  \cost (\repvec) + \big( \inpnorm{\samplevec - \linmap (\repvec)} - \error \big) \; \inf_{\alpha \; \in \;[0 , \frac{1}{\regulizer} [ } \quad \alpha \\
& = \; \cost (\repvec) .
\end{aligned}
\]
Recall that the LIP \eqref{eq:coding-problem} reduces to \eqref{eq:coding-problem-absolute-error}, then the inequality \( \samplecost \leq \dualvar_{\repvec} \) follows immediately from the feasibility of \( \repvec \) in \eqref{eq:coding-problem-absolute-error}.

\noindent \textsf{Case 3}: If \( \regulizer > 0 \). \newline
It is easily verified that \( \dualvar_{\repvec} = \cost (\repvec) + \frac{1}{\regulizer} \max \big\{ 0 , \; \inpnorm{\samplevec - \linmap (\repvec)} - ( \error + \regulizer \cost (\repvec) ) \big\} \). Observe that \( \cost (\repvec) \leq \dualvar_{\repvec} \) follows trivially, and moreover,
\[
\begin{aligned}
\inpnorm{\samplevec - \linmap (\repvec)} & = ( \error + \regulizer \cost (\repvec) ) + \big( \inpnorm{\samplevec - \linmap (\repvec)} - ( \error + \regulizer \cost (\repvec) ) \big)  \\
& \leq ( \error + \regulizer \cost (\repvec) ) + \max \big\{ 0 , \; \inpnorm{\samplevec - \linmap (\repvec)} - ( \error + \regulizer \cost (\repvec) ) \big\} \\
& = ( \error + \regulizer \cost (\repvec) ) + \regulizer \big( \dualvar_{\repvec} - \cost (\repvec) \big) \\
& = \error + \regulizer \dualvar_{\repvec} .
\end{aligned}
\]
Therefore, the pair \( (\dualvar_{\repvec} , \repvec) \) is a feasible point in the LIP \eqref{eq:coding-problem}, and consequently, the inequality \( \samplecost \leq \dualvar_{\repvec} \) follows.

Let us consider \( \repvec\opt \in \codes \), to establish that the inequality \( \samplecost \; \leq \; \min\limits_{\repvec \; \in \; \R{\dsize}} \; \dualvar_{\repvec} \) is indeed satisfied with the equality, it suffices to show that \( \dualvar_{\repvec\opt} = \samplecost \). If \( \regulizer = 0 \), then indeed  \( \dualvar_{\repvec\opt} = \cost (\repvec\opt) = \samplecost \). If \( \regulizer > 0 \), we know that \( \dualvar_{\repvec\opt} \geq \cost(\repvec\opt) \), whereby we have
\begin{equation*}
\inpnorm{\samplevec - \linmap (\repvec\opt)} - (\error + \regulizer \cost (\repvec\opt)) \; \geq \; \inpnorm{\samplevec - \linmap (\repvec\opt)} - (\error + \regulizer \dualvar_{\repvec\opt}) \; \geq \; 0 .
\end{equation*}
Using \( \dualvar_{\repvec\opt} = \cost (\repvec\opt) + \frac{1}{\regulizer} \max \big\{ 0 , \; \inpnorm{\samplevec - \linmap (\repvec\opt)} - ( \error + \regulizer \cost (\repvec\opt) ) \big\} \) and simplifying, we get
\begin{equation}
\label{eq:eta-fopt}
\dualvar_{\repvec\opt} \; = \; \frac{1}{\regulizer} \big( \inpnorm{\samplevec - \linmap (\repvec\opt)} - \error \big) \; = \; \samplecost ,    
\end{equation}
where the last equality follows from the assertion (i) of Lemma \ref{lemma:uniqueness-of-intersection}. Furthermore, if there exists \( \repvec' \in \R{\dsize} \) such that \( \dualvar_{\repvec'} = \samplecost \), we know that the pair \( (\dualvar_{\repvec'} , \repvec') \) is a feasible point in the LIP \eqref{eq:coding-problem} it readily follows that \( \repvec' \in \codes \). This completes the proof of assertions (i) and (ii) of the proposition.

\noindent\textsf{Necessary condition for the pair \( (\repvec\opt , \separator\opt) \) to be a saddle point in \eqref{eq:coding-problem-unconstrained-min-max}}.\newline
\noindent The fact that \( \repvec\opt \in \codes \) follows at once from assertion (ii) of the proposition. To prove that \( \separator\opt \in \separatorset \), first we observe that
\begin{equation}
\label{eq:unconstrained-min-f}
\begin{aligned}
\min_{\repvec \in \R{\dsize}} \ \cost (\repvec) \big( 1 - \regulizer \inpnorm{\separator} \big) - \inprod{\separator}{\linmap (\repvec)} \; & = \; \min_{\dualvar \geq 0,\; \hvar \in \costatomset} \dualvar \big( 1 - \regulizer \inpnorm{\separator} \big) - \inprod{\separator}{\linmap (\dualvar \hvar)} \\
\; & = \; \min_{\dualvar \geq 0} \ \dualvar \big( 1 - \dualnorm{\separator} \big) \\
\; & = \;
\begin{cases}
\begin{aligned}
& -\infty && \text{if } \dualnorm{\separator} > 1, \\
& \quad 0 && \text{if } \dualnorm{\separator} \leq 1 .
\end{aligned}
\end{cases}
\end{aligned}
\end{equation}
Therefore, if \( (\repvec\opt , \separator\opt) \) is a saddle point in \eqref{eq:coding-problem-unconstrained-min-max}, we have
\[
\begin{aligned}
\separator\opt 
& \; \in \argmax_{\inpnorm{\separator} < \frac{1}{\regulizer}} \; \begin{cases}
\min\limits_{\repvec \in \R{\dsize}} \ \cost (\repvec) \big( 1 - \regulizer \inpnorm{\separator} \big) + \big( \inprod{\separator}{\samplevec} - \error \inpnorm{\separator} \big) - \inprod{\separator}{\linmap (\repvec)}
\end{cases} \\
& \in \; \argmax_{\inpnorm{\separator} < \frac{1}{\regulizer}, \; \dualnorm{\separator} \leq 1 } \quad \inprod{\separator}{\samplevec} - \error \inpnorm{\separator} \\
& \in \; \argmax_{\dualnorm{\separator} \leq 1} \quad \inprod{\separator}{\samplevec} - \error \inpnorm{\separator} \quad  \text { because, } \regulizer \inpnorm{\separator} \leq \dualnorm{\separator}  \\
& \in \; \separatorset .
\end{aligned}
\]

\noindent\textsf{Sufficient condition for the pair \( (\repvec\opt , \separator\opt) \) to be a saddle point in \eqref{eq:coding-problem-unconstrained-min-max}}.\newline
\noindent Let \( \separator\opt \in \separatorset \) and \( \repvec\opt \in \codes \). From Definition \ref{def:optimal-separator-set}, we see that \( \dualnorm{\separator\opt} = 1 \) and \( \big( 1 - \regulizer \inpnorm{\separator\opt} \big) \geq 0 \). Therefore, from \eqref{eq:unconstrained-min-f}
\[
\begin{aligned}
0 \; & = \; \min_{\repvec \in \R{\dsize}} \quad \cost (\repvec) \big( 1 - \regulizer \inpnorm{\separator\opt} \big) - \inprod{\separator\opt}{\linmap (\repvec)} \\
\; & \leq \; \cost (\repvec\opt) \big( 1 - \regulizer \inpnorm{\separator\opt} \big) - \inprod{\separator\opt}{\linmap (\repvec\opt)} \\
\; & \leq \; \samplecost \big( 1 - \regulizer \inpnorm{\separator\opt} - \inprod{\separator\opt}{\linmap (\hvar\opt)} \big) \\
\; & = \; 0,
\end{aligned}
\]
where \( \hvar\opt \define \frac{1}{\samplecost} \repvec\opt \), and the last equality follows from \eqref{eq:optimality-condition-of-hvar-and-separator}. Therefore, all the inequalities are satisfied with equality, and we have \( 0 \; = \; \cost (\repvec\opt) \big( 1 - \regulizer \inpnorm{\separator\opt} \big) - \inprod{\separator\opt}{\linmap (\repvec\opt)} \). Moreover, it also immediately implies that
\begin{equation}
\label{eq:unconstrained-suff-f}
\repvec\opt \in \; \argmin_{\repvec \in \R{\dsize}} \quad \cost (\repvec) \big( 1 - \regulizer \inpnorm{\separator\opt} \big) + \big( \inprod{\separator\opt}{\samplevec} - \error \inpnorm{\separator\opt} \big) - \inprod{\separator\opt}{\linmap (\repvec)} .
\end{equation}

From Definition \ref{def:optimal-separator-set} we know that \( \inprod{\separator\opt}{\samplevec} - \error \inpnorm{\separator\opt} = \samplecost \) and \( \regulizer \inpnorm{\separator\opt} \; \leq \; 1 - \max\limits_{\hvar \in \costatomset} \ \inprod{\separator\opt}{\linmap (\hvar)} \; \leq \; 1 \). Moreover, from the fact that \( 0 = \cost (\repvec) \big( 1 - \regulizer \inpnorm{\separator\opt} \big) - \inprod{\separator\opt}{\linmap (\repvec)} \) we have
\[
\Big( \cost (\repvec) \big( 1 - \regulizer \inpnorm{\separator\opt} \big) - \inprod{\separator\opt}{\linmap (\repvec)} \Big) \; + \; \Big( \inprod{\separator\opt}{\samplevec} - \error \inpnorm{\separator\opt} \Big) \; = \; 0 + \samplecost .
\]
Recalling from \eqref{eq:eta-fopt} that \( \dualvar_{\repvec\opt} = \samplecost \), we immediately get
\begin{equation}
\label{eq:unconstrained-suff-lambda}
\separator\opt \; \in \; \argmax_{\inpnorm{\separator} \; \leq \; \frac{1}{\regulizer}} \quad \cost (\repvec\opt) \big( 1 - \regulizer \inpnorm{\separator} \big) + \big( \inprod{\separator}{\samplevec} - \error \inpnorm{\separator} \big) - \inprod{\separator}{\linmap (\repvec\opt)} .
\end{equation}
Collecting \eqref{eq:unconstrained-suff-f} and \eqref{eq:unconstrained-suff-lambda}, we conclude that \( (\repvec\opt , \separator\opt ) \in \codes \times \separatorset \) is indeed a saddle point solution to the min-max problem \eqref{eq:coding-problem-unconstrained-min-max}, and the proof is now complete.
\end{proof}

\section{Conclusion}
In this article, we have proposed a slightly generalised formulation of the error constrained linear inverse problem and provide an exposition to its underlying convex geometry. Novel convex-concave min-max problems have been proposed and their equivalence to the LIP is mathematically established. These equivalent reformulations are crucial in overcoming the ill-posedness of the error constrained dictionary learning problem. Furthermore, complete characterization of the saddle points of the min-max problems is also provided in terms of a solution to the LIP, and vice versa. Consequently, a solution to the LIP can be computed by applying saddle point seeking methods to its equivalent min-max problems, which gives rise to simple algorithms to solve linear inverse problems and problems alike. Of course, the intent of this article is to only show that the min-max forms can also be used to obtain algorithms for an LIP, comparison of the resulting algorithms with the existing methods needs a separate and thorough investigation of its own, and will be reported in subsequent articles.

\vskip 0.2in
\bibliographystyle{plain}
\bibliography{ref}

\end{document}